\documentclass{amsart}%
\usepackage{amssymb}
\usepackage{amsfonts}
\usepackage{amsmath}
\usepackage{verbatim}
\usepackage{mathtools}
\usepackage{graphicx}%

\newtheorem{theorem}{Theorem}
\theoremstyle{plain}

\newtheorem{conjecture}{Conjecture}

\newtheorem{definition}{Definition}

\newtheorem{lemma}{Lemma}

\newtheorem{remark}{Remark}

\numberwithin{equation}{section}

\begin{document}
\title[ ]{Gap Phenomenon for Yamabe Type Problems of $M^m\times T^{n-m}$}
\author{Fang Wang}
\address{School of Mathematical Sciences, Shanghai Jiao Tong University, Shanghai 200240,}
\email{fangwang1984@sjtu.edu.cn}
\author{Zhixin Wang}
\address{School of Mathematical Sciences, Shanghai Jiao Tong University, Shanghai 200240,}
\email{jhin@sjtu.edu.cn}

\maketitle

\begin{abstract}
We prove that if $(M^m, h)$ is a Yamabe metric, then the product metric $h + g_{\mathrm{flat}}$ on $M^m \times T^{n-m}$ is also a Yamabe metric whenever the flat torus $T^{n-m}$ is sufficiently small. This generalizes earlier results for $S^1 \times S^{n-1}$. Our method extends to the study of Type~I and Type~II Yamabe constants, $Q$-curvature problems, and an isoperimetric-ratio type problem.
\end{abstract}

\section{Introduction}

For a general $(M^m,h)$ without boundary, its Yamabe constant $Y(M,h)$ and Yamabe invariant $\sigma(M)$ are defined by the
\begin{equation}
\begin{aligned}
    E_{(M,h)}(u)=&\frac{\int_M c_m|\nabla_M u|^2+Ru^2}{(\int_{M} u^{\frac{2m}{m-2}})^{\frac{m-2}{m}}}\\
    Y(M,[h])=&\inf\limits_{u\in H^1(M),u\neq 0} E_{(M,h)}(u)\\
    \sigma(M)=&\sup\limits_{h}Y(M,[h])
    \label{0-Yamabe-func}
\end{aligned}
\end{equation}
where $R$ is the scalar curvature of $g$ and $c_m=\frac{4(m-1)}{m-2}$. The Euler-Lagragian equation for $Y(M,h)$ is
\begin{equation}
    -\frac{4(m-1)}{m-2}\Delta u+Ru=Cu^{\frac{m+2}{m-2}}\label{0-Yamabe_Lag}
\end{equation}
and the solution to the above equation gives us constant scalar curvature metric $u^{4/(n-2)}h$. $(M,h)$ is called Yamabe metric if $u\equiv 1$ mimimizes $Y(M,[h])$. When no confusion arises, we write \(Y(M)\) or \(Y(h)\) for \(Y(M,[h])\).

Through the combined efforts of H. Yamabe \cite{Yamabe}, N. Trudinger \cite{Trudinger} and T. Aubin \cite{Aubin}, it was proved $Y(M,h)\leq Y(S^m,h_{std})$, and the infimum $Y(M,h)$ can be achieved if $Y(M,h)<Y(S^m,h_{std})$ where $h_{std}$ is the standard round metric. And this final step was solved by R. Schoen \cite{Schoen}.

 When $\sigma(M)$ is attained, the critical point corresponds to a metric of constant Ricci curvature. However, $\sigma(M)$ cannot always be attained. The most famous example is from \cite{Ko85-1}\cite{Ko85-2}\cite{Schoen2}. In \cite{CGS}, it was shown using moving place method that the only solution to (\ref{0-Yamabe_Lag}) with one singularity in $\mathbb{R}^n$ is radial. Using conformal change, it was shown that on $S^{n-1}\times S^1(T)$ any solution to (\ref{0-Yamabe_Lag}) only depends on $S^1(T)$, and in particular is constant when $T$ is small.

In this paper, we are going to generalize this result to a much broader class. Let $\Lambda=\{\vec{v}_i\}_{i=1}^{n-m}$ be a lattice in $\mathbb{R}^{n-m}$ and $T(\Lambda)$ be the torus generated by it with flat metric $g_\Lambda$. Let $(I^{n-m},dx^2)$ be the unit cube with standard flat metric, then we have the natural isometry:

\begin{equation}
\begin{aligned}
            \Phi_\Lambda&\coloneqq [v_1,\cdots,v_{n-m}]:\quad I^{n-m} \rightarrow T(\Lambda) \\
\Phi_{\Lambda}^*(g_\Lambda)_{ij}&=v_i\cdot v_j\label{isometry}
\end{aligned}
\end{equation}
For simplicity, we don't distinguish $\Phi_{\Lambda}^*(g_\Lambda)$ with $g_\Lambda$. Besides, for two matrixs $A,B$, we say $A\geq B$ if $A(\vec{v},\vec{v})\geq B(\vec{v},\vec{v})$ for all vectors $\vec{v}$. Let $(N(\Lambda),g(\Lambda))=(M\times T(\Lambda),h+g_{\Lambda})$, and $E_{\Lambda},Y(\Lambda)$ be the corresponding quantities as in (\ref{0-Yamabe-func}). And let $\frac{1}{l}\Lambda$ be the lattice generated by $\{\frac{1}{l}\vec{v}_i\}_{i=1}^{n-m}$.
\begin{theorem}\label{thm_Yamabe}
    Suppose $(M^m,h)$ is Yamabe metric with $m\geq 2$ and $Y(M,[h])> 0$. Then there exists a constant $C$ depending only on $(M,h)$ so that when $g_{\Lambda}<C$, then $Y(\Lambda)$ is achieved by constant.
\end{theorem}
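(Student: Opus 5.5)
Since the product metric $g(\Lambda)=h+g_\Lambda$ already has constant scalar curvature $R_h$, the plan is a contradiction-plus-compactness argument. Suppose the statement fails. Then there is a sequence of lattices $\Lambda_k$ with $g_{\Lambda_k}\to 0$ such that $Y(\Lambda_k)$ is strictly smaller than the energy of constants,
\[
E_{\Lambda_k}(1)=R_h\,\mathrm{Vol}(N(\Lambda_k))^{2/n}.
\]
This quantity tends to $0$, so $Y(\Lambda_k)<Y(S^n)$ and by the Yamabe--Trudinger--Aubin--Schoen theory stated in the introduction $Y(\Lambda_k)$ is attained by a positive minimizer $u_k$. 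We normalize so that
\[
-c_n\Delta u_k+R_h u_k=\lambda_k u_k^{\frac{n+2}{n-2}}.
\]
Pulling back by $\Phi_{\Lambda_k}$ to $M\times I^{n-m}$ with periodic boundary conditions, we split $u_k=\bar u_k+w_k$. Here $\bar u_k(x)$ is the average over the torus fiber, and $w_k$ has zero fiber average.

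\textbf{Step 1 (fiber direction).} The first nonzero eigenvalue of the flat torus $T(\Lambda_k)$ is at least $c/\|g_{\Lambda_k}\|\to\infty$. Multiply the equation by $w_k$ and integrate. The torus gradient term then controls $\int|\nabla_T w_k|^2\ge \mu_1(\Lambda_k)\int w_k^2$. The nonlinear term $\int\lambda_k u_k^{p}w_k$, with $p=\frac{n+2}{n-2}$, can be written as
\[
\int \lambda_k\,\bigl(u_k^{p}-\bar u_k^{p}\bigr)\,w_k \le C\lambda_k\int \max(u_k,\bar u_k)^{p-1}w_k^2 .
\]
To control this we need an $L^\infty$ bound on $\lambda_k u_k^{p-1}$. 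I would first obtain it by a blow-up argument. Rescale by the torus size, so that the torus becomes unit size and $M$ becomes huge and nearly flat. A blow-up of $u_k$ would then produce either a bubble on $\mathbb R^n$ or a positive solution on $\mathbb R^m\times T^{n-m}$. In either case the energy would be bounded below by a positive constant, contradicting $Y(\Lambda_k)\to 0$; more precisely, minimality gives $Y(\Lambda_k)\le E_{\Lambda_k}(1)\to 0$. The resulting bound $\lambda_k u_k^{p-1}\le C$ is independent of $k$. Since $\mu_1\to\infty$, we get $w_k\equiv0$ for large $k$. Hence $u_k=u_k(x)$ depends only on $M$.

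\textbf{Step 2 (reduction to $M$).} For a function on $M$ alone, the energy takes the form
\[
E_{\Lambda}(u)=\mathrm{Vol}(T)^{2/n}\,\frac{\int_M c_n|\nabla u|^2+R_hu^2}{\bigl(\int_M u^{\frac{2n}{n-2}}\bigr)^{\frac{n-2}{n}}}.
\]
We must show constants minimize this functional on $(M,h)$. This is a subcritical-exponent problem ($2n/(n-2)<2m/(m-2)$), with coefficient $c_n<c_m$. I would use the hypothesis that $h$ is Yamabe: $u\equiv1$ minimizes $E_{(M,h)}$, and this carries the spectral information $c_m\lambda_1(M)\ge \frac{4}{m-2}R_h$, i.e.\ $\lambda_1(-\Delta_h)\ge R_h/(m-1)$, from the second variation at $1$. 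Hölder's inequality alone is not enough. Instead I would compare $E_\Lambda$ restricted to $M$-functions with $E_{(M,h)}$ by writing $u=v^{a}$ with $a=\frac{(n-2)m}{n(m-2)}$, which matches $u^{2n/(n-2)}=v^{2m/(m-2)}$, and applying Yamabe minimality of $h$ to $v$. Alternatively, I would invoke the Bidaut-Véron--Véron / Obata type rigidity: for positive solutions of the subcritical equation, a lower bound on $\lambda_1$ or on Ricci forces constancy. The Yamabe assumption gives precisely the needed $\lambda_1$ bound, with room to spare since $c_n<c_m$.

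The main obstacle I expect is Step 2, that is, showing constancy of $M$-only solutions using only that $h$ is a Yamabe metric, without any Ricci lower bound. I would first try the power substitution combined with the Yamabe inequality on $M$. Step 1's uniform $L^\infty$ bound is the second delicate point.
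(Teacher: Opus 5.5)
As you suspected, Step 2 is a genuine gap, and it is the heart of the theorem. After Step 1 you are left with a statement on $(M,h)$ alone, independent of $\Lambda$. Write $q_0=\frac{2m}{m-2}$, $q_1=\frac{2n}{n-2}$, and take all integrals and norms with respect to the probability measure $dv_h/\mathrm{Vol}(M,h)$. You then need
\[
c_n\int_M|\nabla u|^2\;\ge\;R_h\bigl(\|u\|_{q_1}^2-\|u\|_2^2\bigr)\qquad\text{for all }u\in H^1(M),
\]
which is exactly ``constants minimize $\tilde E$''. This is the only place the Yamabe hypothesis on $h$ can enter, and neither of your routes delivers it.

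\textbf{Rigidity route.} Bidaut-V\'eron--V\'eron rigidity is driven by a lower Ricci bound through Bochner's formula, and a Yamabe metric need not have one. The bound $\lambda_1(M)\ge R_h/(m-1)$ is only the second-variation condition at $u\equiv1$, so it carries local, not global, information.

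\textbf{Substitution route.} In $u=v^a$ one has $a=q_0/q_1>1$. The direct chain $\|u\|_{q_1}^2=\|v\|_{q_0}^{2a}\le\bigl(\frac{c_m}{R_h}\int|\nabla v|^2+\int v^2\bigr)^a$ cannot be closed against $\frac{c_n}{R_h}\int|\nabla u|^2+\int u^2$. For $v=2+\sin(k\psi)$ the left side grows like $k^{2a}$ while the right side grows like $k^2$.

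\textbf{Missing idea.} What you need is the paper's Lemma \ref{3-s}, whose core is the convexity inequality (\ref{s_derivative_2}). In the form you need it: for $u\ge0$, the function $q\mapsto\frac{q}{q-2}\bigl(\|u\|_q^2-\|u\|_2^2\bigr)$ is non-decreasing. Indeed, its derivative at $q$ is $\frac{2\|u\|_q^2}{(q-2)^2}\bigl(\int w^2+(q-2)\int w^q\log w-1\bigr)$ with $w=u/\|u\|_q$, and this is $\ge0$ by convexity of $p\mapsto\int w^p$. Since $\frac{q_0}{q_0-2}=\frac m2$ and $\frac{q_1}{q_1-2}=\frac n2$, the Yamabe property $c_m\int|\nabla u|^2\ge R_h(\|u\|_{q_0}^2-\|u\|_2^2)$ gives $c_m\int|\nabla u|^2\ge\frac nm R_h(\|u\|_{q_1}^2-\|u\|_2^2)$. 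Then $nc_n\ge mc_m$ (for $n>m\ge3$) finishes. The gain factor $n/m$ is what beats $c_n<c_m$, which is why H\"older alone fails.

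Step 1 also needs repair. For an arbitrary sequence $\Lambda_k$ the tori may be lopsided. Rescaling at the scale $\mu_1(\Lambda_k)^{-1/2}$ then collapses the short fiber directions. The local energy near the maximum point picks up a vanishing volume factor, and your energy contradiction disappears. You need a Liouville theorem there (Gidas--Spruck: $\frac{n+2}{n-2}$ is subcritical in every dimension below $n$). Alternatively, first reduce to cubic lattices $\epsilon I$ using the paper's monotonicity of $E_\Lambda(u)/V(T(\Lambda))^{2/n}$ in $g_\Lambda$, see (\ref{3-E-comparison}).

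Also, a blow-up at the torus scale yields only $\sup\lambda_k u_k^{p-1}=o(\mu_1(\Lambda_k))$, not a $k$-independent bound. That weaker bound is, however, all your spectral-gap estimate uses.

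With these repairs and Step 2 supplied, your route is a genuinely different proof from the paper's. You obtain exact torus-independence of minimizers via the fiber spectral gap. This replaces the paper's concentration--compactness limit along $2^{-k}\Lambda_0$ and its local stability of constants (Lemmas \ref{lemma_eigen} and \ref{lemma-stability}).
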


The motivation comes from the example \(M \times S^1(T)\). For small \(T\), the constant function \(u \equiv 1\) is a stable critical point of \(E_{g(T)}\). Therefore, it suffices to show that the minimizer \(u_T\) of \(E_{g(T)}\) satisfies \(\lim_{T \to 0} u_T = 1\).

Intuitively, as \(T \to 0\), the manifold \(M \times S^1(T)\) collapses to \(M\), and the minimization problem on \(M \times S^1(T)\) degenerates to a minimization problem on \((M,h)\). Indeed, using the isometry (\ref{isometry}), the functional (\ref{0-Yamabe-func}) can be rewritten as
\begin{equation}
\begin{aligned}
E_{g(T)}(u) 
&= \frac{\displaystyle\int_{N(1)}\! \left[c_n\frac{1}{T^{2-2/n}}(\partial_t u)^2 + T^{2/n}\bigl(c_n|\nabla_M u|^2 + R_h u^2\bigr)\right]}
      {\left(\displaystyle\int_{N(1)} u^{2n/(n-2)}\right)^{(n-2)/n}} \\
&\eqqcolon \frac{1}{T^{2-2/n}}\,A(u) \;+\; T^{2/n}\,B(u),\label{isometry-Yamabe}
\end{aligned}
\end{equation}

Taking the limit \(T \to 0\) and using the bound \(Y(g(T)) \le Y(S^n)\), we find that \(\int_{N(1)} (\partial_t u_T)^2 \to 0\); consequently, \(u_T\) converges to a limit function \(u_0\) defined on \(M\). The minimizing property of \(u_T\) forces \(u_0\) to be a minimizer of the reduced functional
\[
\tilde{E}(u)=\frac{\displaystyle\int_M\bigl(c_n|\nabla_M u|^2 + R_h u^2\bigr)}
                 {\left(\displaystyle\int_M u^{2n/(n-2)}\right)^{(n-2)/n}}.
\]

Recall that \(u \equiv 1\) minimizes the standard Yamabe functional \(E_{(M,h)}\) on \(M\). Because \(2n/(n-2) < 2m/(m-2)\), the Sobolev embedding hints that \(\tilde{E}\) might also be minimized by the constant function \(u \equiv 1\); establishing this would complete the argument.\\

Our strategy generalize direct to manifold with boundary. If $\partial N\neq \emptyset$, we can similarly define Type I and Type II Yamabe constant and calculates its Euler-Lagrangian equations\\

Type I:
\begin{equation}
\begin{aligned}
    E^I_{(N,g)}(u)=&\frac{\int_N (\frac{4(n-1)}{n-2}|\nabla_N u|^2+Ru^2)+\int_{\partial N}2H}{(\int_{N} u^{\frac{2n}{n-2}})^{\frac{n-2}{n}}}\\
    Y(N,[g])=&\inf\limits_{u\in H^1(N),u\neq 0} E^I_{(N,g)}(u)  \label{0-TypeI-func} 
\end{aligned}
\end{equation}

Type II:
\begin{equation}
\begin{aligned}
    E^{II}_{(N,g)}(u)=&\frac{\int_N (\frac{4(n-1)}{n-2}|\nabla_N u|^2+Ru^2)+\int_{\partial N}2H}{(\int_{\partial N} u^{\frac{2(n-1)}{n-2}})^{\frac{n-2}{n-1}}}\\
    Q(N,[g])=&\inf\limits_{u\in H^1(N),u\neq 0} E^{II}_{(N,g)}(u)    \label{0-TypeII-func}
\end{aligned}
\end{equation}
And their Euler-Lagrangian equations are:\\
Type I:
\begin{equation}
    \begin{aligned}
    -\frac{4(n-1)}{n-2}\Delta u+Ru &=Cu^{\frac{n+2}{n-2}} &\text{ on} \quad N\\
    \frac{\partial u}{\partial \nu}+\frac{n-2}{2(n-1)}Hu&=0 &\text{on} \quad \partial N\label{0-TypeI-Lag}
    \end{aligned}
\end{equation}
Type II:
\begin{equation}
    \begin{aligned}
    -\frac{4(n-1)}{n-2}\Delta u+Ru &=0 &\text{ on} \quad N\\
    \frac{\partial u}{\partial \nu}+\frac{n-2}{2(n-1)}Hu&=Cu^{\frac{n}{n-2}} &\text{on} \quad \partial N\label{0-TypeII-Lag}
    \end{aligned}
\end{equation}

A minimizer for the Type I Yamabe constant yields a metric with zero scalar curvature and constant mean curvature, whereas a minimizer for the Type II Yamabe constant yields a metric of constant scalar curvature and zero mean curvature. The definition of $N(\Lambda),g(\Lambda),E_{\Lambda}, Y(\Lambda),Q(\Lambda)$ generalizes in an apparent way.
\begin{theorem}\label{thm_I II}
    Suppose $h$ minimizes Type I (or II) Yamabe constant on $(M,h)$, then there exists a constant $C$ depending only on $(M,h)$ so that when $g_\Lambda<C$, then $g(\Lambda)$ minimizes Type I (Type II) Yamabe constant of $(N(\Lambda),[g(\Lambda)])$.
\end{theorem}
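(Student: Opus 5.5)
We follow the same scheme as for Theorem~\ref{thm_Yamabe}: a collapse argument combined with local stability of the constant function. Here $N(\Lambda)=M\times T(\Lambda)$ and $\partial N(\Lambda)=\partial M\times T(\Lambda)$. Since $T(\Lambda)$ is flat and totally geodesic, $R_{g(\Lambda)}=R_h$ and $H_{g(\Lambda)}=H_h$, so $u\equiv 1$ solves (\ref{0-TypeI-Lag}) (resp.\ (\ref{0-TypeII-Lag})) on $N(\Lambda)$ whenever it does on $M$.

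The first step is to pull back by $\Phi_\Lambda$ to the fixed manifold $M\times I^{n-m}$ with periodic conditions, as in (\ref{isometry-Yamabe}). Writing $|\Lambda|$ for the volume of $T(\Lambda)$, the Type I functional becomes
\[
E^I_\Lambda(u)=|\Lambda|^{2/n}\,\frac{\int c_n\bigl(|\nabla_M u|^2+g_\Lambda^{ij}\partial_iu\,\partial_ju\bigr)+R_hu^2+\int_{\partial}2H_hu^2}{\bigl(\int u^{2n/(n-2)}\bigr)^{(n-2)/n}},
\]
and analogously for Type II, with the prefactor $|\Lambda|^{1/(n-1)}$ and the boundary norm in the denominator. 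When $g_\Lambda$ is small, $g_\Lambda^{-1}$ is large, so the fiber Dirichlet energy is heavily penalized.

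The second step, and the main one, is an a priori estimate for minimizers. Suppose for contradiction that there are lattices $\Lambda_k\to 0$ with normalized minimizers $u_k$ that are not constant. The universal bounds $Y\le Y(S^n_+)$ and $Q\le Q(B^n)$ for manifolds with boundary (Escobar) imply that $\int g_{\Lambda_k}^{ij}\partial_iu_k\,\partial_ju_k$ stays bounded relative to the normalization. Hence $\int|\partial_tu_k|^2\to0$ and $u_k\to u_0(x)$, where $u_0$ minimizes the reduced functional on $M$ with the exponents $2n/(n-2)$ (resp.\ $2(n-1)/(n-2)$). These exponents are subcritical for $\dim M=m<n$. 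We then need a lemma: if $u\equiv1$ minimizes the Type I (resp.\ II) functional on $M$ with the critical exponent, it also minimizes the reduced functional with the smaller exponent. This follows from Hölder's inequality: with volume normalized to $1$, $\|u\|_{p}\le\|u\|_{p^*}$ for $p<p^*$. Since the numerator is nonnegative in the positive case, the reduced quotient of $u$ is at least $E_M(u)\ge E_M(1)$, which equals the reduced quotient of $1$. Equality forces $u$ to be constant, so $u_0\equiv1$. We upgrade the convergence $u_k\to1$ to $C^{2,\alpha}$ by elliptic regularity for (\ref{0-TypeI-Lag}) and (\ref{0-TypeII-Lag}) in the rescaled coordinates, using Moser iteration and subcriticality to exclude bubbling. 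This compactness, which must hold uniformly in the collapsing metric, is the hardest part. We would first try to lift to finite covers $M\times T(l\Lambda_k)$ so that the fibers have bounded size, then apply Escobar-type compactness and blow-up analysis there.

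The third step is local rigidity. Decompose $u_k=\bar u_k+w_k$, where $\bar u_k$ is the fiber average and $w_k$ has zero fiber mean. The fiber Poincaré inequality gives $\int g_\Lambda^{ij}\partial_iw\,\partial_jw\ge \lambda_1(T(\Lambda))\int w^2$ with $\lambda_1\to\infty$. Therefore the second variation of $E_\Lambda$ at $1$ is positive definite on $w$ for small $g_\Lambda$. Combined with the linearized equation satisfied by $w_k$ near $1$, this forces $w_k=0$, so $u_k=\bar u_k(x)$. Then $u_k$ minimizes the reduced problem on $M$, and by the lemma $u_k$ is constant. This contradicts the choice of $u_k$, so $g(\Lambda)$ is a minimizer for all $g_\Lambda<C$.
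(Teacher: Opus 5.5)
You have a genuine gap. The whole argument rests on your reduction lemma, and Hölder's inequality does not prove it. Restricting $E_\Lambda$ to fiber-independent functions gives
\[
\tilde E(u)=\frac{\int_M c_n|\nabla u|^2+R_hu^2+\int_{\partial M}2H_hu^2}{\|u\|^2_{L^{2n/(n-2)}(M)}},\qquad c_n=\frac{4(n-1)}{n-2}<c_m=\frac{4(m-1)}{m-2},
\]
whereas the hypothesis concerns the quotient with $c_m$ and exponent $2m/(m-2)$.

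Hölder does make the denominator of $\tilde E$ smaller. But the numerator of $\tilde E$ is also smaller, so ``the reduced quotient of $u$ is at least $E_M(u)$'' does not follow. After dividing by the gradient coefficient, lowering the exponent comes together with raising the potential from $R_h/c_m$ to $R_h/c_n$. A larger potential works against constants: on the round $S^m$, constants stop being minimizers once $\lambda(q-2)>m$.

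So you need a quantitative trade-off, and this is exactly what Lemma~\ref{3-s} supplies. It requires that $q/((q-2)\lambda)$ not decrease, which here reads:
\begin{itemize}
\item Type~I ($H_h=0$, potential $R_h$): $\frac{n(n-1)}{n-2}\ge\frac{m(m-1)}{m-2}$;
\item Type~II ($R_h=0$, potential $2H_h$ on $\partial M$, cf.\ Remark~\ref{choice-energy}): $\frac{(n-1)^2}{n-2}\ge\frac{(m-1)^2}{m-2}$.
\end{itemize}
Both hold for $n>m\ge 3$.

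If you want to stay close to your own argument, here is a repair for Type~I. Take volume $1$, write $c_n=\theta c_m$, and set $q_0=\frac{2m}{m-2}$, $q_1=\frac{2n}{n-2}$. The numerator is $\theta N_m(u)+(1-\theta)R_h\|u\|_2^2\ge R_h\bigl(\theta\|u\|_{q_0}^2+(1-\theta)\|u\|_2^2\bigr)$. Interpolation plus AM--GM gives $\|u\|_{q_1}^2\le\alpha\|u\|_{q_0}^2+(1-\alpha)\|u\|_2^2$ with $\alpha=m/n$. The argument closes exactly when $\theta\ge\alpha$, which is the same condition.

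Two further problems.

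First, the a priori bound is mis-scaled. With the prefactor $|\Lambda|^{2/n}$ you wrote, the universal bound $E\le Y(S^n_+)$ only bounds the pulled-back quotient by $Y(S^n_+)|\Lambda_k|^{-2/n}\to\infty$ (resp.\ $Q(B^n)|\Lambda_k|^{-1/(n-1)}$). That gives no uniform $H^1$ bound. What you need is the comparison $E_{\Lambda_k}(u_k)\le E_{\Lambda_k}(1)$, which bounds the quotient uniformly, as in (\ref{3-convergence-middle1}).

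Second, excluding bubbling along an \emph{arbitrary} sequence $\Lambda_k\to0$ is left as a plan. Moreover, the exponent is critical on $N$, so subcriticality cannot rule out bubbling there. The paper avoids arbitrary sequences altogether:
\begin{itemize}
\item It treats only the nested family $2^{-k}\Lambda_0$, lifted to the fixed $N(\Lambda_0)$.
\item There the infinitely divisible periodicity forces $J=\emptyset$ in the concentration--compactness decomposition.
\item It then reaches all $g_\Lambda<C$ through the monotonicity of $E_\Lambda(u)/V(T(\Lambda))^{2/n}$ in $g_\Lambda$, see (\ref{3-E-comparison}).
\end{itemize}
Your contradiction argument over arbitrary lattices gives up that monotonicity, so it needs a uniform compactness result you have not established.

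Your Step~3 is a reasonable substitute for the paper's stability lemma once uniform $L^\infty$ bounds are available: fiber Poincar\'e plus the Euler--Lagrange equation kills $w_k$. But it also ends by invoking the flawed lemma.
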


Notably, our strategy also applies to problems involving $Q$-curvature and to an isoperimetric-ratio type Yamabe problem from \cite{HWY08}\cite{HWY09}.

$Q$-curvature is another fundamental quantity in conformal geometry \cite{HY}.

\begin{definition}For a Riemannian manifold \((M^m,h)\), the Paneitz operator \(P\) and the $Q$-curvature are defined as

\begin{equation}
\begin{aligned}
& J=\frac{R}{2(m-1)} \quad A=\frac{1}{m-2}\left(Ric-J h\right) \\
& Q=-\Delta J-2|A|^2+\frac{m}{2} J^2 \\
& P \varphi=\Delta^2 \varphi+\operatorname{div}\left(4 A\left(\nabla \varphi, e_i\right) e_i-(m-2) J \nabla \varphi\right)+\frac{m-4}{2} Q \varphi \\
& Y_4(g)=\inf\limits_{u\in H^2(M),u\neq 0} E(u)=\inf\limits_{u\in H^2(M),u\neq 0} \frac{\int_M u P u}{\|u\|_{\frac{2 m}{m-4}}^2}\label{Q-def}
\end{aligned}
\end{equation}
\end{definition}

\begin{theorem}\label{thm-Q}
Suppose $(M^m,g)$ has constant $Ric=(m-1)g$ and $Y_4(g)$ is achieved by itself. Then there exists a constant $C$ so that $Y_4(g(\Lambda))$ is also achieved by constant for $g_{\Lambda}\leq C$ if $m\geq 6$ or $n\geq 12$.
\end{theorem}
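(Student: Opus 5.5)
The plan follows the scheme sketched after Theorem~\ref{thm_Yamabe}, adapted to the fourth-order functional.

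\textbf{Step 1: rescale to a fixed torus.} Using the isometry (\ref{isometry}), pull $E$ back to $N(1)=M\times I^{n-m}$. For $g(\Lambda)=h+g_\Lambda$ we have:
\begin{itemize}
\item the product is Einstein on the $M$ factor and flat on the torus, so $J$ and $A$ are constant in the torus directions, and $A$ is block-diagonal with constant entries;
\item consequently $Q$ is a positive constant and $P$ has constant coefficients in the splitting;
\item writing $\Delta=\Delta_M+\Delta_T$, the quadratic form $\int uPu$ splits into terms carrying powers of the torus scale: $\int(\Delta_T u)^2$, $\int \Delta_Mu\,\Delta_Tu$, $\int|\nabla_T u|^2$ terms with weights, and the pure $M$-part.
\end{itemize}
The pure $M$-part is $\int(\Delta_M u)^2+a|\nabla_M u|^2+bu^2$, with constants $a,b$ depending on $m,n$ because $Q$ and $J$ are computed in dimension $n$.

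\textbf{Step 2: local stability of $u\equiv1$.} Expand $u=1+\varphi$ in eigenfunctions of $\Delta_M$ and of the torus Laplacian. For any mode with a nonzero torus frequency, the $\int(\Delta_T\varphi)^2$ term has weight $\sim|g_\Lambda|^{-2}$ relative to the rest. Hence the second variation is positive on such modes once $g_\Lambda<C$. It then suffices to analyze the torus-independent modes.

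\textbf{Step 3: the minimizer becomes torus-independent.} Let $u_\Lambda$ be a minimizer, normalized in $L^{2n/(n-4)}$. This requires $Y_4(g(\Lambda))<Y_4(S^n)$ and positivity of the minimizer; I would assume the standard existence theory here, which is positive since $Q>0$ and $P$ has positive coefficients for Einstein products with $m$ large. The bound $Y_4\le Y_4(S^n)$ forces $\int(\Delta_Tu)^2+\int|\nabla_Tu|^2\to0$ after rescaling. Therefore $u_\Lambda\to u_0$ in $H^2$, after also bounding the mixed terms, with $u_0$ a function on $M$. Moreover $u_0$ minimizes the reduced functional
\[
\tilde E(u)=\frac{\int_M (\Delta u)^2+a|\nabla u|^2+bu^2}{\|u\|_{L^{2n/(n-4)}(M)}^2}.
\]
Elliptic regularity for $P$ then upgrades this to $C^{4,\alpha}$ convergence, so $u_\Lambda$ lies in the neighbourhood of $1$ where Step 2 applies. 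One also needs a stability statement for $\tilde E$ at $1$ that is uniform in $\Lambda$.

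\textbf{Step 4: the main obstacle is showing $u\equiv1$ minimizes $\tilde E$.} The hypothesis is that $1$ minimizes $E_M(u)=\int(\Delta u)^2+a_m|\nabla u|^2+b_mu^2$ with exponent $2m/(m-4)$. By Hölder on $M$, the lower exponent $2n/(n-4)$ only helps: we get $\|u\|_{2n/(n-4)}\le \mathrm{vol}^{\cdot}\|u\|_{2m/(m-4)}$, with equality at constants. It therefore remains to compare coefficients. I would show $a\ge a_m\cdot\lambda$ and $b=b_m\cdot\lambda$ after normalizing by the same factor $\lambda$ (so that the constant function gives equality), or else absorb the difference $a-a_m\lambda$ using Lichnerowicz, $\lambda_1(M)\ge m$, which holds since $\mathrm{Ric}=m-1$. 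Concretely, I would compute:
\begin{itemize}
\item $a=(n-2)J_n-4(m-1)/(n-2)$, coming from the $A$-term restricted to $M$;
\item $b=\frac{n-4}{2}Q_n$;
\item the corresponding $a_m$ and $b_m$.
\end{itemize}
I would then check that the quadratic form $\int(\Delta u)^2+a|\nabla u|^2+b u^2-\frac{b}{b_m}(\cdots)$ is nonnegative on nonconstant eigenfunctions with $\lambda\ge m$. This polynomial inequality in $m,n$ is where the restriction $m\ge6$ or $n\ge12$ should emerge. I expect Step 4 to be the hardest part, especially the cases with small $m$, where $E_M$ is not even well defined as a critical Sobolev problem.
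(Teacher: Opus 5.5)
\textbf{Main gap: Step 4, where Hölder loses too much.}

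Normalize $\mathrm{Vol}(M)=1$ and write
$N_n(u)=\int(\Delta u)^2+a|\nabla u|^2+bu^2$ and $N_m(u)=\int(\Delta u)^2+a_m|\nabla u|^2+b_mu^2$, with $b=\tfrac{n-4}{2}Q_N$ and $b_m=\tfrac{m-4}{2}Q_M$.

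\begin{itemize}
\item After Hölder you need $N_n(u)\ge \tfrac{b}{b_m}N_m(u)$ for all $u$.
\item On an eigenfunction with eigenvalue $\mu$, the difference is $(1-\tfrac{b}{b_m})\mu^2+(a-\tfrac{b}{b_m}a_m)\mu$. So your plan forces $b\le b_m$. Lichnerowicz only bounds $\mu$ from below and cannot control high frequencies.
\item But $b>b_m$. For $n=m+1$ the tensor $A_N$ has eigenvalues $\pm\tfrac12$ and $Q_N=\tfrac{(m+1)^2(m-3)}{8}$, so $b-b_m=\tfrac{2m^2-4m+9}{16}>0$. For example, $(m,n)=(6,7)$ gives $b=\tfrac{441}{16}>24=b_m$.
\item As $n\to\infty$, $b\to\tfrac{m^2(m-1)^2}{16}>\tfrac{m(m-4)(m^2-4)}{16}=b_m$.
\end{itemize}
So your eigenvalue check fails in these cases, and the restriction $m\ge6$ or $n\ge12$ cannot come out of it.

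The missing ingredient is the paper's Lemma~\ref{3-s}. First replace $a$ by $a_m\le a$ (Lemma~\ref{Q-coeff-compare}), so both sides share $e(u)=\int(\Delta u)^2+a_m|\nabla u|^2$. Then constants minimizing at $(\lambda_0,q_0)=(b_m,\tfrac{2m}{m-4})$ imply constants minimize at $(\lambda_1,q_1)=(b,\tfrac{2n}{n-4})$ whenever $\lambda_1(1-\tfrac{2}{q_1})\le\lambda_0(1-\tfrac{2}{q_0})$. Here this reads $\tfrac{n-4}{n}Q_N\le\tfrac{m-4}{m}Q_M$. It is weaker than your $b\le b_m$ by a factor $n/m$; for instance it holds at $(6,7)$ since $\tfrac{63}{8}<8$. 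Lemma~\ref{Q-coeff-compare} shows it holds exactly when $m\ge6$ or $n\ge12$.

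A quick way to see the lemma: the hypothesis says $e(u)\ge\lambda_0(\|u\|_{q_0}^2-\|u\|_2^2)$. Since $\log\|u\|_q$ is convex in $1/q$, the function $\|u\|_q^2-\|u\|_2^2$ is convex in $t=1-2/q$ and vanishes at $t=0$. Its quotient by $t$ therefore increases with $q$, which gives the claimed comparison.

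\textbf{Second gap: Step 3 skips the compactness issue.}

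An energy bound gives only weak $H^2$ convergence. Since $H^2\hookrightarrow L^{2n/(n-4)}$ is critical, you cannot conclude $u_\Lambda\to u_0$ in $H^2$, let alone in $C^{4,\alpha}$, without ruling out concentration below the torus scale.

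Also, the $H^2$ bound on the $M$-part must come from $E_\Lambda(u_\Lambda)\le E_\Lambda(1)$. After your rescaling, $Y_4\le Y_4(S^n)$ only bounds the $M$-part by $C|g_\Lambda|^{-2(n-m)/n}$.

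The paper handles this with nested lattices $2^{-k}\Lambda_0$, lifting the minimizers to the fixed manifold $N(\Lambda_0)$:
\begin{itemize}
\item The lifts are $2^{-l}\Lambda_0$-periodic for all $k\ge l$. A Dirac mass in the concentration--compactness decomposition would therefore recur at $2^{l(n-m)}$ points with equal mass, contradicting finite total mass.
\item Strong $L^q$ convergence then follows from Brezis--Lieb. The limit is torus-independent and satisfies $E(u_0)\le E(1)$.
\item Once $u_0\equiv1$, equality in lower semicontinuity gives $H^2$ convergence on this one manifold. A single stability neighbourhood there suffices, so neither uniform-in-$\Lambda$ stability nor a regularity upgrade is needed.
\item Monotonicity of the normalized functional in $g_\Lambda^{-1}$ then extends the result from the lattices $2^{-k}\Lambda$ to all $g_\Lambda\le C$.
\end{itemize}
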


\begin{definition}\label{definition-isoYamabe}
Consider $(M^m, \Sigma=\partial M, h)$.
Given $u$ on $\Sigma$, and let $Pu$ be its hamonic extension to $M$.
    For $p_1=\frac{2 (m-1)}{m-2}, q_1=\frac{2 m}{m-2}$, Define
    \begin{equation}
        \begin{aligned}
            E(u)&=\frac{\|P u\|_{L^{q_1}(M)}}{\|u\|_{L^{p_1}(\Sigma)}}\\
            s(M,h)&=\sup\limits_{u\in L^{p_1}(\Sigma), u\neq 0} E(u)
        \end{aligned}
    \end{equation}
\end{definition}

It was shown in \cite{HWY08}\cite{HWY09} that
\begin{theorem}\label{HWY}
For \(1 \leq p < \infty\) and \(1 \leq q \leq \frac{m p}{m-1}\), the operator \(P: L^p(\Sigma) \to L^q(M)\) is bounded, and compact when \(q < \frac{m p}{m-1}\). Hence \(s(M,h)\) is well-defined.

When $(M,h)$ is the standard disk, the constant \(s(B^m, dx^2)\) is attained by the function \(u_\phi\) induced by a Möbius transformation \(\phi\) (so \(\phi^*(dx^2)=u_\phi^{4m/(m-2)}dx^2\)). If \(s(M,h) < S(B^m, dx^2)\), then \(s(M,h)\) is also attained.
\end{theorem}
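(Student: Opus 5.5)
This result is quoted from \cite{HWY08}\cite{HWY09}, so the plan below reconstructs their argument in three steps: boundedness and compactness of $P$, then the model case on the ball, then existence by concentration--compactness.

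\textbf{Step 1: mapping properties of $P$.} I write $Pu(x)=\int_\Sigma K(x,y)u(y)\,dy$, where $K$ is the Poisson kernel. Near the boundary it obeys the standard estimate
\[
K(x,y)\lesssim \frac{d(x)}{|x-y|^{m}}, \qquad d(x)=\operatorname{dist}(x,\Sigma).
\]
\begin{itemize}
\item For $q=p$, boundedness follows from $\int_\Sigma K(x,y)\,dy=1$ and Jensen's inequality.
\item For the endpoint $q=\frac{mp}{m-1}$, I use the weak-type bound for kernels of this form, which behave like a fractional integral operator in one extra dimension. Concretely, $|Pu(x)|$ is controlled by the boundary maximal function at the projection of $x$, at scale $d(x)$. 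Combining this with the bound $|Pu(x)|\lesssim d(x)^{-(m-1)/p}\|u\|_{L^p}$ and Hedberg's trick gives a weak-type $(p,\frac{mp}{m-1})$ estimate.
\item Marcinkiewicz interpolation then gives the strong bound. For $p=1$ only the weak bound survives, and the statement should presumably be read with $p>1$ at the endpoint.
\end{itemize}
Compactness for $q<\frac{mp}{m-1}$ comes from two facts. On $\{d(x)>\delta\}$ the operator $P$ is smoothing, hence compact. On the collar $\{d<\delta\}$ the operator norm into $L^q$ tends to $0$ as $\delta\to0$: by Hölder against the endpoint estimate, the small volume $\delta$ of the collar gives a factor $\delta^{1/q-(m-1)/(mp)}$. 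So $s(M,h)<\infty$.

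\textbf{Step 2: the ball.} I use the conformal covariance of the pair $(p_1,q_1)$. For a Möbius map $\phi$ of $B^m$, the transformation $u\mapsto (u\circ\phi)\,|\phi'|^{(m-2)/2}$ preserves both $\|u\|_{L^{p_1}(\Sigma)}$ and the harmonic extension up to the same weight. Harmonic functions are not conformally invariant, so the second claim needs care. The fix is to compare with the conformally covariant extension, which is the Poisson integral for the hyperbolic conformal Laplacian. Then $E$ is invariant under the Möbius group, which is noncompact.

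To identify the maximizer I would use the symmetrization and competing-symmetries argument of Carlen--Loss. A maximizing sequence is normalized by Möbius maps and rearranged on $S^{m-1}$, which does not decrease $E$ since the Poisson kernel is symmetric decreasing. Iterating the rearrangement with a rotation converges to a constant. This shows the constant (up to Möbius action) is the maximizer, that is, $u_\phi$.

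\textbf{Step 3: existence when $s(M,h)<s(B^m)$.} Take a maximizing sequence $u_k$ with $\|u_k\|_{p_1}=1$. Pass to a weak limit $u$, and apply Brezis--Lieb to both the boundary norm and the interior norm along with Lions' concentration--compactness. The defect of the sequence splits into point masses at boundary points. Blowing up at such a point, the rescaled problem converges to the half-space, equivalently the ball, problem. So each bubble carries ratio at most $s(B^m)$, up to $o(1)$ errors from the metric.

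Strict inequality, together with the concavity inequality $(a+b)^{\theta}\le a^{\theta}+b^{\theta}$ with exponent ratio $q_1/p_1>1$, then forces no loss of mass, and $u$ is a maximizer.

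\textbf{Main obstacle.} I expect the hardest part to be the localization estimate in Step 3. One has to show that $P$ on $(M,h)$ near a boundary point is approximated by the flat Poisson operator with constant $s(B^m)+o(1)$, uniformly under concentration. This requires controlling the kernel difference via barrier estimates in Fermi coordinates.
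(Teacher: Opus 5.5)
The paper gives no proof of this statement; it only cites \cite{HWY08}\cite{HWY09}, so your outline has to stand on its own.

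\textbf{Step~1.} This step is sound. Jensen's inequality also needs $\sup_{y}\int_M K(x,y)\,dx<\infty$, which holds. Your caveat at $p=1$ is correct. If $f_k\to\delta_\zeta$ is an approximate identity, Fatou gives $\liminf\|Pf_k\|_{L^{m/(m-1)}}\ge\|K(\cdot,\zeta)\|_{L^{m/(m-1)}}=\infty$, since $K(x,\zeta)\sim d(x)|x-\zeta|^{-m}$. So the printed statement is false at that endpoint.

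\textbf{Step~3: the inequality runs the wrong way.} This is the genuine gap. For a \emph{supremum}, a bubble at $\zeta_j$ contributes ratio \emph{at most} $S(B^m)$. Normalizing $\sigma(\Sigma)=1$, so that $\|f\|_{p_1}^{p_1}+\sum_j\sigma_j\le 1$, your chain reads
\[
s^{p_1}\le s^{p_1}\|f\|_{p_1}^{p_1}+S(B^m)^{p_1}\sum_j\sigma_j .
\]
This rules out $\sigma_j>0$ only when $S(B^m)<s$.
\begin{itemize}
\item Under $s<S(B^m)$ nothing is excluded. Total concentration ($f=0$, a single atom with $\sigma_1=1$) merely gives $s\le S(B^m)$.
\item The hypothesis can in fact never hold. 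Transplant the rescaled trace $\epsilon^{-(m-2)/2}(1+|x'|^2/\epsilon^2)^{-(m-2)/2}$ of the half-space extremal, cut off, into Fermi coordinates at any boundary point. Then $E\to S(B^m)$, so $s(M,h)\ge S(B^m)$ for every $(M,h)$.
\item The condition in the statement is a slip for $s(M,h)>S(B^m)$: the Yamabe minimization logic was copied without reversing the inequality. Your argument inherits the slip; with $>$, the displayed chain does force $\sigma_j=0$.
\end{itemize}
Brezis--Lieb is also not available for the boundary norm. Weak convergence in $L^{p_1}(\Sigma)$ gives no a.e.\ convergence, and the boundary defect need not be atomic because of oscillations. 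What holds is only the inequality $\sigma\ge|f|^{p_1}dS+\sum_j\sigma_j\delta_{\zeta_j}$. The defect of the interior norm, $\nu-|Pf|^{q_1}d\mu$, is purely atomic and supported on $\Sigma$; that is exactly the lemma quoted in Section~4. Weak lower semicontinuity of $\|f\|_{p_1}$ is what you should use.

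\textbf{Step~2 also fails as written.} Cap symmetrization on $S^{m-1}$ and rotations both preserve the distribution function of $f$ with respect to surface measure. Hence $(RU)^kf$ can never converge strongly to a constant unless $f$ already is one. For example, the indicator of any hemisphere is sent to the indicator of the northern hemisphere at every step.

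Carlen--Loss works only because the two symmetries genuinely compete. The repair is:
\begin{itemize}
\item Rearrange on $\partial\mathbb{R}^m_+=\mathbb{R}^{m-1}$ with Lebesgue measure. The half-space Poisson kernel is symmetric decreasing in $x'-\xi$ on each slice $x_m=\mathrm{const}$, so Riesz's inequality makes $\|Pf\|_{q_1}$ nondecreasing.
\item Let $U$ be the Carlen--Loss rotation, transported to the half-space by the Cayley map and acting with weight $|\phi'|^{(m-2)/2}$.
\item Their theorem with $N=m-1$ and $N/p_1=(m-2)/2$ gives $(RU)^kf\to c\,(1+|\xi|^2)^{-(m-2)/2}$, which is the trace of $u_\phi$.
\item The continuity from Step~1 finishes the argument, and no maximizing sequence is needed.
\end{itemize}

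\textbf{Conformal covariance.} Your doubt here is unfounded, and the proposed fix would change the problem. The flat metric has $R=0$, so the Laplacian is covariant under Möbius maps with weight $|\phi'|^{(m-2)/2}$ (the Kelvin transform). Therefore $P(|\phi'|^{(m-2)/2}f\circ\phi)=|\phi'|^{(m-2)/2}(Pf)\circ\phi$ exactly. The Jacobians $|\phi'|^{m-1}$ on the boundary and $|\phi'|^{m}$ in the interior preserve both norms. Using the Poisson operator of the hyperbolic conformal Laplacian instead would replace $E$ by a different functional.
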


And for this Yamabe type quantity, we also have a similar result.

\begin{theorem} \label{thm-isoratio}
     Let $N(\Lambda)=T^{n-m}(\Lambda) \times B^m,\Sigma(\Lambda)=S^{m-1}\times T^{n-m}(\Lambda), g(\Lambda)=dx^2+g_{\Lambda}$, and $E_{\Lambda}$, $s(\Lambda)$ be the related functional in Definition \ref{definition-isoYamabe}. For arbitrary $\Lambda$, there exist a $k(\Lambda)$ so that if $l>k(\Lambda)$, then $s(\Lambda/2^l)$ is achieved by const if $m>3$ or $n>6$.
\end{theorem}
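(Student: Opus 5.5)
We will follow the same two-stage strategy used for Theorem \ref{thm_Yamabe}. First we show that the constant $u\equiv1$ on $\Sigma(\Lambda/2^l)$ is a strict local maximizer of $E_{\Lambda/2^l}$ once $l$ is large. Then we show that any maximizer $u_l$ must converge to a constant as $l\to\infty$. Local strictness together with convergence forces $u_l$ to be constant for large $l$.

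Throughout we pull everything back to the fixed model $N(\Lambda)=T(\Lambda)\times B^m$ via the dilation in the torus directions, as in (\ref{isometry-Yamabe}). Under this map the harmonic extension operator becomes the extension for the operator
\[
\Delta_{B^m}+4^{l}\Delta_{T(\Lambda)}.
\]
The quotient $E$ rescales by an explicit power of $2^{-l(n-m)}$ coming from the two volume exponents $1/q_1$ and $1/p_1$. Note the exponents here are the $n$-dimensional ones, since $N$ is $n$-dimensional; they are $p=\frac{2(n-1)}{n-2}$ and $q=\frac{2n}{n-2}$.

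\textbf{Existence and large-$l$ limit.} We use Theorem \ref{HWY} to guarantee that $s(\Lambda/2^l)$ is attained. This requires the strict inequality $s<s(B^n,dx^2)$. We obtain it by testing with constants: $E(1)=\mathrm{Vol}(N)^{1/q}/\mathrm{Vol}(\Sigma)^{1/p}$, and the ratio $|B^m|^{1/q}|T|^{1/q-1/p}/|S^{m-1}|^{1/p}$ grows as $|T|\to0$, because $1/q-1/p<0$. So for large $l$ the constant already beats the ball, and this strict inequality should be exactly the condition that encodes the dimensional restriction $m>3$ or $n>6$.

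Next take a maximizer $u_l$, normalized by $\|u_l\|_{L^p}=1$. We Fourier-decompose in the torus variable, $u_l=\sum_k a_k(x)e^{2\pi i k\cdot y}$. The harmonic extension of the mode $k\ne0$ solves
\[
\Delta_{B^m}w=4^{l}|k|^2w,
\]
so it decays exponentially away from $S^{m-1}$ at rate $2^l|k|$. Its $L^q(N)$ norm is therefore controlled by roughly $2^{-l/q}\|a_k\|$. Using the compactness part of Theorem \ref{HWY} on the $B^m$ factor, we would show that $u_l$ converges to a torus-independent $u_0$. The function $u_0$ then maximizes the reduced ratio $\|P_{B^m}u\|_{L^q(B^m)}/\|u\|_{L^p(S^{m-1})}$, with exponents $p,q$ strictly subcritical for $B^m$ when $n>m$.

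\textbf{Identifying $u_0$ as constant.} We must show that this subcritical reduced problem on the round ball is maximized by constants. We would argue by symmetry: by rearrangement or moving planes in the conformal (M\"obius) picture, any maximizer is axially symmetric. The Euler--Lagrange equation is $(Pu)^{q-1}$ paired with the Poisson kernel on the sphere. Its second variation at $1$ reduces to a spherical-harmonic eigenvalue comparison: the degree-$j$ harmonic $Y_j$ extends to $r^jY_j$. Stability at the constant requires
\[
(q-1)\int_{B^m}r^{2j}\,\big/\,|B^m|\;\le\;(p-1)\,\big/\,|S^{m-1}|\quad\text{for } j\ge1,
\]
which amounts to $(q-1)\frac{m}{m+2j}\le p-1$. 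We check this at $j=1$, and we expect $m>3$ or $n>6$ to be exactly what makes it strict.

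\textbf{Conclusion and main obstacle.} The same computation, with added torus modes (which are strongly damped since $|k|2^l\to\infty$), gives a uniform negative-definite Hessian at $u\equiv1$ modulo scaling. An implicit-function or quantitative stability argument near $u_0=1$ then forces $u_l\equiv1$.

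We expect the hardest step to be the global statement that constants maximize the reduced problem. Uniqueness of maximizers for this nonlocal subcritical problem is not covered by earlier results. We would first try a Jensen or H\"older argument: write $Pu$ at $x$ as a Poisson average of $u$, apply H\"older with exponent $q/p$, and integrate over $x$. Only if that fails would we fall back on a moving-plane uniqueness argument.
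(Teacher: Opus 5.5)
Your three-step outline (strict local maximality of $u\equiv 1$, convergence of maximizers to a torus-independent $u_0$, identification $u_0\equiv 1$) is the paper's. Your second-variation condition $(q-1)\frac{m}{m+2}<p-1$ is the paper's $\frac{m(n+2)}{n}<m+2$.

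The genuine gap is the identification step. You need that, for $p=\frac{2(n-1)}{n-2}$, $q=\frac{2n}{n-2}$ and volume-averaged norms, $\|P_Bu\|_{L^q(B^m)}\le\|u\|_{L^p(S^{m-1})}$, with equality only for constants. You have no argument that delivers this.

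\textbf{Your first attempt fails.} Jensen with the Poisson kernel gives only $|P_Bu|^p\le P_B(|u|^p)$. Raising the exponent from $p$ to $q$ then reduces to bounding $P_B:L^1(S^{m-1})\to L^{q/p}(B^m)$. That operator norm is strictly bigger than its value $1$ on constants: data concentrating to $\delta_\xi$ produce the Poisson kernel $K(\cdot,\xi)$, whose averaged $L^{q/p}$ norm exceeds its averaged $L^1$ norm. So this route loses the sharp constant and cannot single out constants as maximizers.

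\textbf{Your fallback also fails.} Moving planes or rearrangement give at most axial symmetry, not constancy.

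\textbf{The missing idea is interpolation against the sharp critical inequality.} In averaged norms, $P_B$ has norm $1$ from $L^1(S^{m-1})$ to $L^1(B^m)$, by positivity of the Poisson kernel and $\int_{S^{m-1}}u=m\int_{B^m}P_Bu$. It also has norm $1$ from $L^{p_1}$ to $L^{q_1}$, with $p_1=\frac{2(m-1)}{m-2}$ and $q_1=\frac{2m}{m-2}$, by the Hang--Wang--Yan sharp inequality \cite{HWY08}, where constants are extremal. Riesz--Thorin with $\theta=\frac{n-m}{m(n-1)}$ then gives norm at most $1$ from $L^{p}$ to $L^{q_\theta}$. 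H\"older from $q_\theta$ down to $q$ yields the desired inequality, strict for nonconstant $u$, provided $q_\theta>\frac{2n}{n-2}$, i.e. $(m-2)n>2m$.

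That condition is the only source of the hypothesis $m>3$ or $n>6$. Your guesses that it sits in the existence or stability inequalities are wrong: your stability inequality is strict for all $n>m$, and $E(1)\to\infty$ as $|T|\to0$ in every dimension.

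Two smaller points:
\begin{itemize}
\item Your existence argument is a correct addition that the paper leaves implicit. But for a supremum the attainment criterion is $s>s(B^n,dx^2)$, which is exactly what a constant test function proves; $s<s(B^n,dx^2)$, as you wrote it, cannot be obtained by testing.
\item Your mode-by-mode damping estimate is heuristic for $p,q\neq 2$, since $L^q$ bounds cannot simply be summed over Fourier modes. The paper instead lifts the maximizers to the fixed $N(\Lambda)$ as $2^{-l}\Lambda$-periodic functions and applies the boundary concentration--compactness lemma of \cite{HWY09}, which gives strong convergence cleanly.
\end{itemize}
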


This paper is organized as follows. Section 2 presents the main ideas and provides a detailed proof of \textbf{Theorem \ref{thm_Yamabe}}. (the similar proof of \textbf{Theorem \ref{thm_I II}} is given in one sentence). Applying the same strategy, Sections 3 and 4 prove \textbf{Theorem \ref{thm-Q}} and \textbf{Theorem \ref{thm-isoratio}}, highlighting only the necessary modifications. Finally, Section 5 states a conjecture on the Yamabe constant of \(M \times T^{n-m}\) and connects it to the Yamabe invariant of \(S^2 \times S^2\).

\textbf{Acknowledgement} The authors are grateful to Prof. Kazuo Akutagawa, Mijia Lai, Jimmy Petean and Xiaodong Wang for their helpful discussion. Fang Wang’s research was supported by National Natural Science Foundation of China No. 12271348 and Zhixin Wang's research was supported by China Postdoctoral Science Fundation 2025T180838.

\section{Yamabe Constant}

This section provides a detailed proof of \textbf{Theorem \ref{thm_Yamabe}}, divided into three parts.

1. Stability: We estimate the eigenvalues of \((N(\Lambda),g(\Lambda))\) and show that when \(\Lambda\) is sufficiently small, \(u \equiv 1\) is a stable critical point for \(E(\Lambda)\). More precisely, within an \(\epsilon\)-neighborhood of the constant function, \(u \equiv 1\) uniquely minimizes the energy, up to constants.

2. Limit of minimizers: Fix a small \(\Lambda_0 = \{\vec{a}_i\}_{i=1}^{n-m}\) and set 
 \begin{equation}
 \Lambda_k=\frac{1}{2^k}\Lambda_0\coloneqq \{\frac{1}{2^k}\vec{a}_i\}_{i=0}^{n-m}\notag
\end{equation}
 Let \(v_k\) be a minimizer for \(Y(\Lambda_k)\) and lift it to a function \(u_k\) on \(N(\Lambda_0)\). We prove that \(u_k\) converges to a limit function \(u_0\). Because \(u_0\) inherits arbitrarily small periodicity along \(T^{n-m}\), it must be independent of the torus coordinates and can therefore be regarded as a function on \(M\). Moreover, the minimizing property of each \(v_k\) is preserved in the limit, which implies its energy does not exceed that of $u\equiv 1$, which is (\ref{lemma-3-convergence-2}).

3. Identification of the limit: Using the ``small energy" property (\ref{lemma-3-convergence-2}) and the assmuption that $(M,h)$ is Yamabe metric, we conclude that \(u_0 \equiv 1\). This convergence, together with the stability established in part 1, implies that \(Y(g(\Lambda_k))\) is minimized by a constant for all sufficiently large \(k\).

Finally, we briefly adapt the argument to prove \textbf{Theorem \ref{thm_I II}}.

\subsection{Stability}\,

The motivation of this section is the following computation. On a general Riemannian manifold $(M,g)$ with constant scalar curvature $R_h$. Suppose $\int_{M}u=0$, consider $E_{(M^m,h)}(1+tu)$ and computes its second derivative, we have
\begin{equation}
    \frac{\partial^2 }{\partial t^2}E_{(M,h)}(1+tu)=\frac{8}{(m-2)|M|}\big(\int_{M}(m-1)|\nabla_M u|^2-R_hu^2\big)\label{2-2nd derivative}
\end{equation}
In particular, in the setting of \textbf{Theorem \ref{thm_Yamabe}}, we have
\begin{lemma}\label{lemma-eigen-M}
    Let $\lambda(M),\lambda(\Lambda)$ be the first eigenvalue of Laplacian for $(M,h)$ and $(N(\Lambda),g(\Lambda))$ respectively. Suppose $(M^m,h)$ is a Yamabe metric, then we have
    \begin{equation}
        (m-1)\lambda(M)\geq R_h\label{eigen-M}
    \end{equation}
    On $(N(\Lambda),g(\Lambda))$, $u\equiv 1$ is a stable critical point for $E_{\Lambda}$ provided that
    \begin{equation}
        (n-1)\lambda(\Lambda)> R_{g(\Lambda)}=R_h\label{eigen-M}
    \end{equation}
\end{lemma}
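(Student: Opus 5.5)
The plan is to derive both statements from the second variation formula (\ref{2-2nd derivative}), applied once on $(M,h)$ and once on $(N(\Lambda),g(\Lambda))$.

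\textbf{First inequality.} Suppose $(M,h)$ is a Yamabe metric. Then $u\equiv 1$ minimizes $E_{(M,h)}$, and $R_h$ is constant. Let $\phi$ be a first eigenfunction of $-\Delta_M$, so $-\Delta_M\phi=\lambda(M)\phi$ and $\int_M\phi=0$. Since $t\mapsto E_{(M,h)}(1+t\phi)$ has a minimum at $t=0$, its second derivative there is nonnegative. By (\ref{2-2nd derivative}) this gives
\begin{equation*}
(m-1)\int_M|\nabla_M\phi|^2-R_h\int_M\phi^2\ge 0 .
\end{equation*}
Integrating by parts, $\int_M|\nabla_M\phi|^2=\lambda(M)\int_M\phi^2$. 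Dividing by $\int_M\phi^2>0$ yields $(m-1)\lambda(M)\ge R_h$.

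The formula (\ref{2-2nd derivative}) should be checked for the functional normalized as in (\ref{0-Yamabe-func}), but it only enters through its sign. Write $p=\frac{2m}{m-2}$. The numerator is $|M|R_h+t^2\int(c_m|\nabla u|^2+R_hu^2)$, using $\int u=0$ so the linear term vanishes. The denominator is $|M|^{2/p}\bigl(1+\tfrac{t^2}{2}(p-1)\tfrac{2}{p}\tfrac{\int u^2}{|M|}+O(t^3)\bigr)$. The $t^2$ coefficient is therefore proportional to
\begin{equation*}
c_m\int|\nabla u|^2+R_h\Bigl(1-\tfrac{2(p-1)}{p}\Bigr)\int u^2=c_m\int|\nabla u|^2-\tfrac{4}{m-2}R_h\int u^2 ,
\end{equation*}
which is a positive multiple of $(m-1)\int|\nabla u|^2-R_h\int u^2$.

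\textbf{Second inequality.} On $N(\Lambda)=M\times T(\Lambda)$ the product metric has scalar curvature $R_{g(\Lambda)}=R_h$, since the torus is flat. So $u\equiv 1$ is a critical point of $E_\Lambda$, because constants solve (\ref{0-Yamabe_Lag}). The first variation in the constant direction is zero by scale invariance, so it suffices to consider perturbations $1+tu$ with $\int_N u=0$. Applying (\ref{2-2nd derivative}) with $m$ replaced by $n$ and using the Poincaré inequality $\int|\nabla u|^2\ge\lambda(\Lambda)\int u^2$ gives
\begin{equation*}
(n-1)\int_N|\nabla u|^2-R_h\int_N u^2\ge\bigl((n-1)\lambda(\Lambda)-R_h\bigr)\int_N u^2 .
\end{equation*}
This is strictly positive under the stated hypothesis. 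Hence the Hessian is positive definite on the complement of the constants. This coercive quadratic form also controls $\|u\|_{H^1}^2$, because $\int|\nabla u|^2$ is bounded by the full form up to constants.

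\textbf{Main obstacle.} The delicate point is upgrading positivity of the second variation to the local statement used later: that $1$ uniquely minimizes in an $\epsilon$-neighborhood, up to constants. This requires controlling the $O(t^3)$ remainder in the critical exponent term. I would do this with the elementary inequality $\bigl||1+s|^{p}-1-ps-\tfrac{p(p-1)}{2}s^2\bigr|\le C(|s|^3+|s|^p)$ together with the Sobolev embedding $H^1\hookrightarrow L^{p}$. The remainder is then $o(\|u\|_{H^1}^2)$ uniformly, as long as the neighborhood is measured in $H^1$.
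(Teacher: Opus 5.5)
Your argument is correct and is the one the paper intends. The lemma is read off from the second-variation formula (\ref{2-2nd derivative}): tested on a first eigenfunction of $(M,h)$ it gives the necessary condition, and combined with the Poincar\'e inequality on $N(\Lambda)$ it gives the sufficient one. The upgrade to a strict local minimum modulo constants is carried out separately in Lemma \ref{lemma-stability}.

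Two slips are worth fixing, though neither affects the conclusion.

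In your check of (\ref{2-2nd derivative}), the $t^2$ term of $\bigl(\int|1+tu|^p\bigr)^{2/p}$ is $(p-1)t^2\int u^2/|M|$; a factor $p$ was lost. So the coefficient of $R_h\int u^2$ is $2-p=-\tfrac{4}{m-2}$, whereas $1-\tfrac{2(p-1)}{p}=-\tfrac{2}{m}$. Your final expression is right, but it does not follow from the line before it.

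In the remainder estimate on $N(\Lambda)$, you have $p=\tfrac{2n}{n-2}<3$ once $n>6$, so $\int|u|^3$ is not controlled by $\|u\|_{H^1}$. Use the bound $C\bigl(|s|^{\min(3,p)}+|s|^p\bigr)$ instead, which is simply $C|s|^p$ when $2<p\le 3$. This is what the paper's estimate $|1+x|^{q-2}\le 1+\delta+C|x|^{q-2}$ accomplishes.
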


We are going to show that $\lambda(\Lambda)=\lambda(M)$ for $\Lambda$ small enough, and the stability of $u\equiv 1$ for $E_{\Lambda}$ follows directly from the lemma above.

\begin{lemma}\label{lemma_eigen}
    Let $\lambda(\Lambda)$ be the 1st non-zero eigenvalue for Laplacian of $(N(\Lambda),g(\Lambda))$, and $\lambda(M)$ be the 1st non-zero eigenvalue of Laplacian for $(M,h)$. Then there exists a $C$ depending on $M$ only so that
\begin{equation}
    \lambda(\Lambda)=\lambda(M) \text{ for }g_\Lambda\leq C
\end{equation}
\end{lemma}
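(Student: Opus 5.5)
The plan is to separate variables on the product $N(\Lambda)=M\times T(\Lambda)$. The Laplacian of the product metric $h+g_\Lambda$ splits as $\Delta_{N}=\Delta_M+\Delta_{T(\Lambda)}$, and $L^2(N(\Lambda))$ has an orthonormal basis of products $\phi_j(x)\psi_\xi(y)$. Here $\phi_j$ are eigenfunctions of $-\Delta_M$ with eigenvalues $0=\mu_0<\mu_1=\lambda(M)\le\mu_2\le\cdots$. The $\psi_\xi(y)=e^{2\pi i\langle \xi,y\rangle}$ are indexed by $\xi$ in the dual lattice $\Lambda^*$, with eigenvalues $4\pi^2|\xi|^2$. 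Consequently the spectrum of $-\Delta_N$ is exactly $\{\mu_j+4\pi^2|\xi|^2:\ j\ge 0,\ \xi\in\Lambda^*\}$. I would state this first, justifying completeness of the product basis by the standard tensor product argument (Stone--Weierstrass, or density of the linear span of products in $H^1$).

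Next I would identify the first nonzero eigenvalue. It is
\[
\lambda(\Lambda)=\min\Big\{\lambda(M),\ 4\pi^2\min_{0\neq\xi\in\Lambda^*}|\xi|^2\Big\},
\]
because any eigenvalue with $j\ge1$ is at least $\mu_1$, and any eigenvalue with $j=0$ and $\xi\neq0$ is at least $4\pi^2|\xi|^2$. The same quantity is the first nonzero eigenvalue $\lambda(T(\Lambda))$ of the flat torus. So it suffices to show that $\lambda(T(\Lambda))\ge\lambda(M)$ whenever $g_\Lambda\le C$ in the matrix sense defined in the introduction.

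The main step, and the only place the hypothesis $g_\Lambda\le C$ enters, is a lower bound on the shortest dual vector. I would use the pullback $\Phi_\Lambda$ from (\ref{isometry}), under which $T(\Lambda)$ becomes $\mathbb{R}^{n-m}/\mathbb{Z}^{n-m}$ with constant metric $G=(v_i\cdot v_j)$. Its eigenfunctions are $e^{2\pi i\langle k,x\rangle}$ with $k\in\mathbb{Z}^{n-m}$, and their eigenvalues are $4\pi^2 G^{-1}(k,k)$. If $G\le C\,\mathrm{Id}$, then $G^{-1}\ge C^{-1}\mathrm{Id}$, so for $k\neq0$ we get $4\pi^2G^{-1}(k,k)\ge 4\pi^2|k|^2/C\ge 4\pi^2/C$. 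Choosing $C=4\pi^2/\lambda(M)$, which depends only on $(M,h)$, gives $\lambda(T(\Lambda))\ge\lambda(M)$. Hence $\lambda(\Lambda)=\lambda(M)$, attained by $\phi_1(x)$ extended constantly in $y$.

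I expect no serious obstacle. The points to handle carefully are the operator-monotonicity step $G\le C\,\mathrm{Id}\Rightarrow G^{-1}\ge C^{-1}\mathrm{Id}$, which follows from diagonalizing the positive definite $G$, and stating the completeness of the separated eigenbasis cleanly.
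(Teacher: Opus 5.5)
Your proof is correct, and it takes a genuinely different route from the paper's.

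\textbf{The paper's route.} The paper never computes the spectrum. It reads off from the pulled-back Rayleigh quotient (\ref{2-eigenvalue}) that $\lambda(\Lambda_1)\geq\lambda(\Lambda_2)$ whenever $g_{\Lambda_1}\leq g_{\Lambda_2}$, and uses the test function $u_M$ to get $\lambda(\Lambda)\leq\lambda(M)$. It then follows the dyadic cubes $\Lambda_p$. Lifting eigenfunctions to a fixed $N(\Lambda_1)$ traps the increasing sequence $\lambda(\Lambda_p)$ in the finite set of eigenvalues of $N(\Lambda_1)$ below $\lambda(M)$, so the sequence stabilizes. Equality in the monotone quotient then forces the eigenfunction to be independent of the torus, which gives $\lambda(\Lambda_p)=\lambda(M)$. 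Monotonicity finally covers every $g_\Lambda\leq g_{\Lambda_p}$, with a non-explicit $C$.

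\textbf{What your route buys.} Your separation of variables replaces this soft argument with the explicit identity $\lambda(\Lambda)=\min\{\lambda(M),\,4\pi^2\min_{k\in\mathbb{Z}^{n-m}\setminus\{0\}}G^{-1}(k,k)\}$. This gives the explicit constant $C=4\pi^2/\lambda(M)$. That constant is sharp for cubic lattices: $G=c\,\mathrm{Id}$ with $c>4\pi^2/\lambda(M)$ gives $\lambda(\Lambda)=4\pi^2/c<\lambda(M)$. It also avoids the tersely justified step where stabilization of $\lambda(\Lambda_p)$ has to be converted into torus-independence of the eigenfunctions.

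\textbf{What the paper's route buys.} It is robust. It uses only monotonicity in $g_\Lambda$, invariance of eigenvalues under lifting to coverings, and discreteness of the spectrum. That is why the paper can invoke ``the same strategy'' for the first eigenvalue $\mu_1$ of $P^*P$ in Section 4 and for the Steklov eigenvalue in the Type II case. There the operators still diagonalize in the product basis, but the product eigenvalues are no longer sums of factor eigenvalues; they come from Dirichlet-to-Neumann or harmonic-extension problems for $\Delta_M-4\pi^2G^{-1}(k,k)$. So your explicit formula would not transfer directly.
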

\begin{proof}

We splits $\int_{N(\Lambda)}|\nabla_{N(\Lambda)} u|^2$ into
\begin{equation}
    \int_{N(\Lambda)}|\nabla_{N(\Lambda)} u|^2dv_\Lambda=\int_{N(\Lambda)}|\nabla_M u|^2+\int_{N(\Lambda)}|\nabla_Tu|^2
\end{equation}
where $\nabla_M u$ is the push-forward of $\nabla_M u$ via the projection $N(\Lambda)\rightarrow M$, and $\nabla_T u$ is the push-forward of $\nabla_M u$ via the projection $N(\Lambda)\rightarrow T^{n-m}$. Using isometry (\ref{isometry}), $\lambda(\Lambda)$ can be rewritten as
\begin{equation}
 \begin{aligned}       \lambda(\Lambda)&=\inf\frac{\int_{T(\Lambda)}\int_{M}(|\nabla_Tu|^2+|\nabla_M u|^2)dv_Mdv_{T(\Lambda)} }{\int_{T(\Lambda)}\int_{M}u^2dv_Mdv_{T(\Lambda)}}\\
        &=\inf\frac{\int_{T(\vec{1})}\int_{M}(g_{\Lambda}^{ij}\frac{\partial u}{\partial x^i}\frac{\partial u}{\partial x^j}+|\nabla_M u|^2)dv_Mdv_{T(\vec{1})}}{\int_{T(\vec{1})}\int_{M}u^2dv_Mdv_{T(\vec{1})}} \label{2-eigenvalue}
\end{aligned}
\end{equation}
where $u\in H^1$ and $u$ is not constant. Therefore, $\lambda(\Lambda_1)\geq \lambda(\Lambda_2)$ provided that $g_{\Lambda_1}\leq g_{\Lambda_2}$.

Next, consider the sequence $(N(\Lambda_p), g(\Lambda_p))$ where $\Lambda_p$ is the cube with side length $\frac{1}{2^p}$ for $p \in \mathbb{Z}_+$, and let $u_p$ denote its first eigenfunction. The apparent $p^{n-m}$-sheeted covering map $I^{n-m}=\Lambda_1 \to T^{n-m}(\Lambda_p)$ allows us to pull back $u_p$ to an eigenfunction $\tilde{u}_p$ on $(N(\Lambda_1), g(\Lambda_1))$ with eigenvalue $\lambda(\Lambda_p)$.

Using the first eigenfunction $u_M$ and eigenvalue $\lambda_M$ of $(M, h)$ as a test function, we find that $\lambda(\Lambda_p) \leq \lambda(M)$ for each $p$. $g_{\Lambda_p}<g_{\Lambda_q}$ for $p>q$, and combined with the monotonicity established earlier, this implies the pulled-back eigenvalues $\lambda(\Lambda_p)$ form an increasing sequence in $p$ that is bounded above by $\lambda(M)$.

Since the number of eigenvalues of $(N(\Lambda_1, g((\Lambda_1))$ below $\lambda(M)$ is finite, the sequence $\lambda((\Lambda_p)$ must become constant for all sufficiently large $p$. From equation (\ref{2-eigenvalue}), this constancy implies that the corresponding functions $\bar{u}_p$ are independent of the $T^{n-m}$-direction. Consequently, $\lambda(\Lambda_p) = \lambda(M)$ for all large $p$'s. Fix such a $p$. For $g_\Lambda\leq g_{\Lambda_{p}}$, we have $\lambda(M)\geq \lambda(\Lambda)\geq \lambda(\Lambda_{p})=\lambda(M)$, and we are done.
\end{proof}

\begin{lemma}\label{lemma-stability}
    Let $(M,h)$ be as in \textbf{Theorem \ref{thm_Yamabe}}. Fix a small $\Lambda_0$ from \textbf{Lemma \ref{lemma_eigen}} so that $\lambda(\Lambda)=\lambda(M)$. Then there exists $\epsilon_0$ so that for any $t \leq \epsilon_0$, $\|u\|_{H^1(N(\Lambda))}=1$ and $\int_{N(\Lambda)}u=0$, we have
    \begin{equation}
        E_{\Lambda_0}(1+tu)\geq E_{\Lambda_0}(1)
    \end{equation}
    with equality iff $t=0$.
\end{lemma}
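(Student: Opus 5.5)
We plan a second-order Taylor expansion of $t\mapsto E_{\Lambda_0}(1+tu)$ at $t=0$, with a remainder bounded uniformly over the set $\{\|u\|_{H^1(N(\Lambda_0))}=1,\ \int_{N(\Lambda_0)}u=0\}$. Write $N=N(\Lambda_0)$, $p=\frac{2n}{n-2}$, and $R=R_h$. Since $u\equiv 1$ is a critical point and $\int_N u=0$, the first derivative vanishes. The energy is a ratio $E(1+tu)=\mathcal N(t)/\mathcal D(t)^{2/p}$. Its numerator is exactly quadratic:
\[
\mathcal N(t)=R|N|+t^2\int_N\bigl(c_n|\nabla u|^2+Ru^2\bigr),
\]
where the linear term $2tR\int u$ drops out. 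Hence all non-polynomial behaviour sits in the denominator
\[
\mathcal D(t)=\int_N|1+tu|^{p}.
\]

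We first handle $\mathcal D$ by a pointwise Taylor estimate. For $p>2$ there is the elementary inequality
\[
\Bigl||1+s|^p-1-ps-\tfrac{p(p-1)}{2}s^2\Bigr|\le C_p\bigl(|s|^3+|s|^p\bigr)\quad\text{for all } s\in\mathbb R,
\]
which we apply with $s=tu(x)$; when $p<3$ we replace $|s|^3$ by $|s|^{\min(3,p)}$. Integrating and using the Sobolev embedding $H^1(N)\hookrightarrow L^p(N)$ on the fixed compact manifold $N$ gives $\int|u|^3\le C$ and $\int|u|^p\le C$ uniformly under $\|u\|_{H^1}=1$. Since $\int u=0$, this yields
\[
\mathcal D(t)=|N|+\tfrac{p(p-1)}{2}t^2\int u^2+O(|t|^{\min(3,p)}),
\]
with the $O$ uniform in $u$.

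Next we expand $\mathcal D^{-2/p}$, multiply by $\mathcal N$, and collect terms. This reproduces the computation (\ref{2-2nd derivative}) with $m$ replaced by $n$:
\[
E(1+tu)-E(1)=\frac{4t^2}{(n-2)|N|^{(n-2)/n}}\Bigl(\int_N(n-1)|\nabla u|^2-Ru^2\Bigr)+O(|t|^{2+\delta}),
\]
for some $\delta>0$, uniformly in $u$.

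The key step is coercivity of the quadratic form. By Lemma \ref{lemma_eigen} we have $\lambda(\Lambda_0)=\lambda(M)$. Combining Lemma \ref{lemma-eigen-M} with $R>0$ and $n>m$ gives
\[
(n-1)\lambda(\Lambda_0)=(n-1)\lambda(M)>(m-1)\lambda(M)\ge R,
\]
so the inequality is strict. Poincaré's inequality for mean-zero $u$ then gives $\int|\nabla u|^2\ge\lambda\int u^2$, hence
\[
\int\bigl((n-1)|\nabla u|^2-Ru^2\bigr)\ge\Bigl(1-\frac{R}{(n-1)\lambda}\Bigr)(n-1)\int|\nabla u|^2 .
\]
Poincaré also gives $\int|\nabla u|^2\ge\frac{\lambda}{1+\lambda}\|u\|_{H^1}^2=\frac{\lambda}{1+\lambda}$. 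So the quadratic term is at least $c_0t^2$ with $c_0>0$ independent of $u$. Choosing $\epsilon_0$ so that $C|t|^{2+\delta}<c_0t^2/2$ for $0<|t|\le\epsilon_0$ gives $E(1+tu)-E(1)\ge c_0t^2/2>0$ for $t\neq0$, and equality only at $t=0$.

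The main obstacle is uniformity of the remainder over the whole $H^1$ unit sphere. This is needed because $u$ is not bounded pointwise, so the naive smooth Taylor expansion does not apply. The inequality with the $|s|^p$ term, together with Sobolev control of $\|u\|_{L^p}$, is what resolves this. Some care is needed in the low-dimensional case $p<3$, where the cubic term must be replaced by $|s|^p$ and the remainder becomes $O(|t|^{p})$ with $p>2$; this is still enough.
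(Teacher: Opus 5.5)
Your proof is correct and follows essentially the same route as the paper. Both arguments make the quadratic form $\int_N (n-1)|\nabla u|^2-R_hu^2$ positive via $\lambda(\Lambda_0)=\lambda(M)$ and $(m-1)\lambda(M)\ge R_h$, and both control the nonlinearity $|1+tu|^{2n/(n-2)}$ uniformly in $u$ by a pointwise inequality plus the Sobolev embedding. The difference is technical: the paper bounds $\partial_t^2E_{\Lambda_0}(1+tu)$ from below on all of $|t|\le\epsilon_0$ (using $|1+x|^{q-2}\le 1+\delta+C_\delta|x|^{q-2}$), whereas you expand $E_{\Lambda_0}(1+tu)$ itself with an $O(|t|^{\min(3,p)})$ remainder and use Poincar\'e to make explicit the uniform coercivity constant that the paper leaves implicit.
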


\begin{proof}
    For simplicity, let $q=\frac{2n}{n-2}$ and drop the index $\Lambda_0$ since it's fixed. Also let $\|u\|_{H_1}^2=\int_N \frac{4(n-1)}{n-2}|\nabla_N u|^2+R_hu^2$ which is equivalent to the usual one.
    \begin{equation}
        E(1+tu)=  \frac{\int_N \frac{4(n-1)}{n-2}t^2|\nabla_N u|^2+R_h(1+t^2u^2)}{(\int_{N} |1+tu|^q)^{2/q}}=\frac{f(t)}{g(t)}
    \end{equation}
    Then we have
    \begin{equation}
    \begin{aligned}
        f&=t^2+R_h|N|\\
        \frac{g'}{g}&=\frac{2}{\int_{N}|1+tu|^q}\int_{N}u|1+tu|^{q-1}\\
        (\frac{g'}{g})'&=2(q-1)\frac{\int_{N}u^2|1+tu|^{q-2}}{\int_{N}|1+tu|^q}-2q\big(\frac{\int_{N}u|1+tu|^{q-1}}{\int_{N}|1+tu|^q}\big)^2
     \end{aligned}   
    \end{equation}

    And direct computation shows that
    \begin{equation}
    \begin{aligned}
        \frac{\partial^2}{\partial t^2}E(1+tu)&=\frac{1}{g}(f''-f(\frac{g'}{g})'+f(\frac{g'}{g})^2-2\frac{g'}{g}f')\\
        &=\frac{2}{g}\bigg(1-(t^2+R_h|N|)(q-1)\frac{\int_{N}u^2|1+tu|^{q-2}}{\int_{N}|1+tu|^q}\\
        &\quad+\frac{(t^2+R_h|N|)(q+2)}{4}(\frac{g'}{g})^2-2t\frac{g'}{g}\bigg)
    \end{aligned}
    \end{equation}
    View $\frac{(t^2+R_h|N|)(q+2)}{4}(\frac{g'}{g})^2-2t\frac{g'}{g}$ as quadratic in $\frac{g'}{g}$ and it's bounded below by $-Ct^2$. By Sobolev inequality and $\|u\|_{H^1}=1$, $\int_{N}|1+tu|^1\geq |N|-C(N)t$. So
    \begin{equation}
        \frac{\partial^2}{\partial t^2}E(1+tu)
        \geq \frac{2}{g}\bigg(1-\frac{(t^2+R_h|N|)(q-1)}{|N|-C(N)t}\int_{N}u^2|1+tu|^{q-2}-C(N)t^2\bigg)\label{2-2}
    \end{equation}
    It remains to estimate $\int_{N}u^2|1+tu|^{q-2}$. Since $q-2>0$, for an arbitrary small $\delta$ we have that
    \begin{equation}
        |1+x|^{q-2}\leq |1+|x||^{q-2}\leq 1+\delta+C(n,\delta)|x|^{q-2}
    \end{equation}
  Thus applying Sobolev trace inequality
  \begin{equation}
      \int_{N}u^2|1+tu|^{q-2}\leq \int_N C(n,\delta)t^{q-2}u^q+(1+\delta)u^2\leq C(\delta,N)t^{q-2}+\int_N (1+\delta)u^2.
  \end{equation}
  Now (\ref{2-2}) becomes
  \begin{equation}
        \frac{\partial^2}{\partial t^2}E(1+tu)
        \geq \frac{2}{g}\bigg(1-\frac{(t^2+R_h|N|)(q-1)}{|N|-C(N)t}(1+\delta)\int_{N}u^2-C(N,\delta)t^{q-2}-C(N,\delta)t^2\bigg)\label{2-middle}
    \end{equation}
If $t=\delta=0$, using $\|u\|_{H^1}=\int_N \frac{4(n-1)}{n-2}|\nabla_N u|^2+R_hu^2=1$, we have

\begin{equation}
    \text{RHS of }(\ref{2-middle})\geq \frac{8}{(n-2)g}\big(\int_{N}(n-1)|\nabla_Nu|^2-R_hu^2\big)> 0
\end{equation}
We used \textbf{Lemma \ref{lemma-eigen-M},\ref{lemma_eigen}} in the above inequality. This implies that if we pick $\delta,t$ small enough, then $\frac{\partial^2}{\partial t^2}E(1+tu)>0$.
\end{proof}

\begin{remark}
    This theorem will be proved by showing that $\partial_t^2 E(1+tu)\geq 0$ for $t\leq \epsilon$. I tried to prove this result by using the weak compactness of $\{\|u\|_{H_1=1},\int u=0\}$, but $u$ might converge to 0. I think there should be a simpler way to show stability from (\ref{2-2nd derivative}).
\end{remark}

\subsection{Subconvergence to $u_0$ of small energy}\,


The convergence argument proceeds as follows. By the Concentration-Compactness Theorem, the sequence of measures $u_i^{2n/(n-2)} dv$ converges to $u^{2n/(n-2)} dv + \sum \nu^j \delta_j$ in the sense of measures. The condition of infinitely small periodicity forces the vanishing of all Dirac masses, implying that $u_i \to u$ in $L^{2n/(n-2)}$. Furthermore, this periodicity implies that the limit $u$ is independent of the $T^{n-m}$ coordinates and can therefore be regarded as a function on $M$. The minimization property of the $v_i$, combined with $L^{2n/(n-2)}$ convergence, implies that the energy of $u$ is at most that of the constant function.

In the next subsection, we will show that using the smallness of $E_{\Lambda_0}$, Lemma \ref{3-s} implies that $u_0$ must itself be constant, specifically $u \equiv 1$.

To do this we need the following concentration-compactness theorem (see \cite{SM} for example):

\begin{theorem}(\textbf{Concentration-Compactness}) . Let $(N,g)$ be a closed Riemannian manifold. Suppose $u_k \rightharpoonup u$ weakly in $H^1(N)$ and $\mu_k = |\nabla^k u_k|^2 dv \rightharpoonup \mu$, $\nu_k = |u_k|^{\frac{2n}{n-2}} dx \rightharpoonup \nu$ weakly in the sense of measures where $\mu$ and $\nu$ are bounded non-negative measures on $N$.
Then we have:

(I°) There exists some at most countable set $J$, a family $\{x^{(j)}; j \in J\}$ of distinct points in $N$, and a family $\{\nu^{(j)}; j \in J\}$ of positive numbers such that
\begin{equation}\nu = |u|^{\frac{2n}{n-2}} dv + \sum_{j \in J} \nu^{(j)} \delta_{x^{(j)}},\label{3-concentration-1}
\end{equation}
where $\delta_x$ is the Dirac-mass of mass 1 concentrated at $x \in N$.

(II°) In addition we have
\begin{equation}
\mu \geq |\nabla u|^2 dv + \sum_{j \in J} \mu^{(j)} \delta_{x^{(j)}}\label{3-concentration-2}
\end{equation}
for some family $\{\mu^{(j)}; j \in J\}$, $\mu^{(j)} > 0$ satisfying
$$S\left(\nu^{(j)}\right)^{(n-2)/n} \leq \mu^{(j)}, \quad \text{for all } j \in J.$$

In particular, $\sum_{j \in J} \left(\nu^{(j)}\right)^{(n-2)/n} < \infty$.
\end{theorem}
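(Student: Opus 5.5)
The plan is to follow Lions' original argument, run on the closed manifold $N$. I split off the defect $v_k=u_k-u$ and show that the part of $\nu$ it leaves behind is purely atomic. Throughout, write $2^*=\frac{2n}{n-2}$.

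\textbf{Step 1 (splitting off the weak limit).} Since $v_k\rightharpoonup 0$ in $H^1(N)$ and $N$ is compact, Rellich gives $v_k\to 0$ strongly in $L^2(N)$. After passing to a subsequence, $v_k\to 0$ a.e.
\begin{itemize}
\item Brezis--Lieb applied with test functions $\varphi\in C^\infty(N)$ gives $|u_k|^{2^*}dv-|v_k|^{2^*}dv\rightharpoonup |u|^{2^*}dv$. Hence, along a further subsequence, $|v_k|^{2^*}dv\rightharpoonup \omega$ and $\nu=|u|^{2^*}dv+\omega$.
\item Expand $|\nabla u_k|^2=|\nabla u|^2+|\nabla v_k|^2+2\nabla u\cdot\nabla v_k$. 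The cross term tends to $0$ against every $\varphi$, because $\varphi\nabla u\in L^2$ and $\nabla v_k\rightharpoonup 0$ in $L^2$. So $\mu=|\nabla u|^2dv+\lambda$, where $\lambda$ is the weak limit of $|\nabla v_k|^2dv$.
\end{itemize}

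\textbf{Step 2 (reverse H\"older inequality for the defect measures).} I use the Aubin--Hebey form of the Sobolev inequality on $N$: for every $\epsilon>0$ there is $B_\epsilon$ with
\[
\|w\|_{L^{2^*}}^2\le (S^{-1}+\epsilon)\|\nabla w\|_{L^2}^2+B_\epsilon\|w\|_{L^2}^2 .
\]
Apply it to $w=\varphi v_k$ with $\varphi\in C^\infty(N)$. Expanding $\nabla(\varphi v_k)=\varphi\nabla v_k+v_k\nabla\varphi$, every term containing an undifferentiated $v_k$ tends to $0$, by the strong $L^2$ convergence together with the uniform $H^1$ bound. In the limit this gives
\[
\Big(\int_N|\varphi|^{2^*}d\omega\Big)^{2/2^*}\le (S^{-1}+\epsilon)\int_N\varphi^2\,d\lambda ,
\]
and then we let $\epsilon\to0$.

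\textbf{Step 3 (atomicity of $\omega$).} This is the step I expect to be the main obstacle.
\begin{itemize}
\item First, choosing $\varphi$ to approximate indicator functions of open sets shows $\omega(E)^{2/2^*}\le S^{-1}\lambda(E)$ for every Borel set $E$. In particular $\omega\ll\lambda$, so $\omega=f\lambda$ with $f\in L^1(\lambda)$.
\item Since $\lambda(N)<\infty$, $\lambda$ has at most countably many atoms $\{x^{(j)}\}$.
\item At a point $x$ that is not an atom, $\lambda(B_r(x))\to0$. The inequality $\omega(B_r)\le S^{-2^*/2}\lambda(B_r)^{2^*/2}$ with $2^*/2>1$ then gives $\omega(B_r)/\lambda(B_r)\to0$.
\item By differentiation of measures, $f=0$ $\lambda$-a.e. off the atoms. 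Hence $\omega=\sum_j\nu^{(j)}\delta_{x^{(j)}}$ with $\nu^{(j)}=\omega(\{x^{(j)}\})$, discarding those $j$ with $\nu^{(j)}=0$. Combined with Step 1, this is (\ref{3-concentration-1}).
\end{itemize}
Differentiation of measures on a compact manifold is carried out in coordinate charts, where Besicovitch applies. That, and getting the sharp constant $S$ in Step 2 rather than some non-sharp one, are the only genuinely delicate points.

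\textbf{Step 4 (gradient atoms and summability).} Set $\mu^{(j)}=\lambda(\{x^{(j)}\})$. Step 2 applied to $\varphi\to\mathbf 1_{\{x^{(j)}\}}$ gives $S(\nu^{(j)})^{(n-2)/n}\le\mu^{(j)}$, which in particular makes $\mu^{(j)}>0$.
\begin{itemize}
\item The measure $|\nabla u|^2dv$ is absolutely continuous, so it is mutually singular with the atomic part of $\lambda$. Therefore
\[
\mu=|\nabla u|^2dv+\lambda\ge |\nabla u|^2dv+\sum_j\mu^{(j)}\delta_{x^{(j)}},
\]
which is (\ref{3-concentration-2}).
\item Finally, $S\sum_j(\nu^{(j)})^{(n-2)/n}\le\sum_j\mu^{(j)}\le\mu(N)\le\sup_k\|\nabla u_k\|_{L^2}^2<\infty$, which gives the summability statement.
\end{itemize}
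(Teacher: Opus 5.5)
Your proof is correct and follows essentially the same approach as the paper. The paper gives no proof of its own: it states the result and cites \cite{SM}, whose proof is the Lions argument you reproduce (splitting off $u_k-u$ via Brezis--Lieb, the reverse H\"older inequality for the defect measures, and atomicity via Radon--Nikodym and differentiation of measures). The only point where the closed-manifold setting differs from the Euclidean statement in that reference is keeping the sharp constant $S$ and localizing the differentiation step, and you handle both correctly: the first through Aubin's inequality $\|w\|_{L^{2^*}}^2\le (S^{-1}+\epsilon)\|\nabla w\|_{L^2}^2+B_\epsilon\|w\|_{L^2}^2$, the second through coordinate charts.
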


\begin{lemma}\label{lemma-3-convergence}
    Let \(\Lambda_k = 2^{-k} \Lambda_0\) and let \(v_k\) be a minimizer for \(Y(\Lambda_k)\). Lift \(v_k\) to a function \(u_k\) on \(N(\Lambda_0)\) via the \(k^{n-m}\)-sheeted covering map, and normalize it so that
\[
\int_{N(\Lambda_0)} u_k^{2n/(n-2)} = \operatorname{Vol}(N(\Lambda_0)).
\]
Then
\[
u_k \rightharpoonup u_0 \quad \text{weakly in } H^1(N(\Lambda_0)).
\]
Furthermore, \(u_0\) is independent of the \(T(\Lambda_0)\)-factor and may therefore be regarded as a function on \(M\). This limit function satisfies the inequality on $M$
    \begin{equation}
            \frac{\int_M|\nabla u_0|^2+\frac{n-2}{4(n-1)}R_h u_0^2dv_g}{(\int_M u_0^{2n/(n-2)})^{(n-2)/n}dv_g}\leq \frac{n-2}{4(n-1)}R_hV(M)^{2/n}\label{lemma-3-convergence-2}
    \end{equation}
where we have identified the metric on \(M\) with its pullback under the projection.
\end{lemma}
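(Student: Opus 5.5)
We first obtain a uniform $H^1$ bound for $u_k$ on $N(\Lambda_0)$. Since $v_k$ minimizes $Y(\Lambda_k)$ and $Y(\Lambda_k)\le Y(S^n)$, and since energy, volume and $L^{2n/(n-2)}$ norm of a lift all scale by the same factor $2^{k(n-m)}$ under the covering $N(\Lambda_0)\to N(\Lambda_k)$, the lift $u_k$ satisfies
\[
\int_{N(\Lambda_0)} c_n|\nabla u_k|^2+R_h u_k^2 = Y(\Lambda_k)\,\operatorname{Vol}(N(\Lambda_k))^{-2/n}2^{k(n-m)}\cdot(\text{normalizing factor}).
\]
It is cleaner to compare with the constant. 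The constant $1$ on $N(\Lambda_k)$ is a competitor, so $E_{\Lambda_k}(v_k)\le E_{\Lambda_k}(1)=R_h\operatorname{Vol}(N(\Lambda_k))^{2/n}$. Lifting, with $\operatorname{Vol}(N(\Lambda_0))=2^{k(n-m)}\operatorname{Vol}(N(\Lambda_k))$, the ratio $\int(c_n|\nabla u_k|^2+R_hu_k^2)/\|u_k\|_{q}^2$ on $N(\Lambda_0)$ equals $2^{k(n-m)(1-(n-2)/n)}E_{\Lambda_k}(v_k)$. This is at most $R_h\operatorname{Vol}(N(\Lambda_0))^{2/n}=E_{\Lambda_0}(1)$. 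Thus each lift $u_k$ has energy at most that of the constant on $N(\Lambda_0)$. With the normalization $\|u_k\|_q^q=\operatorname{Vol}(N(\Lambda_0))$, and $R_h>0$ (since $Y(M)>0$ and $h$ is Yamabe, hence of constant scalar curvature), this gives $\|u_k\|_{H^1}\le C$. Passing to a subsequence, we get $u_k\rightharpoonup u_0$ in $H^1$, strongly in $L^2$, and the measures $|\nabla u_k|^2dv$ and $u_k^q dv$ converge weakly.

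\textbf{Periodicity and no concentration.} Each $u_k$ is invariant under translations by the lattice $\Lambda_k$ in the torus factor, and so is every $u_j$ with $j\ge k$. Hence $u_0$ is invariant under $\Lambda_k$ for every $k$. Since $\bigcup_k \Lambda_k$ is dense in $\mathbb{R}^{n-m}$ and translations act continuously on $L^2$, $u_0$ is invariant under all torus translations, so it is a function on $M$. The limit measure $\nu$ is likewise invariant under the dense group $\bigcup_k\Lambda_k/\Lambda_0$. If the concentration-compactness theorem produced an atom $\nu^{(j)}\delta_{x^{(j)}}$, then the translates of $x^{(j)}$ would all carry the same mass. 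There are infinitely many such translates, contradicting $\sum_j(\nu^{(j)})^{(n-2)/n}<\infty$ from (\ref{3-concentration-1}) and (\ref{3-concentration-2}). Hence $J=\emptyset$. Then $\|u_k\|_q\to\|u_0\|_q$, and together with weak convergence in $L^q$ this gives strong convergence in $L^q$. In particular $\int_{N(\Lambda_0)}u_0^q=\operatorname{Vol}(N(\Lambda_0))$, so $u_0\ne0$.

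\textbf{Passing the energy bound to the limit.} By weak lower semicontinuity of $\int|\nabla u|^2$, strong $L^2$ convergence, and the exact convergence of the denominator,
\[
\frac{\int_{N(\Lambda_0)}c_n|\nabla u_0|^2+R_hu_0^2}{(\int_{N(\Lambda_0)}u_0^q)^{2/q}}\le R_h\operatorname{Vol}(N(\Lambda_0))^{2/n}.
\]
Since $u_0$ depends only on $M$, Fubini turns every integral into $|T(\Lambda_0)|$ times the integral over $M$. The left side then carries the factor $|T|^{1-(n-2)/n}=|T|^{2/n}$, and the right side carries $\operatorname{Vol}(N(\Lambda_0))^{2/n}=(|T|V(M))^{2/n}$. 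Cancelling $|T|^{2/n}$ and dividing by $c_n$ yields exactly (\ref{lemma-3-convergence-2}).

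\textbf{Main obstacle.} The delicate step is ruling out concentration. One must check that $\nu$ is genuinely invariant under every $\Lambda_k$-translation, which holds because weak limits of invariant measures are invariant. Then the infinite-orbit argument applies. If some atom sat at a point with finite orbit this would fail, but orbits of the dense translation group on the torus factor are infinite.
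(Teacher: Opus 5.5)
Your proposal is correct and follows essentially the same route as the paper. It compares with the constant, excludes atoms in the concentration--compactness decomposition using the arbitrarily fine periodicity together with finite mass, gets strong $L^{2n/(n-2)}$ convergence and lower semicontinuity, and reduces to $M$ by Fubini; your covering-scaling bookkeeping is correct, whereas the paper's display (\ref{3-convergence-middle1}) writes $V(N(\Lambda_0))$ where $V(N(\Lambda_k))$ is meant in the middle term. The only differences are minor: you get torus-independence of $u_0$ from invariance under the dense group $\bigcup_k\Lambda_k$ plus continuity of translations in $L^2$ rather than by mollifying in each $\theta_i$, and you get strong convergence from uniform convexity of $L^{2n/(n-2)}$ rather than Brezis--Lieb.
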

\begin{proof}
 Since each \(v_k\) is a minimizer,
\begin{equation}
    \frac{E_{\Lambda_0}(u_k)}{V(N(\Lambda_0))^{2/n}}=\frac{E_{\Lambda_k}(v_k)}{V(N(\Lambda_0))^{2/n}}\leq \frac{E_{\Lambda_k}(1)}{V(N(\Lambda_k))^{2/n}}=\frac{E_{\Lambda_0}(1)}{V(N(\Lambda_0))^{2/n}}\label{3-convergence-middle1}
\end{equation}

Recall that the functions \(u_k\) have been normalized so that $\int_{N(\Lambda_0)} u_k^{2n/(n-2)} = \operatorname{Vol}(N(\Lambda_0)).$
Hence, we may extract a subsequence such that \(u_k \rightharpoonup u_0\) weakly in \(H^1\), and the associated concentration–compactness measures \(\nu, \mu\) satisfy the usual decomposition. Because \(u_k\) is periodic with period \(\vec{a}_i/2^l\) in each direction of \(T^{n-m}\) for all \(k \ge l\), the same periodicity is inherited by the limits \(\nu, \mu, u_0\) for every \(l \in \mathbb{Z}_+\). This infinite divisibility of the period, together with the fact that \(\nu(N(\Lambda_0)) = \operatorname{Vol}(N(\Lambda_0)) < \infty\), forces the set of “concentration points’’ in (\ref{3-concentration-1}) to be empty, i.e. \(J = \emptyset\). Consequently,
\[
\|u_0\|_{L^{2n/(n-2)}} = \operatorname{Vol}(N(\Lambda_0)) = \lim_{k\to\infty} \|u_k\|_{L^{2n/(n-2)}} ,
\]
and by \cite{BE}, we obtain strong convergence
\[
u_k \to u_0 \quad\text{in } L^{2n/(n-2)} .
\]

Finally, combining this strong convergence with (\ref{3-concentration-2}) and passing to the limit in (\ref{3-convergence-middle1}) gives
\begin{equation}
E_{\Lambda_0}(u_0) \le E_{\Lambda_0}(\text{const}) .
\label{3-convergence-middle2}
\end{equation}

We now show that $u_0$ must be independent of the $T^{n-m}$ factor. Let $\theta_i$ ($i=1,\dots,n-m$) denote coordinates on the torus. For each $i$, let $\phi_\epsilon(\theta_i)$ be a mollifier in the $\theta_i$ variable, and define
\begin{equation}
u_{\epsilon}(x, \theta_1, \dots, \theta_{n-m}) = \int_{S^1} u_0(x, \theta_1, \dots, \theta_i-\xi, \dots, \theta_{n-m}) \phi_{\epsilon}(\xi)  d\xi, \quad x \in M.
\end{equation}
The function $u_\epsilon$ is smooth in $\theta_i$ and inherits the periodicity $\Lambda_0/2^k$ for every $k$, implying that $\partial u_\epsilon / \partial \theta_i = 0$ for every $\epsilon > 0$. Taking the limit $\epsilon \to 0$, we find $u_\epsilon \to u_0$ in $H^1(N(\Lambda_0))$, whence $\partial u_0 / \partial \theta_i = 0$. Therefore, $u_0$ is independent of the $T^{n-m}$ coordinates.

Now we can view $u_0$ as a function on $M$, and (\ref{lemma-3-convergence-2}) follows from (\ref{3-convergence-middle2}).

\end{proof}

\subsection{$u_0$ is constant}\,

We start with a lemma.
\begin{lemma}\label{3-s}
    \label{s_decrease}
    Let $(M,g)$ be a closed Riemannian manifold with $Vol(M)=1$ and define
    \begin{equation}
        \begin{aligned}
            F_{\lambda,q}(u)&=\frac{e(u)+\int_M \lambda u^2dv_g}{(\int_Mu^q)^{2/q}dv_g}\\
            s(\lambda,q)&=\inf_{u\in H^1,u\neq 0}F_{\lambda,q}(u)\label{3-lemmaODE-def}
        \end{aligned}
    \end{equation}
    where $\lambda>0$, $e(u)=\int_M|\nabla u|^2$ and $ 2\leq q\leq \frac{2m}{m-2}$. Suppose $s(\lambda_0,q_0)$ is achieved by constant, then $s(\lambda_1,q_1)$ is also achieved by constants for
    \begin{equation}
        \frac{q_1}{(q_1-2)\lambda_1}\geq\frac{q_0}{(q_0-2)\lambda_0},\quad q_1<q_0\label{s_decrease_condition}
    \end{equation}

\end{lemma}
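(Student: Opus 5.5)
The plan is to derive the inequality for $(\lambda_1,q_1)$ directly from the one for $(\lambda_0,q_0)$ by interpolating $L^{q_1}$ between $L^2$ and $L^{q_0}$. No variational or ODE argument is needed; the whole proof reduces to one Hölder step and one Young step, with the exponent bookkeeping reproducing exactly condition (\ref{s_decrease_condition}).

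Since $\mathrm{Vol}(M)=1$, the constant function gives $F_{\lambda,q}(1)=\lambda$, and replacing $u$ by $|u|$ does not change $e(u)$. So the hypothesis that $s(\lambda_0,q_0)$ is achieved by constants reads
\begin{equation*}
e(u)+\lambda_0\|u\|_2^2\ \geq\ \lambda_0\|u\|_{q_0}^2 \qquad \text{for all } u\in H^1(M),
\end{equation*}
and the goal is the same inequality with $(\lambda_1,q_1)$ in place of $(\lambda_0,q_0)$. If $q_1=2$ the goal is trivial because $e(u)\geq 0$, so assume $2<q_1<q_0$.

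\textbf{Step 1 (Hölder interpolation).} Define $\theta\in(0,1)$ by $\frac{1}{q_1}=\frac{1-\theta}{2}+\frac{\theta}{q_0}$. Solving gives
\begin{equation*}
\theta=\frac{(q_1-2)\,q_0}{(q_0-2)\,q_1}.
\end{equation*}
Hölder's inequality then gives $\|u\|_{q_1}^2\leq \|u\|_2^{2(1-\theta)}\|u\|_{q_0}^{2\theta}$.

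\textbf{Step 2 (Young's inequality).} The weighted arithmetic–geometric mean inequality gives
\begin{equation*}
\|u\|_2^{2(1-\theta)}\|u\|_{q_0}^{2\theta}\leq (1-\theta)\|u\|_2^2+\theta\|u\|_{q_0}^2 .
\end{equation*}

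\textbf{Step 3 (insert the hypothesis).} Multiply by $\lambda_1$ and use the hypothesis on the $\|u\|_{q_0}^2$ term:
\begin{equation*}
\lambda_1\|u\|_{q_1}^2\leq \lambda_1(1-\theta)\|u\|_2^2+\frac{\lambda_1\theta}{\lambda_0}\bigl(e(u)+\lambda_0\|u\|_2^2\bigr)=\lambda_1\|u\|_2^2+\frac{\lambda_1\theta}{\lambda_0}\,e(u).
\end{equation*}

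\textbf{Step 4 (the exponent check).} The desired inequality $\lambda_1\|u\|_{q_1}^2\leq e(u)+\lambda_1\|u\|_2^2$ follows provided $\lambda_1\theta\leq\lambda_0$. Substituting the value of $\theta$, this condition is
\begin{equation*}
(q_1-2)\,q_0\,\lambda_1\leq (q_0-2)\,q_1\,\lambda_0,
\end{equation*}
which is precisely (\ref{s_decrease_condition}). Hence $F_{\lambda_1,q_1}(u)\geq\lambda_1=F_{\lambda_1,q_1}(1)$ for every $u\neq 0$, so $s(\lambda_1,q_1)$ is achieved by constants.

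The only real obstacle is this exponent identity: one must get $\theta$ right and confirm that it matches the stated condition. No compactness or regularity is needed, so the range restriction $q\leq \frac{2m}{m-2}$ enters only to ensure that $s$ is finite and meaningful.
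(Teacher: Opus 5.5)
Your proof is correct, and it takes a genuinely different and much shorter route than the paper.

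The paper argues by contradiction:
\begin{enumerate}
\item it takes a non-constant minimizer $u$ at $(\lambda_1,q_1)$;
\item it differentiates $F_{\lambda,q(\lambda)}(u)$ along the level curve $\frac{q}{q-2}=C\lambda$ of the quantity in (\ref{s_decrease_condition});
\item it uses strict convexity of $x\mapsto\int_M u^x$ to argue that $s-\lambda$ decreases as $q$ increases along that curve;
\item it transports $s<\lambda$ to a point $(\lambda_2,q_0)$ with $\lambda_2\le\lambda_0$;
\item it finishes with concavity of $s$ in $\lambda$.
\end{enumerate}
Your H\"older--Young step is the integrated, one-shot form of the same log-convexity of $L^p$ norms. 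The level curve reappears as the equality case of $\lambda_1\theta\le\lambda_0$. The concavity step is absorbed because your condition is one-sided in $\lambda_1$.

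What your route buys:
\begin{itemize}
\item It needs no minimizers at all. So it needs no compactness, and no special care at the critical endpoint $q_0=\frac{2m}{m-2}$, which is exactly the case used in Lemma \ref{u0-const}.
\item It needs no passage from a one-sided derivative bound to monotonicity.
\item It avoids the paper's deduction of (\ref{s_derivative}) from (\ref{s_derivative_2}) via $s(\lambda_1,q_1)\le\lambda_1$. Because $q'<0$ on the curve, that deduction runs in the wrong direction unless $s=\lambda$ at the point in question. Tracking $s/\lambda$ instead of $s-\lambda$ would repair it.
\end{itemize}
The paper's version mainly offers the heuristic picture of deforming along these curves.

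One addition is worthwhile. Lemma \ref{u0-const} uses that constants are the \emph{unique} minimizers of $F_{\lambda_1,q_1}$, and your chain gives this at once. If $F_{\lambda_1,q_1}(u)=\lambda_1$, every inequality in Steps 1--4 is an equality. Equality in Young's inequality with $0<\theta<1$ forces $\|u\|_2=\|u\|_{q_0}$. Because $\mathrm{Vol}(M)=1$, this means $|u|$ is a.e.\ constant. An $H^1$ function with constant modulus on a connected $M$ is constant.
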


\begin{proof}

We argue by contradiction. Because $\text{Vol}(M)=1$, a constant function fails to be a minimizer for $s(\lambda,q)$ if and only if $s(\lambda,q) - \lambda < 0$.

\textbf{Claim:} Starting from arbitrary $(\lambda_1,q_1)$ with $q_1>2,\lambda_1>0$, the quantity $s(\lambda,q) - \lambda$ is non-increasing along the curve
\begin{equation}
    \frac{q}{q-2} = C\lambda \label{3_ODE_solution}
\end{equation}
in the direction of increasing $q$, and is strict decreasing at points where $s(\lambda,q)$ admits a non-constant minimizer. Here the constant $C = \frac{q_1+1}{(q_1-1)\lambda_1 }$ is chosen so that $(\lambda_1,q_1)$ lies on the curve.

If the claim holds, then the existence of a non‑constant minimizer for \(s(\lambda_1,q_1)\) implies \(s(\lambda,q) < \lambda\) for every point \((\lambda,q)\) on the curve (\ref{3_ODE_solution}) with \(q > q_1\). Now choose \(\lambda_2\) so that \((\lambda_2,q_0)\) lies on the same curve, (so $s(\lambda_2,q_0)<\lambda_2$) and condition (\ref{s_decrease_condition}) gives \(\lambda_0 \ge \lambda_2\).For each fixed \(u\), the functional \(F_{\lambda,q}(u)\) is linear in \(\lambda\); taking the infimum therefore shows that \(s(\lambda,q)\) is concave in \(\lambda\). Therefore \(s(\lambda_2,q_0) < \lambda_2\) forces \(s(\lambda_0,q_0) < \lambda_0\), which says $s(\lambda_0,q_0)$ is not minimizes bo constant, contradicting the original assumption.

     It remains to prove the claim. Along the curve (\ref{3_ODE_solution}), we can view $s,q$ as functions of $\lambda$ only, i.e. $s(\lambda),q(\lambda)$. Let $u$ be minimizer (if it admits both constant and non-constant minimizers, we pick the non-constant one) for $s(\lambda_1,q_1)$ with normalization $\int u^{q_1}=1$, and we have $s(\lambda,q(\lambda))\leq E_{\lambda,q(\lambda)}(u)$ with equality at $(\lambda_1,q_1)$. And we plan to prove the following:\\
\begin{equation}
    \frac{\partial}{\partial (-\lambda)}(E_{\lambda,q(\lambda)}(u)-\lambda)\big|_{\lambda_1}\leq 0\label{s_middle0}
\end{equation}
with equality iff $u\equiv 1$. One calculates that\\
\begin{equation}
    \frac{\partial}{\partial q}\left(\int_{M} u^{q}\right)^{\frac{2}{q}}=(\int_{M} u^{q})^{\frac{2}{q}}[-\frac{2}{(q)^2}\log \int_{M} u^{q}+\frac{2}{q}\frac{\int_{M} u^{q}\log u}{\int_{M} u^{q}}]\notag
\end{equation}
We assumed that $\int_{M}u^{q_1}=Vol(M)=1$, so $\frac{\partial }{\partial q}(\int_Mu^q)^{2/q}|_{q_1}=\frac{2}{q}\int_Mu^{q_1}\log u$, and consequently\\
\begin{equation}
    \frac{\partial}{\partial \lambda}E_{\lambda,q(\lambda)}(u)\big|_{\lambda_1}
    =\int_{M} u^2-\frac{2s(\lambda_1)}{q_1}q'(\lambda_1)\int_{M} u^{q_1}\log u\notag
\end{equation}
So it suffices to show\\
\begin{equation}
    \int_{M} u^2-\frac{2s(\lambda_1)}{q_1}q'(\lambda_1)\int_{M} u^{q_1}\log u\geq 1\label{s_derivative}
\end{equation}
with equality iff $u\equiv 1$. By H\"older inequality and $Vol(M)=\int_{M} u^{q_1}=1$, we have $\int_{M} u^{q_1+\delta}\geq 1$ for $\delta>0$ and it follows that $\int_{M} u^{q_1}\log u\geq 0$. Also note that $\int_{M} u^x$ is a strict convex function in $x$ for $x>0$, we have\\
\begin{equation}
    \begin{aligned}
        &\int_Mu^{q_1}-\int_Mu^2\leq (q_1-2)\int_M u^{q_1}\log u\\ 
    \Longrightarrow &\int_{M} u^2+(q_1-2)\int_{M} u^{q_1}\log u\geq \int_{M} u^{q_1}=1 \label{s_derivative_2}
    \end{aligned}
\end{equation}
Since $s(\lambda_1,q_1)\leq \lambda_1$, (\ref{s_derivative}) follows from (\ref{s_derivative_2}) provided\\
\begin{equation}
    \frac{-2q'(\lambda)\lambda}{q}\geq q-2\notag
\end{equation}
The solution for the equality is exactly (\ref{3_ODE_solution}). This computation is carried out at $(\lambda_1,q_1)$, but this works at all points, and we are done.
\end{proof}
\begin{remark} 

The lemma is motivated by the following observation. Since \(L^{q_1} \hookrightarrow L^{q_0}\) for \(q_1 > q_0\), controlling \(s(\lambda, q)\) becomes more difficult as \(q\) increases. This challenge is particularly acute at the critical exponent \(q = \frac{2n}{n-2}\) due to the loss of compactness. Therefore, if one has good estimates for \(s(\lambda_0, q_0)\), it is reasonable to expect they can be transferred to obtain estimates for \(s(\lambda_1, q_1)\) when \(q_1 < q_0\) for a suitable \(\lambda_1\).

From this viewpoint, the condition \(q_1 \leq q_0\) in (\ref{s_decrease_condition}) is essential, which is also reflected in the proof. In deriving (\ref{s_middle0}), the derivative was taken in the direction of decreasing \(\lambda\) (equivalently, increasing \(q\)). Taking the derivative in the opposite direction would yield a lower bound for \(E_{\lambda,q}\), which is not useful as we only possess the upper bound \(s(\lambda, q) \leq E_{\lambda,q}(u)\).

Finally, we note that the inequality (\ref{s_decrease_condition}) is not sharp, at least for spheres. For instance, \cite{BF} shows that on \(S^n\), the function \(s(\lambda,q)\) is minimized by constants provided that \(\lambda(q-2) \leq n\).
\end{remark}

\begin{remark}\label{choice-energy}
We note that the expression \(e(u)\) will differ in later sections: for the \(Q\)-curvature problem the exponent satisfies \(2 \leq q \leq 2m/(m-4)\), and for the type II Yamabe problem it takes the form \(e(u)\int_X |\nabla_X u|^2\) where \(M = \partial X\). In all cases, however, the proof proceeds identically and can be presented in a unified way.
\end{remark}

With the help of the previous lemma, we can shows that $u_0=const$.
\begin{lemma}\label{u0-const}
    $u_0$ from \textbf{Lemma \ref{lemma-3-convergence}} is const.
\end{lemma}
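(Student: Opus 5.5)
We plan to apply Lemma \ref{3-s} on $(M,h)$, after normalizing the volume, with $(\lambda_0,q_0)$ taken from the Yamabe problem on $M$ and $(\lambda_1,q_1)$ taken from the reduced functional appearing in (\ref{lemma-3-convergence-2}). By scaling $h$ we may assume $\mathrm{Vol}(M)=1$; this rescales $R_h$ by a positive constant and does not change which functions are minimizers. Dividing the Yamabe functional of $M$ by $c_m$, we see that $u\equiv1$ minimizing $Y(M,[h])$ means precisely that
\[
s(\lambda_0,q_0)\ \text{is achieved by constants},\qquad \lambda_0=\frac{m-2}{4(m-1)}R_h,\quad q_0=\frac{2m}{m-2}.
\]
Here $\lambda_0>0$ because $Y(M,[h])>0$ and $h$ is a Yamabe metric, so $R_h>0$. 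For $m=2$ one uses instead the limiting form of the hypothesis, in which every $q_0<\infty$ is admissible.

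Next, the left-hand side of (\ref{lemma-3-convergence-2}) is exactly $F_{\lambda_1,q_1}(u_0)$ with
\[
\lambda_1=\frac{n-2}{4(n-1)}R_h,\qquad q_1=\frac{2n}{n-2},
\]
and its right-hand side, with $V(M)=1$, equals $\lambda_1=F_{\lambda_1,q_1}(1)$. So (\ref{lemma-3-convergence-2}) says that $u_0$ does at least as well as the constant. Since $n>m$, we have $q_1<q_0$. It remains to check the condition (\ref{s_decrease_condition}). We compute $\frac{q}{q-2}=\frac{2n/(n-2)}{4/(n-2)}=\frac n2$, which gives
\[
\frac{q_1}{(q_1-2)\lambda_1}=\frac{n}{2}\cdot\frac{4(n-1)}{(n-2)R_h}=\frac{2n(n-1)}{(n-2)R_h},\qquad
\frac{q_0}{(q_0-2)\lambda_0}=\frac{2m(m-1)}{(m-2)R_h}.
\]
The function $x\mapsto \frac{x(x-1)}{x-2}=x+1+\frac{2}{x-2}$ is increasing for $x\ge 4$. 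The cases $m=2,3$ need a separate look, for example by using that smaller $q_0$ is allowed or by a direct comparison, and I expect this small verification to be a minor nuisance rather than a real obstacle. Granting it, Lemma \ref{3-s} shows that $s(\lambda_1,q_1)$ is achieved by constants.

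Finally, we need that $u_0$ is actually constant, not merely that some minimizer is constant. Since $u_0\neq0$ (its $L^{q_1}$ norm is normalized) and $F_{\lambda_1,q_1}(u_0)\le\lambda_1=s(\lambda_1,q_1)$, $u_0$ is itself a minimizer of $s(\lambda_1,q_1)$. The main point is then the strictness in the claim inside the proof of Lemma \ref{3-s}: if $s(\lambda_1,q_1)$ admits a non-constant minimizer, then $s-\lambda$ strictly decreases along the curve (\ref{3_ODE_solution}). In that case $s(\lambda_2,q_0)<\lambda_2$, and by concavity in $\lambda$ we get $s(\lambda_0,q_0)<\lambda_0$, which contradicts the hypothesis that $h$ is a Yamabe metric.

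To apply this, I will rerun that argument with $u_0$ (or $|u_0|$, which has no larger energy) as the chosen minimizer, and use that equality in (\ref{s_derivative_2}) forces $u_0$ to be constant by strict convexity of $x\mapsto\int u^x$. Hence $u_0$ is constant, and the normalization $\int u_0^{q_1}=1$ then gives $u_0\equiv1$.
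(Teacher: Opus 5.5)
Your proposal follows the paper's proof: rescale to unit volume, apply Lemma \ref{3-s} with the same $(\lambda_0,q_0)$ and $(\lambda_1,q_1)$, and get uniqueness from the strictness in the claim inside its proof (the paper uses this tacitly when it calls $u\equiv 1$ the unique minimizer); the $m=3$ check you defer is immediate from $\frac{2n(n-1)}{n-2}-\frac{2m(m-1)}{m-2}=\frac{2(n-m)(mn-2m-2n+2)}{(m-2)(n-2)}\ge 0$ for all $n>m\ge 3$ (for $m=3$ it equals $\frac{2(n-3)(n-4)}{n-2}$), whereas your $m=2$ remark is unsupported and should be dropped, since the paper only argues for $m\ge 3$. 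One caution on the step you call the main point: the passage from (\ref{s_derivative_2}) to (\ref{s_derivative}) in that proof actually requires $s(\lambda,q)\ge\lambda$, so it is valid only where constants minimize and does not by itself propagate $s<\lambda$ along the curve; a safer justification is that on unit-volume $M$ the map $q\mapsto\frac{q}{q-2}\bigl(\|u_0\|_{L^q}^2-\|u_0\|_{L^2}^2\bigr)$ is strictly increasing for non-constant $u_0\ge 0$ (by H\"older, $t\mapsto\|u_0\|_{L^{1/t}}^2$ is convex, strictly unless $u_0$ is constant), and together with (\ref{s_decrease_condition}) this shows that a non-constant $u_0$ satisfying (\ref{lemma-3-convergence-2}) would have $F_{\lambda_0,q_0}(u_0)<\lambda_0$, contradicting that $h$ is a Yamabe metric.
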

\begin{proof}
We will apply \textbf{Lemma \ref{3-s}}. For this purpose, we scale the metric so that $\text{Vol}(M, \bar{h}) = 1$, where $\bar{h}$ is a constant multiple of $h$. Let $\bar{F}_{\lambda,q}$ and $\bar{s}(\lambda,q)$ denote the quantities defined in (\ref{3-lemmaODE-def}) for the scaled metric $\bar{h}$. A direct computation shows that
\begin{equation}
F_{\lambda,q}(u) = \text{Vol}(M, h)^{\frac{n-2}{n} - \frac{2}{q}} \, \bar{F}_{\text{Vol}(M,h)^{2/n}\lambda,q}(u).
\end{equation}
Consequently, $s(\lambda,q)$ is achieved by constant functions if and only if $\bar{s}(\text{Vol}(M,h)^{2/n}\lambda,q)$ is. Since multiplying $\lambda$ by $\text{Vol}(M,h0)^{2/n}$ does not affect condition (\ref{s_decrease_condition}), the lemma remains valid for the original manifold $(M,h)$.

Since we assume \(Y(M,h)\) is achieved by a constant function, it follows that \(F_{\lambda_0,q_0}\) is achieved by constants for the parameters
\[
(\lambda_0,q_0)=\left(\frac{m-2}{4(m-1)}R_h, \frac{2m}{m-2}\right).
\]

Now consider the parameters
\[
(\lambda_1,q_1)=\left(\frac{n-2}{4(n-1)}R_h, \frac{2n}{n-2}\right).
\]
A direct computation shows that for $n>m\geq 3$
\begin{equation}
    \frac{q_1}{(q_1-2)\lambda_1}-\frac{q_0}{(q_0-2)\lambda_0}=\frac{2(n-m)(mn - 2m - 2n + 2)}{(m - 2)(n - 2)}\geq 0\notag
\end{equation}
so condition (\ref{s_decrease_condition}) holds. Consequently, \(u \equiv 1\) is the unique minimizer of \(F_{\lambda_1,q_1}\).

Moreover, by (\ref{lemma-3-convergence-2}), we have \(F_{\lambda_1,q_1}(u) \leq F_{\lambda_1,q_1}(\text{const})\). This forces \(u_0 \equiv 1\) and implies that the inequality in (\ref{3-concentration-2}) is in fact an equality. Therefore, \(u_k \to 1\) in \(H^1(N(\Lambda_0))\).
\end{proof}



With there preparation lemmas, \textbf{Theorem \ref{thm_Yamabe}} follows easily.

\noindent\textit{Proof of \textbf{Theorem \ref{thm_Yamabe}}}:

Since a subsequence of \(u_k\) (still denoted by \(u_k\) for simplicity) converges to \(u_0 \equiv 1\), it must eventually lie within the \(\epsilon_0\)-neighborhood described in \textbf{Lemma \ref{lemma-stability}}. Together with (\ref{3-convergence-middle1}), this implies that \(u_k \equiv 1\) for all sufficiently large \(k\).

So far we have proved that for arbitrary $\Lambda$, there exist infinitely many $k$ such that $Y(\frac{1}{2^k}\Lambda)$ is achieved by a constant metric. Fix one such $k$ and set $\Lambda_1 = \frac{1}{2^k}\Lambda$; then $Y(\Lambda_1)$ is achieved by a constant metric. To complete the proof, we need to show that for any $g_{\Lambda_2}$ with $g_{\Lambda_2} < g_{\Lambda_1}$, $Y(\Lambda_2)$ is also achieved by a constant metric.

Using isometry (\ref{isometry}), $E_{\Lambda}(u)$ can be rewriten as
\begin{equation}
    \begin{aligned}
       \frac{E_{\Lambda}(u)}{V(T(\Lambda))^{2/n}}=& \frac{\int_{N(\Lambda)}\frac{4(n-1)}{n-2} (|\nabla_M u|^2+|\nabla_Tu^2|)+R_hu^2dv_{N(\Lambda)}}{(\int_{N(\Lambda)} u^{\frac{2n}{n-2}})^{\frac{n-2}{n}}V(T(\Lambda))^{2/n}}\\
        =& \frac{\int_M\int_{I^{n-m}}\frac{4(n-1)}{n-2}(
        g_\Lambda^{ij}u_iu_j+|\nabla_M u|^2)+R_hu^2dxdv_M}{(\int_M\int_{I^{n-m}}u^{\frac{2n}{n-2}})^{(n-2)/n}}\label{3-E-comparison}
    \end{aligned}
\end{equation}
where $\{x_i\}$ are coordinates for $I^{n-m}$ and $u_i=\frac{\partial u}{\partial x_i}$. This implies
\[\frac{E_{\Lambda_2}(u)}{V(T(\Lambda_2))^{2/n}}\geq \frac{E_{\Lambda_1}(u)}{V(T(\Lambda_1))^{2/n}}\]
if $g_{\Lambda_2}< g_{\Lambda_1}$, thus $Y(\Lambda_2)$ is also achieved by constant if $Y(\Lambda_1)$ is for any $g_{\Lambda_2}\leq g_{\Lambda_1}$. Furthermore, $u=1$ is the only minimizer for $Y(\Lambda_2)$ if $g_{\Lambda_2}< g_{\Lambda_1}$.

\noindent\textit{Proof of Theorem \ref{thm_I II}}:

The proof for Type I Yamabe problem is exactly the same. For Type II Yamabe problem direct computation shows that stability is related to Steklov eigenvalue, combine with Remark \ref{choice-energy} and the proof generalize directly.
\qed

\section{Q-curvature}

Let $N(\Lambda),g(\Lambda)$ defined as before, and next we will prove \textbf{Theorem \ref{thm-Q}}

\begin{proof}
    During this proof, quantities with subindex $M$, $N$, $T$ denotes those for $M$ $N(\Lambda)$ and $T^{n-m}$ respectively.

    Direct computation show that on $(M,h)$:
 $$
\begin{aligned}
    &J_M=\frac{m}{2} \quad A_M=\frac{1}{m-2}\left((m-1)-\frac{m}{2}\right) h=\frac{h}{2}\\
Q_M & =-2 m \cdot\left(\frac{1}{2}\right)^2+\frac{m^3}{8}=\frac{m}{8}\left(m^2-4\right) \\
P_M \varphi & =\Delta_M^2 \varphi+\operatorname{div}_M\left(2 \nabla_M \varphi-\frac{m(4-2)}{2} \nabla_M \varphi\right)+\frac{1}{16} m\left(m^2-4\right)(m-4) \varphi \\
& =\Delta_M^2 \varphi+\operatorname{div}_M\left(-\left(\frac{m^2-2 m-4}{2}\right) \nabla_M \varphi\right)+\frac{1}{16} m\left(m^2-4\right)(m-4) \varphi \\
\int_M P_M \varphi \cdot \varphi & =\int_M(\Delta_M \varphi)^2+\frac{m^2-2 m-4}{2}|\nabla_M \varphi|^2+\frac{1}{16} m\left(m^2-4\right)(m-4) \varphi^2
\end{aligned}
$$
and on $N(\Lambda)$

 $\begin{aligned}J_N&=\frac{m(m-1)}{2(n-1)} \\
    A_N&=\frac{1}{n-2}\left(\begin{array}{cc}{\left[m-1-\frac{m(m-1)}{2(m-1)}\right] h} & 0 \\ 0 & -\frac{m(m-1)}{2(m-1)} g_T\end{array}\right).\\
    &=\left(\begin{array}{cc}\frac{(m-1)(2 n-m-2)}{2(m-1)(n-2)} h & 0 \\ 0 & \frac{-m(m-1)}{2(n-1)(n-2)} g_T\end{array}\right) 
\end{aligned}$

   $  \begin{aligned} Q_N&=-2|A_N|^2+\frac{n}{2} J_N^2 \\ & =\frac{m(m-1)^2}{8(n-1)^2(n-2)^2}\left(m n^3-4 m n^2-16 n^2+16 m n+32 n-16 m-16\right)\end{aligned}$

For $i=1, \cdots, m,$

$$
\begin{aligned}
    & \quad 4 A_N\left(\nabla \varphi, e_i\right)-(n-2)J_N \varphi_i\\
& =\left[4 \cdot \frac{(m-1)(2 n-m-2)}{2(n-1)(m-2)}-\frac{m(m-1)(n-2)}{2(m-1)}\right] \varphi_i \\
& =\frac{m-1}{2(n-1)(n-2)}\left(-m n^2+4 m n+8 n-8 m-8\right) \varphi_i\\
&\coloneqq -B \varphi_i
\end{aligned}
$$
\\

For $ i=m+1,\cdots,n$ 
$$
\begin{aligned}
&4A_N\left(\nabla \varphi, e_i\right)-(n-2) J_N \varphi_i \\
&= \frac{-m(m-1)}{2(n-1)(n-2)}\left(n^2-4 n+8\right) \varphi_i\coloneqq-C \varphi_i 
\end{aligned}
$$

As a result

$$
\int_{N(\Lambda)} P_N \varphi \cdot \varphi=\int(\Delta_N \varphi)^2+B\left|\nabla_M \varphi\right|^2+C\left|\nabla_T \varphi\right|^2+\frac{n-4}{2}Q_N \varphi^2
$$

\noindent\textit{i) Stability}

Direct computation shows that
\begin{equation}
    \begin{aligned}
        \frac{\partial^2}{\partial t^2}E_M(1+tu)&=2(\int_M(\Delta_M u)^2+\frac{m^2-2 m-4}{2}|\nabla_M u|^2-4 Q_M u^2)\\
        &\text{where} \int_M u=0,\quad \|u\|_{L(M)^{2m/(m-4)}}=1\\
        \frac{\partial^2}{\partial t^2}E_N(1+tu)&=2(\int_M(\Delta_N u)^2+B\left|\nabla_M \varphi\right|^2+C\left|\nabla_T \varphi\right|^2-4 Q_N u^2)\\
        &\text{where} \int_{N(\Lambda)} u=0,\quad \|u\|_{L(N(\Lambda))^{2n/(n-4)}}=1\\
    \end{aligned}
\end{equation}

We have the $C>B>\frac{m^2-2m-4}{2}$ and $Q_M> Q_N$ (we leave the proof to \textbf{Lemma \ref{Q-coeff-compare}} in the end of this section). So to show $\frac{\partial^2}{\partial t^2}E_N(1+tu)\geq 0$, it suffices to show that 
\begin{equation}
\int_{N(\Lambda)}(\Delta_N u)^2+\frac{m^2-2 m-4}{2}|\nabla_N u|^2-(\frac{4(m-2)}{m-4}Q_M-\epsilon) u^2\geq 0\label{Q-2ndvariation-middle}
\end{equation}
where $\epsilon$ is small number chosen so that $\frac{m-2}{m-4}Q_M-\epsilon> \frac{n-2}{n-4}Q_N$

The left-hand side of this inequality can be expressed as a bilinear form. Since this form vanishes for pairs of distinct eigenfunctions of $\Delta_N$, it suffices to verify the inequality for eigenfunctions. The same reasoning applies to the expression $\frac{\partial^2}{\partial t^2}E_M(1+tu)$.

Since $Y_4(M,g)$ is minimized by constant, we have $\frac{\partial^2}{\partial t^2}E_M(1+tu)\geq 0$. Let $\lambda_M$ be the 1st eigenvalue for $\Delta_M$, we have 
$$\lambda_M^2+\frac{m^2-2 m-4}{2}\lambda_M\geq \frac{4(m-2)}{m-4}Q_M$$
For small $\Lambda$, we have $\lambda(N(\Lambda))=\lambda_M$; consequently,

$$\lambda(N(\Lambda))^2+\frac{m^2-2 m-4}{2}\lambda (N(\Lambda))\geq \frac{4(m-2)}{m-4}Q_M+\epsilon$$
and (\ref{Q-2ndvariation-middle}) is proved. Then a similar argument as in \textbf{Lemma \ref{lemma-stability}} shows that constant functions are stable critical point for $E_{N(\Lambda)}$.

\noindent \textit{ii) Convergence to a function $u_0$ of small energy}

This part is the same as before, and we get a limiting $u_0$ satisfying
\begin{equation}
    \frac{\int_M|\Delta_M u_0|^2+B|\nabla_M u_0|^2+\frac{n-2}{2}Q_N u_0^2dv_g}{(\int_Mu^{2n/(n-4)})^{(n-4)/n}dv_g}\leq \frac{n-2}{2}Q_NVol(M)^{4/n}\label{Q-smallness}
\end{equation}

\noindent \textit{iii)$u_0=1$}

Consider
    \begin{equation}
        \begin{aligned}
            F_{\lambda,q}(u)&=\frac{\int_M|\Delta u|^2+\frac{m^2-2m-4}{2}|\nabla u|^2+\lambda u^2dv_g}{(\int_Nu^q)^{2/q}dv_g}\\
            s(\lambda,q)&=\inf_{u\in H^1,u\neq 0}F_{\lambda,q}(u)\label{3-lemmaODE-def}
        \end{aligned}
    \end{equation}

The assumption that \((M,g)\) minimizes \(Y_4(g)\) implies that the constant function \(u \equiv 1\) minimizes \(F(\lambda,q)\) for the parameters \((\frac{m-2}{2}Q_M, \frac{2m}{m-4})\). We will show that, under the dimension restriction, \(F(\lambda,q)\) is also minimized by constants for the parameters \((\frac{n-2}{2}Q_N, \frac{2n}{n-4})\). Assuming this holds, note that \(\frac{m^2-2m-4}{2} \leq B\) by \textbf{Lemma \ref{Q-coeff-compare}}. Then inequality (\ref{Q-smallness}) forces \(u_0\) to be constant, completing the proof.

To apply \textbf{Lemma \ref{3-s}}, let \(\bar{g}\) be the volume-normalized metric derived from \(g\), and define
\begin{equation}
e(u) = \frac{1}{\text{Vol}(M)^{4/n}}|\bar{\Delta}_M u|^2 + \frac{m^2-2m-4}{2\,\text{Vol}(M)^{2/n}}|\bar{\nabla}_M u|^2. \notag
\end{equation}
Thus it remains only to verify that \(\frac{n-4}{n}Q_N < \frac{m-4}{m}Q_M\), which is also provided in \textbf{Lemma \ref{Q-coeff-compare}}.

Statements (i), (ii), and (iii) together imply that for any \(\Lambda\), the functional \(Y_4\bigl(N(\frac{1}{2^k}\Lambda),g(\frac{1}{2^k}\Lambda)\bigr)\) is minimized by a constant function when \(k\) is sufficiently large. A comparison argument analogous to the one used near (\ref{3-E-comparison}) will complete the proof. Once more, employing the isometry (\ref{isometry}) and writing \(u_{ij}=\frac{\partial^2 u}{\partial x_i\partial x_j}\), we have
\begin{equation}
    \begin{aligned}
        &\quad\frac{1}{V(\Lambda)}\int_{N(\Lambda)}|\Delta_N u|^2dv_{N(\Lambda)}\\
        &=\int_M\int_{T(\Lambda)}(\Delta_M u+g_{\Lambda}^{ij}u_{ij})^2dxdv_M\\
        &=\int_M\int_{T(\Lambda)}(\Delta_M u)^2+2g_{\Lambda}^{ij}u_{ij}\Delta_M u +g_\Lambda^{ij}g_{\Lambda}^{kl}u_{ij}u_{kl} dxdv_M\\
        &=\int_M\int_{T(\Lambda)}(\Delta_M u)^2+2g_{\Lambda}^{ij}\langle  \nabla_M u_i,\nabla_M u_j \rangle_M+g_\Lambda^{ij}g_{\Lambda}^{kl}u_{ik}u_{jl}dxdv_M
    \end{aligned}
\end{equation}
where we use integration by parts in the last step. To analyze the term \(g_{\Lambda}^{ij}\langle  \nabla_M u_i,\nabla_M u_j \rangle_M\), we choose a local frame \(\{e_\alpha\}\) for \(M\). Then
\[
g_{\Lambda}^{ij}\langle  \nabla_M u_i,\nabla_M u_j \rangle_M
= \sum_{\alpha} g_{\Lambda}^{ij} u_{\alpha i}u_{\alpha j},
\]
which is increasing with respect to the matrix \(g_{\Lambda}^{ij}\).

For the term \(g_\Lambda^{ij}g_{\Lambda}^{kl}u_{ik}u_{jl}\), we note its invariance under \(O(n-m)\). Hence we may assume \(g_{\Lambda}^{-1}\) is diagonal, with eigenvalues \(\lambda_1,\dots,\lambda_{n-m}\). In this basis,
\[
g_\Lambda^{ij}g_{\Lambda}^{kl}u_{ik}u_{jl}
= \sum_{i,j} \lambda_i\lambda_j u_{ij}^2,
\]
which is also increasing in \(g_{\Lambda}^{-1}\). The same comparison argument used near (\ref{3-E-comparison}) therefore applies.

\end{proof}

\begin{lemma}\label{Q-coeff-compare}
    For dimensions \(5 \leq m < n\), the following inequalities hold:
    \begin{equation}
        \begin{aligned}
            &C>B>\frac{m^2-2m-4}{2}\\
            &Q_M> Q_N
        \end{aligned}
    \end{equation}

  And for \(m \geq 6\) or \(n \geq 12\), we have
  \begin{equation}
    \frac{n-4}{n}Q_N<\frac{m-4}{m}Q_M
  \end{equation}
\end{lemma}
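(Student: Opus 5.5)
The plan is to recompute every quantity directly from the definitions in (\ref{Q-def}) and then reduce each claimed inequality to a polynomial inequality in $(m,n)$. I will not rely on the displayed formulas for $A_N, Q_N, B, C$, since those appear to contain typographical slips such as $\frac{m(m-1)}{2(m-1)}$. With $Ric_h=(m-1)h$ we have $R_N=R_M=m(m-1)$ and $J_N=\frac{m(m-1)}{2(n-1)}$. The Schouten tensor is block diagonal:
\[
A_N=a\,h\oplus b\,g_T,\qquad a=\frac{(m-1)-J_N}{n-2},\qquad b=-\frac{J_N}{n-2}.
\]
From this,
\[
B=(n-2)J_N-4a,\qquad C=(n-2)J_N-4b,
\]
\[
Q_N=-2\bigl(m a^2+(n-m)b^2\bigr)+\tfrac n2 J_N^2 .
\]
A useful sanity check: setting $n=m$ gives $J=\frac m2$ and $a=\frac12$. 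This recovers $Q_M=\frac m8(m^2-4)$ and $B|_{n=m}=\frac{m^2-2m-4}{2}$. Each of the first two comparisons is therefore a monotonicity statement in $n$, and I will treat $n\ge m$ as a real variable.

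\textbf{Step 1: $C>B$.} Here $C-B=4(a-b)=\frac{4(m-1)}{n-2}>0$, so this is immediate.

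\textbf{Step 2: $B>\frac{m^2-2m-4}{2}$.} Write $B(n)=\frac{(n-2)m(m-1)}{2(n-1)}-\frac{4(m-1)}{n-2}+\frac{2m(m-1)}{(n-1)(n-2)}$. The first term is increasing in $n$. I will combine the last two terms as
\[
-\frac{4(m-1)}{n-2}\Bigl(1-\frac{m}{2(n-1)}\Bigr),
\]
and check that $B(n)-B(m)$, after multiplying by $(n-1)(n-2)(m-1)(m-2)>0$, becomes $(n-m)$ times a polynomial that is positive for $5\le m<n$. Factoring out $(n-m)$ is the key simplification, and it is available because the difference vanishes at $n=m$.

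\textbf{Step 3: $Q_M>Q_N$.} Substitute $a,b,J_N$ into $Q_N$ and put everything over the common denominator $8(n-1)^2(n-2)^2$. Then form $Q_M-Q_N$ and again factor out $(n-m)$, which is possible since $Q_N|_{n=m}=Q_M$. What remains is to verify that the cofactor is positive for $n>m\ge5$. For this I will substitute $n=m+k$ with $k\ge1$ and check that all the resulting coefficients, as a polynomial in $m-5$ and $k-1$, are nonnegative.

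\textbf{Step 4: $\frac{n-4}{n}Q_N<\frac{m-4}{m}Q_M$.} This is the main obstacle, because it genuinely fails in some low-dimensional range, which explains the hypothesis ``$m\ge6$ or $n\ge12$''. I will clear denominators by multiplying through by $8m\,n(n-1)^2(n-2)^2>0$. This gives an inequality of the form $P(m,n)>0$, with $P$ of degree about $6$ and vanishing at $n=m$, so I will factor out $(n-m)$ as before. For the cofactor $P_1$ I will split into two cases:
\begin{itemize}
\item For $m\ge6$: substitute $m=6+s$ and $n=m+k$ with $s\ge0$, $k\ge1$, and show $P_1$ has nonnegative coefficients with at least one strictly positive.
\item For $m=5$: $P_1$ becomes a one-variable polynomial in $n$. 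I will locate its largest real root, expect it to lie below $12$, and confirm positivity for $n\ge12$ by a shifted substitution $n=12+k$.
\end{itemize}
If some coefficients in the $m\ge6$ expansion turn out negative, the fallback is to group them with dominant positive terms using $k\ge1$ and $s\ge0$, for instance $k^3\ge k^2$. This is where I expect most of the effort to go.
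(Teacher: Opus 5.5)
Your plan is correct and follows the paper's overall route: compute $B$, $C$, $Q_N$ from the block form of $A_N$, factor out $(n-m)$, and certify the sign of the cofactor. Your recomputation also repairs a slip in the paper, which displays $B-\frac{m^2-2m-4}{2}$ with the wrong sign; the correct value is $\frac{(n-m)((m+4)n-6m-4)}{2(n-1)(n-2)}>0$.

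The real difference is how you certify the cofactor in the last inequality. The paper writes the quartic cofactor $V(m,n)=n^4(m^3-4m^2-4m+16)+\dots+64(m^3-m^2-m+1)$ as $n^3a(m,n)+n\,b(m,n)+64(m^3-m^2-m+1)$. It handles $m\ge 11$ through $a(m,m+1)>0$, and for $5\le m\le 10$ it uses numerical thresholds $c(m)$ plus a finite list of pairs $(m,n)$ checked by evaluation. Your shifted substitutions replace that case enumeration with one symbolic coefficient check for all $m\ge 6$ and one for $m=5$.

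For this to work you must expand in $k-1$ (that is, $n=m+1+t$), as you did in Step~3, not in $k$. From $V(6,n)=64n^4-900n^3+5032n^2-13776n+11200$ one gets $V(6,6)=-1760$, so the constant term of an $(s,k)$-expansion is negative. Indeed, for $m=6$ the inequality fails for real $n$ slightly above $6$. So the integrality $n\ge m+1$ is genuinely used in Step~4, unlike Steps~2 and~3, which hold for all real $n>m$.

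With $n=m+1+t$ all coefficients are positive. For instance $V(m,m+1)=m^2(m^5-8m^4+13m^3+m^2-14m+7)$, and at $m=6+s$ the quintic factor becomes $s^5+22s^4+181s^3+667s^2+970s+175$; the $t$-coefficients are likewise positive polynomials in $s$. For $m=5$ one finds $V(5,12+k)=21k^4+587k^3+5636k^2+19340k+5520$.
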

\begin{proof}
    \begin{equation}
    B-\frac{m^2-2m-4}{2}=\frac{n-m}{2(n-1)(n-2)}(-mn+6m-4n+4)<0\notag
\end{equation}
And it's easy to see that $B<C$, so $C>B>\frac{m^2-2m-4}{2}$. So the first line is proved.

To prove the second line, we compute
\begin{equation}
    \begin{aligned}
        \quad Q_N-Q_M&=\frac{m(m-n)}{8(n-2)^2 (n-1)^2}\times\\
        &\quad \big[ n^3(m^2 - 4) + n^2(-4m^2 - 5m + 24) + n(16m^2 - 4m - 36)\\
        &\quad -16m^2 + 12m+16\big]
    \end{aligned}
\end{equation} 
Since $n\geq 6$, $m\geq 5$, we have
\begin{equation}
    \begin{aligned}
        n^3(m^2 - 4) + n^2(-4m^2 - 5m + 24)&=n^2\big(n(m^2-4)-4m^2-5m+24\big)\\
        &\geq n^2 (2m^2-5m)>0\notag
    \end{aligned}
\end{equation}
\begin{equation}
    n(16m^2 - 4m - 36) -16m^2 + 12m+16\geq 80m^2-12m-200>0
\end{equation}

To prove the last one, we compute
\begin{equation}
    \begin{aligned}
        &\frac{n-4}{n}Q_N-\frac{m-4}{m}Q_M\\
        &=\frac{(m-n)}{8n(n-2)^2 (n-1)^2}\times\\
        &\quad\big[n^4(m^3 - 4m^2 - 4m + 16) + n^3(-8m^3 + 19m^2 + 40m - 96) \\
        &\quad + n^2(32m^3 - 36m^2 - 132m + 208) + n(-80m^3 + 76m^2 + 160m - 192) \\
        &\quad+ 64(m^3 - m^2 - m + 1)\big]\label{Qn-Qm}
    \end{aligned}
\end{equation}

Let $a(m,n)=(m^3-4m^2-4m+16)n-8m^3+19m^2+40m-96$ and $b(m,n)=(32m^3-36m^2-132m+208)n-80m^3+76m^2+160m-192$. Apparently $64(m^3 - m^2 - m + 1)>0$, so $\frac{n-4}{n}Q_N-\frac{m-4}{m}Q_M<0$ provided $a(m,n)\geq 0$ and $b(m,n)\geq >0$. For $b(m,n)$, since $m\geq 5$

$$32m^2-36m^2 -132m+208\geq 124m^2-132m+208>0$$
we have
\begin{equation}
    \begin{aligned}
        b(m,n)\geq b(m,m+1)&=32m^4-84m^3-92m^2+76m+208\\
        &\geq 76m^3-92m^2+76m+208>0\notag
    \end{aligned}
\end{equation}

Since $m^3-4m^2-4m+16>0$, $a(m,n)\geq a(m,m+1)$. For $m\geq 11$, we have
\begin{equation}
        a(m,m+1)=m^4-11m^3+11m^2+52m-80\geq 11m^2+52m-80>0\notag
\end{equation} 
If $5\leq m\leq 10$, $a(m,n)$ is increasing in $n$, so $a(m,n)\geq 0$ provided 
$$n\geq \frac{8m^3-19m^2-40m+96}{m^3-4m^2-4m+16}\coloneqq c(m)$$
We compute one by one: $c(5)\approx 20.04, c(6)\approx 14.06, c(7)\approx 12.06, c(8)\approx 11.06, c(9)\approx 10.46, c(10)\approx 10.06$. So we are only left these finitely many cases. We plug these finite points into (\ref{Qn-Qm}) and check one by one, we see that $Q_N-Q_M\geq 0$ only for $(m=5,6\leq n\leq 11)$ these 6 cases. 

\end{proof}

It is worth mentioning the work in \cite{GHL}. Define $Y_4^+(M,g)>0$ by (\ref{Q-def}) with the condition $u\in C^{\infty}(M), u> 0$, instead of $u\in H^2(M), u\neq 0$. It is proved that if $Y(M,g)>0$ and $Y_4^+(M,g)>0$, then there exists $\tilde{g}\in [g]$ such that $\tilde{R}>0$ and $\tilde{Q}>0$, provided that $n\geq 6$. The restriction $n\geq 6$ is also an unfortunate byproduct of our technique. It is unclear whether there is a deeper geometric or analytical reason for the failure of the argument in dimension $n=5$ in both Theorem \ref{thm-Q} and \cite{GHL}.

\section{Isoperimetric Ratio Type}

Consider $(M^m, \Sigma=\partial M, h)$.
Given $u$ on $\Sigma$, and let $Pu$ be its hamonic extension to $M$.
\begin{definition}\label{def-isoYamabe}
    For $p_1=\frac{2 (m-1)}{m-2}, q_1=\frac{2 m}{m-2}$, Define
    \begin{equation}
        \begin{aligned}
            E(u)&=\frac{\|P u\|_{L^{q_1}(M)}}{\|u\|_{L^{p_1}(\Sigma)}}\\
            s(M,h)&=\sup\limits_{u\in L^{p_1}(\Sigma), u\neq 0} E(u)
        \end{aligned}
    \end{equation}
\end{definition}

It was shown in \cite{HWY08}\cite{HWY09} that
\begin{theorem}\label{HWY}
For \(1 \leq p < \infty\) and \(1 \leq q \leq \frac{m p}{m-1}\), the operator \(P: L^p(\Sigma) \to L^q(M)\) is bounded, and compact when \(q < \frac{m p}{m-1}\). Hence \(s(M,h)\) is well-defined.

The constant \(S(B^m, h)\) where $h$ is the flat metric is attained by the function \(u_\phi\) induced by a Möbius transformation \(\phi\) (so \(\phi^*(dx^2)=u_\phi^{4m/(m-2)}dx^2\)). If \(s(M,h) < S(B^m, dx^2)\), then \(s(M,h)\) is also attained.
\end{theorem}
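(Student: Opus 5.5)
The statement is the result of \cite{HWY08}\cite{HWY09}, so I would reconstruct their proof in three independent parts: boundedness and compactness of $P$, the extremal on the ball, and attainment below the ball threshold.

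\emph{Boundedness and compactness.} I would write $Pu(x)=\int_\Sigma K(x,y)u(y)\,d\sigma(y)$, where $K$ is the Poisson kernel of $(M,h)$. The standard estimate on a compact manifold with smooth boundary is
\[
0\le K(x,y)\le C\,\frac{d(x)}{|x-y|^{m}}\le C|x-y|^{1-m},
\]
with $d(x)=\mathrm{dist}(x,\Sigma)$.
\begin{itemize}
\item Endpoints: $P$ maps $L^\infty(\Sigma)\to L^\infty(M)$ by the maximum principle. From the kernel bound, $P$ maps $L^1(\Sigma)$ into weak-$L^{m/(m-1)}(M)$. This follows because $\{x: d(x)|x-y|^{-m}>s\}$ has measure $\lesssim s^{-m/(m-1)}$, which I would check by a direct computation in boundary normal coordinates.
\item Interpolation: Marcinkiewicz interpolation between these two endpoints gives strong boundedness $L^p\to L^{mp/(m-1)}$ for every $1<p<\infty$.
\item Smaller $q$: for $q<mp/(m-1)$ (including $p=1$), boundedness follows from H\"older's inequality since $M$ is compact.
\item Compactness for $q<mp/(m-1)$: split $K=K\chi_{\{d(x)>\delta\}}+K\chi_{\{d(x)\le\delta\}}$. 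The first piece is a smooth kernel, hence compact. The second has operator norm $o(1)$ as $\delta\to0$, by H\"older using the gap between $q$ and the critical exponent. A norm limit of compact operators is compact.
\end{itemize}
I expect the endpoint $p=1$, $q=m/(m-1)$ to be the delicate point. There I would rely on the weak-type bound and state strong boundedness only where it genuinely holds.

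\emph{The ball.} The ratio $\|Pu\|_{L^{q_1}}/\|u\|_{L^{p_1}}$ is invariant under the conformal (M\"obius) group of $B^m$ when $u$ transforms with weight $|\phi'|^{(m-2)/2}$. This holds because harmonic extension intertwines with the conformal action on $B^m$ for these weights, and the exponents $p_1,q_1$ are precisely the conformally invariant ones. I would proceed as follows.
\begin{enumerate}
\item Use the compactness above in a subcritical approximation $q<q_1$ to obtain positive maximizers.
\item Pass to the limit $q\to q_1$, using the M\"obius group to renormalize so that no concentration occurs.
\item Classify positive solutions of the Euler--Lagrange system ($\Delta w=w^{q_1-1}$-type coupled to the boundary condition) via the method of moving spheres. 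This shows they are exactly the $u_\phi$, which identifies $S(B^m,dx^2)$.
\end{enumerate}

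\emph{Attainment when $s(M,h)<S(B^m,dx^2)$.} Take a maximizing sequence $u_k$ with $\|u_k\|_{L^{p_1}}=1$. I would apply concentration-compactness to the measures $|Pu_k|^{q_1}dv$ and $|u_k|^{p_1}d\sigma$. At any atom on $\Sigma$, a blow-up in boundary normal coordinates makes the metric flat half-space, conformally the ball. This yields the local bound $\nu^{(j)}\le S(B)^{q_1}(\mu^{(j)})^{q_1/p_1}$. Interior atoms cannot occur, since $Pu_k$ is harmonic and interior estimates apply. Because $q_1/p_1>1$, strict concavity of $t\mapsto t^{q_1/p_1}$ shows that any loss of mass, whether a boundary atom or vanishing, forces the ratio to be $\le S(B)$. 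This contradicts $s(M,h)<S(B)$. Hence $u_k\to u$ strongly and $u$ is a maximizer.

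The main obstacle is the sharp local constant in the blow-up step. I would first establish the $\epsilon$-localized inequality $\|P(\eta u)\|_{q_1}\le (S(B)+\epsilon)\|\eta u\|_{p_1}+C_\epsilon\|u\|_{L^{p_1-\delta}}$ for cutoffs $\eta$ supported in small boundary charts.
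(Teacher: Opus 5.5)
The paper gives no proof of this statement; it quotes it from \cite{HWY08}\cite{HWY09}, so your outline is judged against those sources. There is a genuine gap in the final step of the attainment part. Your concentration--compactness argument shows that if a maximizing sequence loses compactness at a boundary atom (interior atoms and vanishing being excluded, as you say), then $s(M,h)\le S(B^m,dx^2)$. That does not contradict the hypothesis $s(M,h)<S(B^m,dx^2)$; it is consistent with it.

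For a supremum problem the threshold runs the other way. Put the half-space form of the extremals $u_\phi$ into Fermi coordinates at a boundary point, where $h=dx^2+O(|x|)$, and let them concentrate. Since the ratio is scale invariant and the half-space and ball constants coincide, this gives $s(M,h)\ge S(B^m,dx^2)$ for every $(M,h)$. So the printed hypothesis can never hold: the literal implication is vacuous, and proving it literally amounts to proving this lower bound. The printed inequality is reversed; the meaningful statement is that $s(M,h)>S(B^m,dx^2)$ implies attainment.

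Your argument proves that version once the conclusion is fixed. Take $r=q_1/p_1>1$, $a=\|u\|_{L^{p_1}(\Sigma)}^{p_1}$ for the weak limit, and $\sigma_j$ the boundary atoms (your $\mu^{(j)}$). Then $s^{q_1}\le s^{q_1}a^{r}+S(B^m,dx^2)^{q_1}\sum_j\sigma_j^{r}$ and $a+\sum_j\sigma_j\le 1$. If some $\sigma_j>0$ and $s>S(B^m,dx^2)$, the right side is strictly less than $s^{q_1}(a^r+\sum_j\sigma_j^r)\le s^{q_1}(a+\sum_j\sigma_j)^r\le s^{q_1}$, a contradiction. Hence there are no atoms, $\|Pu\|_{L^{q_1}(M)}=s$, and $u$ is a maximizer. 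The inequality used is superadditivity, i.e.\ convexity of $t\mapsto t^r$, not concavity.

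Two smaller corrections. First, strong boundedness at $p=1$, $q=\frac{m}{m-1}$ is not merely delicate; it is false, and you should say so. Near $y$ one has $K(x,y)\ge c\,d(x)|x-y|^{-m}$, so $\int_M K(x,y)^{m/(m-1)}dx$ diverges like $\int_0^{r_0}\rho^{-1}\,d\rho$. Fatou applied to an approximate identity at $y$ then shows $P$ is unbounded $L^1(\Sigma)\to L^{m/(m-1)}(M)$. Keep the weak-type bound as the endpoint; your Marcinkiewicz, H\"older and $d(x)>\delta$ splitting steps are fine.

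Second, on the ball the Euler--Lagrange equation is not of the type $\Delta w=w^{q_1-1}$. It is the integral equation $f^{p_1-1}=\lambda\,P^*\bigl((Pf)^{q_1-1}\bigr)$ on $S^{m-1}$. Equivalently, with $u=Pf$ harmonic, one has $-\Delta w=u^{q_1-1}$ in $B^m$, and $w=0$, $-\partial_\nu w=\lambda u^{p_1-1}$ on $S^{m-1}$. This is the system whose nonnegative solutions \cite{HWY08} regularize and classify, and the regularity step you omit must come before any moving-sphere argument. Your subcritical approximation is harmless, but it only produces a maximizing sequence for the critical problem. The M\"obius-normalized concentration--compactness argument on the ball is therefore still needed in full.
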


Next we prove \textbf{Theorem \ref{thm-isoratio}}.

\begin{proof}
    \hspace{0pt}
    \noindent\textit{1) $u\equiv 1$ is stable critical point for $E_{\bar{\Lambda}}$ when $\Lambda$ is small} 

Let $P_B$ and $P_N$ be the harmonic extension map for $B^m$ and $N(\Lambda)$ respectively. We start with a claim.
 
\noindent \textit{Claim}: For $u\in L^1(S^{m-1})$ or $u\in L^1(N(\Lambda)$, we have
\begin{equation}
    \begin{aligned}
       \int_{S^{m-1}}u=m \int_{\mathbb{B}^m} P_B u\\
   \int_{\Sigma(\Lambda)} u=m\int_{N(\Lambda)} P_{N} v\label{P-L1-estimate}
    \end{aligned}
\end{equation}

For the first equality, let \(\{ \phi_i, \lambda_i \}_{i=0}^{\infty}\) denote the eigenpairs on \(S^{n-1}\), which form an orthonormal basis for \(L^2(S^{n-1})\). In polar coordinates, \(P_B(\phi_i)(r,\theta) = r^k \phi_i(\theta)\) for some \(k \in \mathbb{Z}^+\), and hence $\int_{B^m} P_B(\phi_i)\,dx = 0 \quad \text{for } i \neq 0.$ The factor \(m\) arises because \(P_B(1) = 1\) and \(\operatorname{Vol}(S^{m-1}) = m \operatorname{Vol}(B^m)\). Thus the identity holds for all \(u \in C^\infty(S^{m-1})\). For \(u \in L^1(S^{m-1})\), take a smooth sequence approximating \(u\) and use the fact that \(P : L^p(\Sigma) \to L^q(M)\) is bounded to pass to the limit.

For the second one, let $\bar{u}=\int_{T^{n-m}(\Lambda)} u$ and $\overline{P_Nu}\int_{T^{n-m}(\Lambda)} P_Nu$ be the average of $u$ and $P_Su$ along $T^{n-m}(\Lambda)$. Then
\[
\Delta_{B^m}  \overline{P_Nu}= \int_{T^{n-m}} \Delta_{B^m} (P_N u)= -\int_{T^{n-m}} \Delta_{T^{n-m}} (P_N v)= 0 .
\]
Thus $P_B \bar{u} = \overline{P_N(u)}$, and in (\ref{P-L1-estimate}) the second equality follows from the first one.

Next we compute $\frac{\partial^2}{\partial t^2} E_B(1+t u)$ and $\frac{\partial^2}{\partial t^2} E_N(1+t u)$ for $\int_{S^{m-1}} u=0, \int_{\Sigma(\Lambda)} u=0$.

$$
\begin{aligned}
\frac{\partial^2}{\partial t^2} E_B (1+t u)|_{t=0}&=C(m)\left((m+2) \int_{B^m} (P_B u)^2-\int_{S^{n-1}} u^2\right)\\
\frac{\partial^2}{\partial t^2} E_N(1+t u)|_{t=0}&=C(m,n, \Lambda)\left(\frac{m(n+2)}{n} \int_N\left(P_N u\right)^2-\int_{\Sigma} u^2\right)
\end{aligned}
$$
and $\int_{B^m}P_Bu=0$, $\int_{N}P_Nu=0$ are used in the computation.

On a general $(M,\Sigma,h)$, by \textbf{Theorem \ref{HWY}}, $P: L^2(\Sigma) \rightarrow L^2(M)$ is compact, we can consider the variation problem

$$
\mu_i(M)=\inf\limits_{\text{codim} V=i} \sup _{ u \in V,u\neq 0}\frac{\|P u\|_{L^2(M)}}{\|u\|_{L^2(\Sigma)}}
$$

$\mu_i(M)$ can always achieved by a smooth function $\phi_i$ satisfying

$$
P^* P \phi_i=\int_M K(x, \varphi) P u_i(x) d x=\mu_i(M) \phi_i \quad x \in M, \varphi \in \Sigma \text {, }
$$
where $K(x, \varphi)$ is the Poison kernel and $P^*$ is adjoint operator for $P$. $P^* P: L^2(\Sigma) \rightarrow L^2(\Sigma)$ is a compact. setf-adjoint operator,
so by standard theory (see \cite{Evans} D4, for example), its eiganvalue is a sequence decreasing to 0 .

Applying the same strategy, there exists $k$ large st $\mu_1(N(\Lambda/2^k))=\mu_1\left(B^m\right)$ and the stability follows since $\frac{m(n+4)}{n}<m+2$ for $n>m$.

\noindent ii) convergence to $u_0$ st $E_{B^m}\left(u_0\right) \geqslant E_{B^m}(1)$

We need the following Concentration-Compactness lemma proved in \cite{HWY09}:
\begin{lemma}(Concentration compactness lemma). Assume $m \geqslant 2,\left(M^m, h\right)$ is a smooth compact Riemannian manifold with boundary $\Sigma=\partial M, 1<p<\infty, f_i \in L^p(\Sigma)$ such that $f_i \rightharpoonup f$ in $L^p(\Sigma)$. After passing to a subsequence assume

$$
\left|f_i\right|^p d S \rightharpoonup \sigma \quad \text { in } \mathcal{M}(\Sigma), \quad\left|P f_i\right|^{\frac{n p}{n-1}} d \mu \rightharpoonup \nu \quad \text { in } \mathcal{M}(M) \text {. }
$$

Here $\mathcal{M}(\Sigma)$ is the space of all Radon measures on $\Sigma$. Then we have

\noindent- $\left.\nu\right|_{M \backslash \Sigma}=|P f|^{\frac{n p}{n-1}} d \mu$. Moreover for every Borel set $E \subset \Sigma, \nu(E)^{\frac{n-1}{n p}} \leqslant c_{n, p} \sigma(E)^{\frac{1}{p}}$.

\noindent- There exists a countable set of points $\zeta_j \in \Sigma$ such that $\nu=|P f|^{\frac{n p}{n-1}} d \mu+\sum_j \nu_j \delta_{\zeta_j}, \sigma \geqslant|f|^p d S+\sum_j \sigma_j \delta_{\zeta_j}$, here $\sigma_j=\sigma\left(\left\{\zeta_j\right\}\right)$ and $\nu_j^{\frac{n-1}{n p}} \leqslant c_{n, p} \sigma_j^{\frac{1}{p}}$.
\end{lemma}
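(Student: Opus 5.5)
The plan is to follow P.-L. Lions' proof of the classical concentration-compactness lemma. The Sobolev embedding is replaced by the boundedness of $P:L^p(\Sigma)\to L^{q}(M)$ with $q=\frac{np}{n-1}$ from Theorem \ref{HWY}, and its compactness for smaller exponents. Write $q=\frac{np}{n-1}>p$ and $g_i=f_i-f\rightharpoonup 0$ in $L^p(\Sigma)$. Then $Pg_i\to 0$ strongly in $L^{r}(M)$ for every $r<q$ by the compactness part of Theorem \ref{HWY}. Also $Pg_i\to 0$ locally uniformly in $M\setminus\Sigma$. Indeed, $Pg_i(x)=\int_\Sigma K(x,\zeta)g_i(\zeta)\,dS$, and for $x$ in a compact subset of the interior the Poisson kernel $K(x,\cdot)$ is a smooth function of $\zeta$, uniformly in $x$. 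Hence $Pf_i\to Pf$ locally uniformly in the interior, and $\nu|_{M\setminus\Sigma}=|Pf|^{q}d\mu$ follows at once.

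The key step is a localized version of the trace-type inequality. For $\varphi\in C^\infty(M)$ I would show that the commutator $[P,\varphi]g:=P(\varphi g)-\varphi\,Pg$ defines a compact operator $L^p(\Sigma)\to L^{q}(M)$. Its kernel is $(\varphi(\zeta)-\varphi(x))K(x,\zeta)$. Since $|\varphi(\zeta)-\varphi(x)|\le C|x-\zeta|$ and $|K(x,\zeta)|\le C\,d(x)|x-\zeta|^{-n}$, the kernel is one order less singular than $K$. A Schur/Young-type estimate then shows it maps $L^p(\Sigma)$ into $L^{q+\epsilon}(M)$ boundedly (or into $L^q$ compactly), which gives compactness by interpolation. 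Applying this to $f_i$ gives $\varphi Pf_i=P(\varphi f_i)+o(1)+[\text{compact}](f_i)$. Since the compact image of a weakly convergent sequence converges strongly, letting $i\to\infty$ yields
\[
\Big(\int_M|\varphi|^{q}\,d\nu\Big)^{1/q}\le c_{n,p}\Big(\int_\Sigma|\varphi|^p\,d\sigma\Big)^{1/p}+\|[P,\varphi]f\|_{L^q}\text{-type limit term}.
\]
For $\varphi$ supported in a shrinking neighborhood of a boundary set, the limit term is controlled by the restriction of $|Pf|^q$ and $|f|^p$ to that neighborhood, which vanishes on $\Sigma$. Approximating $1_E$ by such $\varphi$ (using outer regularity and the fact that $|Pf|^qd\mu$ gives no mass to $\Sigma$) gives $\nu(E)^{1/q}\le c_{n,p}\sigma(E)^{1/p}$ for Borel $E\subset\Sigma$.

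The atomic decomposition then follows Lions' standard argument. Let $\nu_0=\nu-|Pf|^q d\mu$, which is supported on $\Sigma$ by the first step. The Borel inequality gives $\nu_0(E)^{1/q}\le c\,\sigma(E)^{1/p}$. Because $q>p$, this reverse-Hölder relation forces $\nu_0$ to be absolutely continuous with respect to $\sigma$ with density vanishing $\sigma$-a.e. off the atoms of $\sigma$. Since $\sigma$ is finite, it has at most countably many atoms $\zeta_j$, so $\nu_0=\sum_j\nu_j\delta_{\zeta_j}$ with $\nu_j^{1/q}\le c\,\sigma_j^{1/p}$. Finally, weak lower semicontinuity gives $\sigma\ge|f|^p dS$. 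Since this absolutely continuous part is mutually singular with the atoms, we get $\sigma\ge|f|^pdS+\sum_j\sigma_j\delta_{\zeta_j}$.

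The main obstacle I expect is the commutator compactness near $\Sigma$, and making the limit term vanish. This needs the precise Poisson kernel bounds on a manifold with boundary and the mixed-norm estimate for the less singular kernel. I would first localize to boundary coordinate charts where $K$ is comparable to the half-space Poisson kernel, and there prove the improved $L^p\to L^{q+\epsilon}$ bound by direct kernel estimates.
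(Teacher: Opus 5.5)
The paper does not prove this lemma; it quotes it from \cite{HWY09}, so your argument has to stand on its own. Most of it does. The interior identification $\nu|_{M\setminus\Sigma}=|Pf|^q\,d\mu$ is fine. So is the compactness of $[P,\varphi]$: domination of its kernel by $|x-\zeta|^{2-n}$, hence by a Riesz potential $I_1$ on $\Sigma$, gives $L^p\to L^{q+\epsilon}$. The closing Lions-type atomic argument is also fine.

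The step that fails is the claim that the leftover term $\|[P,\varphi]f\|_{L^q(M)}$ becomes negligible as $\varphi\downarrow 1_E$.
\begin{itemize}
\item It is true that $\varphi Pf\to 0$ in $L^q$, because $\mu(\Sigma)=0$.
\item But $\varphi f\to 1_E f$ in $L^p(\Sigma)$, so $[P,\varphi]f\to P(1_E f)$ in $L^q(M)$.
\item This limit is nonzero whenever $\int_E|f|^p\,dS>0$. The measure $|f|^p\,dS$ lives on $\Sigma$; it does not vanish there.
\end{itemize}
What your argument actually proves is $\nu(E)^{1/q}\le 2\|P\|\,\sigma(E)^{1/p}$, using $\sigma\ge|f|^p\,dS$. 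Here $\|P\|=\|P\|_{L^p(\Sigma)\to L^q(M)}$ is a global constant depending on $(M,h)$. In your displayed inequality this global norm has been silently renamed $c_{n,p}$. The notation in the lemma signals a constant depending only on $n$ and $p$, coming from the local half-space model rather than from the global geometry of $M$. That local constant is what is needed to compare $s(M,h)$ with the model value.

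Your crude inequality is still enough for the Lions argument, so $\nu|_\Sigma=\sum_j\nu_j\delta_{\zeta_j}$ does follow. The repair concerns the constant and needs two further ingredients.

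\emph{First ingredient: remove the leftover term.} Evaluate your test-function inequality at a single atom, $\varphi\downarrow 1_{\{\zeta_j\}}$. There the leftover term tends to $\|P(1_{\{\zeta_j\}}f)\|_{L^q}=0$, because $|f|^p\,dS$ has no atoms. This gives $\nu_j^{1/q}\le c\,\sigma_j^{1/p}$. The Borel inequality then follows from
\[
\nu(E)=\sum_{\zeta_j\in E}\nu_j\le c^q\sum_{\zeta_j\in E}\sigma_j^{q/p}\le c^q\sigma(E)^{q/p},
\]
using $q/p>1$. Alternatively, as in Lions' proof, run everything on $g_i=f_i-f$. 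Then $[P,\varphi]g_i\to 0$ in $L^q$, Brezis--Lieb (which applies by your a.e.\ convergence) gives $\nu=|Pf|^q\,d\mu+\lim|Pg_i|^q\,d\mu$, and the atoms of $\lim|g_i|^p\,dS$ coincide with those of $\sigma$.

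\emph{Second ingredient: make the constant independent of $M$.} Replace the global bound $\|P(\varphi f_i)\|_{L^q}\le\|P\|\,\|\varphi f_i\|_{L^p}$ by a local one. For $\varphi$ supported in a $\delta$-neighbourhood of a boundary point, compare $P$ in boundary coordinates with the half-space Poisson operator.
\begin{itemize}
\item The far-field part has a smooth kernel.
\item The kernel difference is again $O(|x-\zeta|^{2-n})$, hence compact.
\item The distortion of $d\mu$ and $dS$ costs a factor $1+O(\delta)$.
\end{itemize}
Together these give $\|\varphi Pf_i\|_{L^q}\le(c_{n,p}+O(\delta))\|\varphi f_i\|_{L^p}+\|\mathcal K f_i\|_{L^q}$ with $\mathcal K$ compact. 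The limit $\|\mathcal K f\|_{L^q}$ again vanishes as $\varphi\downarrow 1_{\{\zeta_j\}}$. Your last paragraph invokes half-space charts only for the commutator; this comparison is where they are really needed.
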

Apply the same strategy, and we get a limiting $u_0$ so thet
\begin{equation}
\|P_B u\|_{L^{2n/(n-2)}(B^m)} \leqslant \frac{|B^m|^{(n-2)/2n}}{|S^{m-1}|^{(n-2)/(2(n-1))}} \|u\|_{L^{2(n-1)/(n-2)}(S^{m-1})}\label{isoYamabe-smallenergy}
\end{equation}

\noindent iii) $u_0=1$

We are going to use Riesz-Thorin interpolation theory \textbf{Lemma \ref{RieszThorin}}. For simplicity, in this part we take normalized $L_p$ norm
$$
\|u\|_{L(M)}=\big(\frac{1}{|M|} \int_M |u|^p\big)^{\frac{1}{p}} \text { where } M=S^{m-1} \text { or } B^m
$$

Consider P: $L^{1}\left(S^m\right) \rightarrow L^{1}\left(B^m\right)$
if $u \geq 0$, by the (\ref{P-L1-estimate}), we have.
$$
\|P u\|_{L^{1}\left(B^{m}\right)}=\|u\|_{L^{1}\left(S^{m-1}\right)}
$$

If $u$ changes sign, $|P u| \leq P|u|$ (Poison bernel is non-negative), so
$$\|P u\|_{L^{1}\left(B^m\right)} \leq\|P(|u|)\|_{L^{1}\left(B^m\right)} \leq\|u\|_{L^{1}\left(S^{m-1}\right)}
$$
Use Riesz-Thorin interpolation for $P$:

$$
\begin{array}{ll}
P: L^{1}\left(S^{m-1}\right) \rightarrow L^{1}\left(B^m\right) & \text { norm } 1 \\
P: L^{p_1}\left(S^{m-1}\right) \rightarrow L^{q_1}\left(B^m\right) & \text { norm } 1 \quad p_1=\frac{2(m-1)}{m-2}, q_1=\frac{2 m}{m-2}
\end{array}
$$

Let $p_\theta=\frac{2(n-1)}{n-2}$. Pick $\theta$ st $\frac{1}{p_\theta}=\theta+(1-\theta) \frac{1}{p_1}$
and we find $\theta=\frac{n-m}{m(n-1)}$.
Then we compute
$$
 \frac{1}{q_\theta}=\theta+(1-\theta) \frac{1}{q_1}=\frac{n m^2-m n-2 m^2+2 n}{2 m^2(n-1)}
$$

By Riesz-Thorin interpolation
$$
\|P u\|_{L^{q_\theta}(B^m)} \leqslant\|u\|_{L^{p_\theta}(S^{m-1})}
$$
By H\'older inequality, if $q_{\theta}> \frac{2 n}{n-2}$, then we have
$$
\|P u\|_{L^{2n/(n-2)}(B^m)} \leqslant\|u\|_{L^{2(n-1)/(n-2)}(S^{m-1})}
$$
with equality iff $u$ is constant, which implies $u_0=1$ by (\ref{isoYamabe-smallenergy}).

Compute $q_\theta-\frac{2 n}{n-2}$:
$$
q_\theta-\frac{2 n}{n-2}=(n-m) \frac{(m-2) n-2 m}{(n-2)\left(\left(m^2-m+2\right) n-2 m^2\right)}
$$

$\left(m^2-m+2\right) n-2 m^2>0$, and $(m-2) n-2 m>0$ unless $m=3, n=4,5,6$.
\end{proof}

\begin{remark}
On the model space \((B^m, S^{m-1}, dx^2)\), let \(\{(u_i, \lambda_i)\}\) denote the eigenpairs of \(-\Delta_{S^{m-1}}\). Then each \(u_i\) is also an eigenfunction of \(P^*P\).

To verify this, consider two distinct eigenfunctions \(u_i\) and \(u_j\) (\(i \neq j\)). In polar coordinates, \(P u_i = r^k u_i\) and \(P u_j = r^l u_j\) for some integers \(k, l \ge 0\). Using orthogonality,
\[
\begin{aligned}
\int_{S^{m-1}} u_i\, (P^*P u_j) \, d\varphi 
&= \int_{B^m} (P u_i)(P u_j) \, dx \\
&= \int_0^1 r^{k+l}\, dr \int_{S^{m-1}} u_i u_j \, d\varphi = 0 .
\end{aligned}
\]

Since \(\{u_i\}_{i=0}^\infty\) forms an orthonormal basis of \(L^2(S^{m-1})\), it follows that each \(u_i\) is an eigenfunction of \(P^*P\). Moreover, the first non‑zero eigenvalue satisfies
\[
\mu_1(B^m) = \frac{1}{m+2}.
\]
\end{remark}

\begin{remark}
Let \( p^* = \frac{mp}{p-1} \) for \( p \geq 1 \). By \textbf{Theorem \ref{HWY}}, for any \( q \leq p^* \) we have
\[
\|Pu\|_{q} \leq C(p,q,m)\,\|u\|_p,
\]
where \( C(p,q,m) \) is a finite constant.

According to Corollary 3.2 of \cite{HWY09}, for \( p \geq \frac{2(m-1)}{m-2} \) and \( q = p^* \), the constant \( C(p,p^*,m) \) is attained by a constant function. For \( 1 < p < \frac{2(m-1)}{m-2} \), however, \( C(p,p^*,m) \) is not attained by a constant, because \( u \equiv 1 \) fails to be a stable critical point. Step (iii) shows that for each such \( p \) there exists a critical exponent \( \bar{q}(p) \) such that \( C(p,q,m) \) is attained by a constant whenever \( q \leq \bar{q}(p) \).

It would be interesting to determine precisely when \( C(p,q,m) \) is attained by a constant, and to compute \( C(p,p^*,m) \) for \( 1 \leq p < \frac{2(m-1)}{m-2} \).
\end{remark}

\begin{lemma} \label{RieszThorin}(Riesz-Thorin) $(\Omega, \mu),(\tau, \nu)$ measure spaces, $p_0, p_1, q_0, q_1 \in \left[1, \infty\right)$
$$
\frac{1}{p_\theta}=\frac{1-\theta}{p_0}+\frac{\theta}{p_1}, \frac{1}{q_\theta}=\frac{1-\theta}{q_0}+\frac{\theta}{q_1} \quad \theta \in\left[0, 1\right]
$$
If $T: L^{p_0}(\mu) \cap L^{p_1}(\mu) \rightarrow L^{q_0}(\nu) \cap L^{q_1}(\nu)$ st $\|T\|_{L^{p_i} \rightarrow L^{q_i}} \leq C_i \quad i=0,1$ then $T \cdot L^{p_\theta}(\mu) \rightarrow L^{q_\theta}(\nu)$ is bounded
 \[\|T\|_{L^{p_\theta} \rightarrow L^{q_\theta}} \leqslant C_0^{1 \cdot \theta} C_1^\theta\]
\end{lemma}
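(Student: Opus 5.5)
We follow the classical complex-interpolation proof via Hadamard's three lines lemma. The bound to prove is $\|T\|_{L^{p_\theta}\to L^{q_\theta}}\le C_0^{1-\theta}C_1^{\theta}$. The endpoints $\theta=0,1$ are the hypotheses, so fix $\theta\in(0,1)$. Since $p_\theta<\infty$, simple functions with finite-measure support are dense in $L^{p_\theta}(\mu)$. It therefore suffices to prove the estimate for such $f$; the operator then extends by density, and this extension agrees with $T$ on $L^{p_0}\cap L^{p_1}$.

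By duality, since $q_\theta<\infty$ and $\nu$ may be assumed $\sigma$-finite on the support of $Tf$, we have $\|Tf\|_{q_\theta}=\sup\left|\int (Tf)\,g\,d\nu\right|$. The supremum runs over simple $g$ with $\|g\|_{q_\theta'}=1$. Normalize $\|f\|_{p_\theta}=1$. Write $f=\sum_k a_k e^{i\alpha_k}\chi_{A_k}$ and $g=\sum_j b_j e^{i\beta_j}\chi_{B_j}$ with $a_k,b_j>0$. For $z$ in the strip $S=\{0\le \operatorname{Re}z\le 1\}$ define
\[
f_z=\sum_k a_k^{p_\theta\left(\frac{1-z}{p_0}+\frac{z}{p_1}\right)}e^{i\alpha_k}\chi_{A_k},\qquad g_z=\sum_j b_j^{q_\theta'\left(\frac{1-z}{q_0'}+\frac{z}{q_1'}\right)}e^{i\beta_j}\chi_{B_j}.
\]
Here $1/q'=1-1/q$, and the convention $1/\infty=0$ is needed because $q_0=1$ in the application. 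Then set $\Phi(z)=\int T(f_z)\,g_z\,d\nu$. By linearity of $T$, $\Phi(z)=\sum_{j,k}(\text{exponentials in } z)\int T(e^{i\alpha_k}\chi_{A_k})\,e^{i\beta_j}\chi_{B_j}\,d\nu$. This is a finite sum of entire functions, so $\Phi$ is entire and bounded on $S$. Moreover $f_\theta=f$ and $g_\theta=g$, so $\Phi(\theta)=\int (Tf)g$.

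Next we check the boundary lines. On $\operatorname{Re}z=0$ we have $|f_{iy}|=|f|^{p_\theta/p_0}$, so $\|f_{iy}\|_{p_0}^{p_0}=\|f\|_{p_\theta}^{p_\theta}=1$. Similarly $\|g_{iy}\|_{q_0'}=1$; when $q_0'=\infty$ the exponent on $b_j$ vanishes and $|g_{iy}|\le 1$ directly. Hölder's inequality and the $L^{p_0}\to L^{q_0}$ bound then give $|\Phi(iy)|\le C_0$. The same computation on $\operatorname{Re}z=1$ gives $|\Phi(1+iy)|\le C_1$.

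Hadamard's three lines lemma now yields $|\Phi(\theta)|\le C_0^{1-\theta}C_1^{\theta}$. Taking the supremum over $g$ gives $\|Tf\|_{q_\theta}\le C_0^{1-\theta}C_1^\theta\|f\|_{p_\theta}$. The only delicate points are the exponent bookkeeping, which verifies $\frac{p_\theta}{p_0}(1-\theta)+\frac{p_\theta}{p_1}\theta=1$ so that $|f_z|$ has the right modulus on each line, and the degenerate dual exponent $q_0'=\infty$. We expect the main obstacle to be the rigorous justification that $\Phi$ is bounded and analytic on $S$, which is what the three lines lemma needs. This is handled by working only with simple functions, so $T$ is applied to finitely many fixed functions in $L^{p_0}\cap L^{p_1}$.
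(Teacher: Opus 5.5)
Your proof is correct and is the standard Thorin argument via Hadamard's three lines lemma. The paper does not prove this lemma but quotes the classical theorem, with the constant meant to be $C_0^{1-\theta}C_1^{\theta}$ as you read it, so yours is exactly the argument it implicitly relies on.

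The remaining loose ends are routine:
\begin{itemize}
\item Take the sets $A_k$, and likewise the sets $B_j$, pairwise disjoint, so that $|f_z|$ and $|g_z|$ are what you claim.
\item In the fully degenerate case $q_0=q_1=1$, $q_\theta'=\infty$ and your exponent for $g_z$ becomes $\infty\cdot 0$; there simply set $g_z\equiv g$.
\item Your argument is for complex scalars. This is harmless in the paper's application: the Poisson extension has a nonnegative kernel, so $|P(u+iv)|\le P|u+iv|$, and its real and complex operator norms agree.
\end{itemize}
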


\section{The Yamabe Constant for $M\times S^1(T)$ and The Yamabe Invariant}

In this section we post a question on the Yamabe constant for $M\times S^1(T)$ for arbitrary $T> 0$, and some consequences on the Yamabe invariants of $S^2\times S^2$ if this question has a positive answer.

\subsection{Model}
\,

The model we consider is from \cite{Schoen2}: \((S^{n-1} \times S^1(T),\, g_{0}(T) = h_{std} + dt^2)\), where \(h_{std}\) denotes the standard round metric on \(S^{n-1}\) with scalar curvature \((n-1)(n-2)\). Let \(u_T\) be a minimizer for the Yamabe constant of this manifold.

Via the covering map, the conformal metric \(u_T^{4/(n-2)}g_T\) lifts to a metric \((u_T^*)^{4/(n-2)}g_T^*\) on the cylinder \(\mathbb{R} \times S^{n-1}\). Because \(\mathbb{R} \times S^{n-1}\) is conformally equivalent to \((\mathbb{R}^n \setminus \{0\}, dx^2)\), this lifted metric can be expressed as \(v^{\frac{2n}{n-2}}dx^2\) on \(\mathbb{R}^n \setminus \{0\}\), where \(v\) extends to a singular function at the origin.

By \cite{CGS} the constant scalar curvature equation forces \(v\) to be rotationally symmetric; consequently, the original minimizer \(u_T\) depends only on the \(S^1(T)\) factor and satisfies the ODE
\begin{equation}
u'' - \frac{(n-2)^2}{4}\,u + \frac{n(n-2)}{4}\,u^{(n+2)/(n-2)} = 0.
\label{Yamabe-ODE}
\end{equation}
Let \(u_{n,T}\) be the \(T\)-periodic solution of (\ref{Yamabe-ODE}) that minimizes \(Y(g_0(T))\), and \(u_{n,\infty}(t) = (\cosh t)^{-(n-2)/2}\) the solution on the infinite cylinder \(\mathbb{R} \times S^{n-1}\), which are unique up to translation along \(S^1(T)\) or $\mathbb{R}$. The metric \((S^{n-1} \times \mathbb{R},\, u_{n,\infty}^{4/(n-2)}(h_{std}+dt^2))\) is isometric to the standard round sphere \((S^n,h_{std})\).

Taking the limit \(T \to \infty\), we have \(u_{n,T} \to u_{n,\infty}\) and consequently
\[
\lim_{T \to \infty} Y(S^{n-1} \times S^1(T),g_0(T)) = Y(S^n,h_{std})\Longrightarrow \sigma(S^{n-1} \times S^1)=\sigma(S^n)
\]

Although \textbf{Theorem \ref{thm_Yamabe}} assumes \(m \geq 3\), the construction described above applies to all \(m = n-1 \geq 2\), including the case of \(S^2 \times S^1\).

\subsection{Conjecture on the Yamabe constant of $M\times S^1$ and Yamabe Invariant of $S^2\times T^2$}
\,

Next, consider a general Yamabe metric \((M^{n-1},h)\) and the product manifold
\[
(N(T) = M \times S^1(T),\; g(T) = h + d\theta^2),
\]
where \(T > 0\) is an arbitrary period. Recall (\ref{isometry-Yamabe}). Intuitively, as \(T\) grows, the term \(A(u)\) becomes less significant while \(B(u)\) becomes dominant. Hence, minimizing \(E_{g(T)}\) should prioritize making \(B(u)\) small, with \(A(u)\) treated as a secondary concern as $T$ grows.

Indeed, let \(u_T\) denote a minimizer of \(E_{g(T)}\). Then
\begin{equation}
A(u_T)\ \text{is non‑decreasing},\qquad B(u_T)\searrow 0 \quad \text{as } T \to \infty.
\end{equation}
This follows directly from the comparison inequalities
\[
E_{g(T_1)}(u_{T_1}) \le E_{g(T_1)}(u_{T_2}), \qquad
E_{g(T_2)}(u_{T_2}) \le E_{g(T_2)}(u_{T_1}) \qquad (T_1 < T_2),
\]
together with the fact that \(Y(g(T)) < Y(S^n)\).

By \textbf{Lemma \ref{3-s}}, define the \(M\)-directional average
\[
\bar{u} = \frac{1}{\operatorname{Vol}(M)}\Bigl(\int_M u^{2n/(n-2)} \, dv_h\Bigr)^{(n-2)/(2n)} .
\]
Then
\[
\begin{aligned}
\int_M \bigl(c_n|\nabla_M \bar{u}|^2 + R_h \bar{u}^2\bigr)\, dv_h
   &\leq \int_M \bigl(c_n|\nabla_M u|^2 + R_h u^2\bigr)\, dv_h, \\[4pt]
\int_{S^1}\int_M \bar{u}^{2n/(n-2)} \, dv_h\, dt
   &= \int_{S^1}\int_M u^{2n/(n-2)} \, dv_h\, dt,
\end{aligned}
\]
and hence \(B(\bar{u}) \le B(u)\). This shows that to reduce \(B\), one should average along the \(M\)-direction.

According to \textbf{Theorem \ref{thm_Yamabe}}, when \(T\) is small, \(Y(g(T))\) is attained by a constant function—which trivially does not depend on the \(M\)-direction. As \(T\) increases, the term \(B(u)\) gains weight in the functional \(E_{g(T)}\). Therefore, in order to further minimize \(E_{g(T)}\), one should devote even more effort to making \(B(u)\) small, i.e., to averaging along \(M\).

This observation leads to the following conjecture:

\begin{conjecture}\label{conj}
    Let $(M^{n-1},h)$ be a Yamabe metric and $Y(h)>0$ and normalized to have $R_h\equiv (n-1)(n-2)$, then $Y(g(T))$ is minimized by $u_{n,T}$ from (\ref{Yamabe-ODE}) that only depend on $t$, and in particular,
    \begin{equation}
        \begin{aligned}
        \frac{Y(h+dt^2)}{Y(h_{std}+dt^2)}&=(\frac{Y(h)}{Y(h_{std})})^{(n-1)/n}\\
        \Longrightarrow\frac{\sigma(M^{n-1}\times S^1)}{\sigma(S^n)}&\geq(\frac{Y(h)}{Y(h_{std})})^{(n-1)/n}
        \end{aligned}\label{conj-general}
    \end{equation}
    As a consequence
    \begin{equation}
        \sigma(S^{2}\times T^2)=\sigma(S^4)\label{conj-T^2}
    \end{equation}
\end{conjecture}
If the first part of the conjecture holds, then \(R_h = (n-1)(n-2)\) implies $Y(h) = (n-1)(n-2)\,|M|^{2/(n-1)}$. Moreover,
\begin{equation}
Y(g(T)) = E_{g(T)}(u_{n,T}) 
= |M|^{2/n}\,
   \frac{\displaystyle\int_{S^1(T)}\!\bigl[c_n(\partial_t u_{n,T})^2 + (n-1)(n-2)u_{n,T}^2\bigr]}
        {\left(\displaystyle\int_{S^1(T)} u_{n,T}^{2n/(n-2)}\right)^{(n-2)/n}} .
\tag{1}
\end{equation}
Replacing \(g(T)\) by \(g_0(T)\) and forming the ratio of the two expressions yields (\ref{conj-general}).

For (\ref{conj-T^2}), consider the metric
\begin{equation}
g(T_1,T_2) = u_{4,T_2}^{\,2}(s)
            \Bigl(ds^2 + u_{3,T_1}(t)^{\,4}\bigl(dt^2 + h_{std}\bigr)\Bigr).\label{Yamabe-construction}
\end{equation}
In $u_{n,T}$, $n$ deontes dimension and $T$ is period. \(u_{3,T_1}^{\,4}(dt^2 + h_{std})\) is a Yamabe metric whose Yamabe constant approaches \(Y(S^3,h_{std})\) as \(T_1 \to \infty\). Sending \(T_2 \to \infty\) and applying (\ref{conj-general}) then gives (\ref{conj-T^2}).

This conjecture is also motivated by work on the Yamabe constant (or Yamabe invariant) of product manifolds—a topic that has been extensively studied and will be discussed in the following subsection.

It is also relevant to mention Rosenberg’s \(S^1\)-stability conjecture, which asserts that a closed manifold \(M\) admits a metric of positive scalar curvature (PSC) if and only if \(M\times S^1\) does, or equivalently \(\sigma(M)>0 \iff \sigma(M\times S^1)>0\) \cite{Rosen}. The implication \(\sigma(M)>0 \Rightarrow \sigma(M\times S^1)>0\) is trivial, and (\ref{conj-general}) provides a stronger quantitative statement. An interesting question is whether \(\sigma(M\times S^1)\) gives a universal lower bound for \(\sigma(M)\) in some natural way.

\subsection{Conjecture on Yamabe Invariant of $S^2\times S^2$}

\,

It is well known that the saddle point of the Yamabe invariant \(\sigma(M)\) is closed related to a metric of constant Ricci curvature. Consider \(S^2 \times S^2\) with two round metrics \(h_{\text{std},1}\) and \(h_{\text{std},2}\). The product metric \(h_1 + h_2\) has constant Ricci curvature, yet surprisingly,
\[
Y(h_{\text{std},1} + h_{\text{std},2}) < \sigma(S^2 \times S^2).
\]
Since \(h_{\text{std},1} + h_{\text{std},2}\) is Einstein and distinct from the round metric on \(S^4\), Obata’s theorem \cite{Obata} implies it is the unique CSC metric in its conformal class. Moreover, by the result of \cite{BWZ}, any CSC metric sufficiently close to \(h_{\text{std},1} + h_{\text{std},2}\) is also the only CSC metric within its conformal class, hence a Yamabe minimizer.

Applying this to the family \(h_{\text{std},1} + r^2 h_{\text{std},2}\) with \(r\) near \(1\) (but \(r \neq 1\)), a direct computation gives
\[
Y(h_{\text{std},1} + h_{\text{std},2}) \;<\; Y(h_{\text{std},1} + r^2 h_{\text{std},2}) \;\le\; \sigma(S^2 \times S^2).
\]

The inequality above suggests passing to the limit \( r \to \infty \). According to \cite{AFP}, if \((M_1^{m_1},h_1)\) and \((M_2^{m_2},h_2)\) satisfy \(m_1, m_2 \ge 2\) and \(h_1\) has positive scalar curvature, then
\[
\lim_{r \to \infty} Y(M_1 \times M_2,\, h_1 + r^2 h_2)
   = Y\bigl(M_1 \times \mathbb{R}^{m_2},\, h_1 + g_{\text{Flat}}\bigr),
\]
where the right‑hand side is the infimum taken over compactly supported functions in \(H^1(M_1 \times \mathbb{R}^{m_2})\).

Recall a conjecture from
\begin{conjecture}[K.~Akutagawa]
Let \((M_1^{m_1},h_1)\) be a Yamabe metric of positive scalar curvature and let \(m_2 \ge 2\). Then the infimum defining
\(Y(M_1 \times \mathbb{R}^{m_2},\, h_1 + g_{\text{Flat}})\) is achieved by a radial function on \(\mathbb{R}^{m_2}\).
\end{conjecture}

Assuming this conjecture holds, the result of \cite{AFP} would imply
\[
\sigma(S^2 \times S^2) \;\ge\; Y\bigl(S^2 \times \mathbb{R}^2,\, h_{\text{std},1} + g_{\text{flat}}\bigr)
                     \;\approx\; 59.40
                     \;>\; Y(h_{\text{std},1} + h_{\text{std},2}) = 16\pi .
\]

It is also worth mentioning the following result of \cite{Petean09}.

\begin{theorem}[J.~Petean]
Suppose \((M^{n-1},h)\) satisfies \(\operatorname{Ric}_h \ge (n-2)h\). Then
\begin{equation}
\sigma(M\times \mathbb{R}) \;\ge\; Y(M\times \mathbb{R},\,h+dt^2)
\;\ge\; \left(\frac{\operatorname{Vol}(M,h)}{\operatorname{Vol}(S^{n-1},h_{\text{std}})}\right)^{2/n} \sigma(S^n).
\end{equation}
\end{theorem}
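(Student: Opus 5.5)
The first inequality is immediate from the definition: $\sigma(M\times\mathbb{R})$ is the supremum of the Yamabe constants (infima over compactly supported $H^1$ functions) over all metrics on $M\times\mathbb{R}$, so it dominates the value for the particular metric $h+dt^2$. The plan for the second inequality is a symmetrization argument. I would transplant every test function on $(M\times\mathbb{R},h+dt^2)$ to a test function on the model $(S^{n-1}\times\mathbb{R},h_{std}+dt^2)$ without increasing the Yamabe quotient, up to the factor $a^{2/n}$, where $a=\operatorname{Vol}(M,h)/\operatorname{Vol}(S^{n-1},h_{std})$. I would then use that $Y(S^{n-1}\times\mathbb{R},h_{std}+dt^2)=Y(S^n)=\sigma(S^n)$. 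This last fact holds because $S^{n-1}\times\mathbb{R}$ is conformal to $\mathbb{R}^n\setminus\{0\}$, and removing a point does not change the Yamabe constant since a point has zero capacity, as already used in Section 5.1.

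The key geometric input is an isoperimetric comparison. First, by the Lévy--Gromov inequality, $\operatorname{Ric}_h\ge (n-2)h$ on $M^{n-1}$ implies that the normalized isoperimetric profile of $M$ dominates that of the round $S^{n-1}$. Precisely, $I_M(aV)\ge a\, I_{S^{n-1}}(V)$ for $0<V<\operatorname{Vol}(S^{n-1})$. Second, I would pass to the products with $\mathbb{R}$. Here I would invoke Ros' product theorem (or the Gromov/Bérard--Besson--Gallot localization argument): if the profile of one factor dominates the rescaled profile of a model, the same holds for the products with a common factor. This gives
\[
I_{M\times\mathbb{R}}(aV)\ \ge\ a\, I_{S^{n-1}\times\mathbb{R}}(V).
\]

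Given a nonnegative compactly supported $u$ on $M\times\mathbb{R}$, I would define $u^*$ on $S^{n-1}\times\mathbb{R}$ as the function whose superlevel sets are isoperimetric regions of the model with
\[
|\{u^*>s\}|=a^{-1}|\{u>s\}|.
\]
Equimeasurability gives $\int u^p=a\int (u^*)^p$ for every $p$, in particular for $p=2$ and $p=\tfrac{2n}{n-2}$. The coarea formula together with the profile inequality yields the Pólya--Szegő-type bound $\int|\nabla u|^2\ge a\int|\nabla u^*|^2$. Tracing scalar curvature gives $R_h\ge (n-1)(n-2)=R_{h_{std}}$, so the potential term also compares. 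Altogether
\[
E(u)\ \ge\ \frac{a\bigl(\int c_n|\nabla u^*|^2+R_{std}(u^*)^2\bigr)}{a^{(n-2)/n}\bigl(\int (u^*)^{2n/(n-2)}\bigr)^{(n-2)/n}}\ =\ a^{2/n}E_{std}(u^*)\ \ge\ a^{2/n}\,\sigma(S^n).
\]
Taking the infimum over $u$ gives the claim. Replacing $u$ by $|u|$ handles sign-changing test functions, and a density argument takes care of the regularity of $u^*$.

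The main obstacle is the product isoperimetric comparison. Lévy--Gromov applies only to $M$ itself, and it is not automatic that a one-factor domination propagates to $M\times\mathbb{R}$, because isoperimetric regions of a product need not be products. I would first try Ros' argument, which shows that the model profile of $S^{n-1}\times\mathbb{R}$ is realized by symmetric regions and that the comparison is stable under products with $\mathbb{R}$. If that fails, I would instead symmetrize only in the $M$ direction, fiberwise in $t$. This uses Lévy--Gromov on each slice $M\times\{t\}$, and it requires checking that the $\partial_t$ part of the gradient does not increase under this slice-wise rearrangement. That check is a standard but delicate Steiner-type estimate, and it is where I would expect most of the technical work to lie.
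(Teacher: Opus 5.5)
\textbf{There is a genuine gap in your primary argument, and it is not the one you flagged.} The product comparison $I_{M\times\mathbb{R}}(aV)\ge a\,I_{S^{n-1}\times\mathbb{R}}(V)$ is in fact true. Slice by slice, $\operatorname{Per}(\Omega)\ge\int_{\mathbb{R}}\sqrt{I_M(v(t))^2+v'(t)^2}\,dt$, where $v(t)$ is the volume of $\Omega\cap(M\times\{t\})$, and L\'evy--Gromov then applies. But it is the wrong tool, because $S^{n-1}\times\mathbb{R}$ is not a valid target for a full rearrangement.

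First, its isoperimetric regions are ball-like for small volume and slabs $S^{n-1}\times[t_1,t_2]$ for large volume. They are not nested, so your $u^*$ is not defined.

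More decisively, coarea, Cauchy--Schwarz and the profile bound only give $E(u)\ge a^{2/n}J(\mu/a)$. Here $\mu(s)=|\{u>s\}|$, and $J$ is the one-dimensional quotient in which the perimeter of each level set is replaced by $I_{S^{n-1}\times\mathbb{R}}$ of its volume. One has $\inf J<\sigma(S^n)$. To see this, evaluate $J$ on the distribution function of $u_{n,\infty}=(\cosh t)^{-(n-2)/2}$. Its superlevel sets are slabs $S^{n-1}\times(-b,b)$ of perimeter $2|S^{n-1}|$. For small $b$, a geodesic ball of the same volume has far smaller perimeter, so $J<E_{std}(u_{n,\infty})=\sigma(S^n)$. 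Hence no argument routed through the isoperimetric profile of the cylinder can reach the sharp constant. Had your $u^*$ existed, the case $M=S^{n-1}$, $u=u_{n,\infty}$ would produce a test function with quotient strictly below $Y(S^{n-1}\times\mathbb{R})$, which is absurd.

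\textbf{Your fallback, by contrast, closes the proof.} The step you expected to be delicate is easy, because the product metric splits $|\nabla u|^2=|\nabla_M u|^2+|\partial_t u|^2$. Set $u^*(x,t)=u(\cdot,t)^\sharp(aV(x))$, where $V(x)=|B(p,d(p,x))|$ in $S^{n-1}$ and $f^\sharp$ is the decreasing rearrangement on $[0,|M|]$. Geodesic balls about $p$ are nested, so this is well defined.
\begin{itemize}
\item For the $M$-part, slice-wise coarea plus L\'evy--Gromov gives $\int_M|\nabla_M u|^2\ge a\int_{S^{n-1}}|\nabla_S u^*|^2$ for every $t$. Equality holds on the model side because the level sets of $u^*$ are geodesic spheres on which $|\nabla u^*|$ is constant.
\item For the $t$-part, Hardy--Littlewood gives $a\int_{S^{n-1}}|f^*-g^*|^2=\int_0^{|M|}|f^\sharp-g^\sharp|^2\le\int_M|f-g|^2$. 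Apply this to $f=u(\cdot,t+\tau)$ and $g=u(\cdot,t)$, divide by $\tau^2$, and let $\tau\to0$ to get $\int_M|\partial_t u|^2\ge a\int_{S^{n-1}}|\partial_t u^*|^2$.
\end{itemize}
Combining these with slice-wise equimeasurability and $R_h\ge(n-1)(n-2)$ yields $E(u)\ge a^{2/n}E_{std}(u^*)\ge a^{2/n}Y(S^{n-1}\times\mathbb{R})=a^{2/n}\sigma(S^n)$. Here $u^*$ is an honest compactly supported Lipschitz function, so the sharp constant enters through $Y(S^{n-1}\times\mathbb{R})$ rather than through an isoperimetric profile. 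You should promote this to your main argument.

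\textbf{Comparison with the paper.} The paper gives no proof and only cites Petean, whose route is different. By conformal invariance one passes to a cone over $M$, e.g.\ $dr^2+r^2h=e^{2t}(dt^2+h)$, whose scalar curvature $(R_h-(n-1)(n-2))/r^2\ge0$ can be dropped. One then shows that balls about the vertex are isoperimetric when $\operatorname{Ric}_h\ge(n-2)h$, and Schwarz-symmetrizes onto $\mathbb{R}^n$. That route yields an isoperimetric theorem for cones as a by-product. The fiberwise route needs only L\'evy--Gromov on $M$, and works equally with $\mathbb{R}$ replaced by any second factor.
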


This theorem provides a partial affirmative answer to \textbf{Conjecture \ref{conj}}.

If (\ref{conj-T^2}) holds, one might further conjecture that \(\sigma(S^2 \times S^2) = \sigma(S^4)\). Intuitively, the sphere \(S^2\) is more "positively curved" than the torus \(T^2\), so we would expect \(\sigma(S^2 \times S^2) \geq \sigma(S^2 \times T^2)\).

This intuition is supported by the result of \cite{Le99}, which states that for a closed Kähler surface \(M\),
\[
\begin{cases}
\sigma(M) > 0 & \Longleftrightarrow \operatorname{Kod}(M) = -\infty,\\[4pt]
\sigma(M) = 0 & \Longleftrightarrow \operatorname{Kod}(M) = 0,\,1,\\[4pt]
\sigma(M) < 0 & \Longleftrightarrow \operatorname{Kod}(M) = 2,
\end{cases}
\]
where \(\operatorname{Kod}(M)\) denotes the Kodaira dimension. Since the Kodaira dimension is additive,
\[
\operatorname{Kod}(X \times Y) = \operatorname{Kod}(X) + \operatorname{Kod}(Y),
\]
and because \(\operatorname{Kod}(S^2) = -\infty\) while \(\operatorname{Kod}(T^2) = 0\), it is natural to expect \(\sigma(M^2 \times S^2) \geq \sigma(M^2 \times T^2)\).

The guess that \(\sigma(S^2\times S^2) = \sigma(S^4)\) is also supported by the construction (\ref{Yamabe-construction}). Restrict $g(T_1,T_2)$ to the 2‑torus:
\begin{equation}
\Bigl(S^1(T_1)\times S^1(T_2),\;
\bar{g}(T_1,T_2)=u_{4,T_2}^{\,2}(s)
\bigl(ds^2+u_{3,T_1}^{\,4}(t)dt^2\bigr)\Bigr).
\label{Yamabe-construction-2}
\end{equation}

Let \(C(s_0)=\{(s_0,t)\mid t\in S^1(T_1)\}\) be the circle at \(s=s_0\). Because \(u_{n,T}\to u_{n,\infty}=(\cosh t)^{-(n-2)/2}\) as \(T\to\infty\), the length of \(C(s)\) satisfies
\[
\frac{1}{C}\,u_{4,T_2}(s)\;\le\;
L\bigl(C(s)\bigr)=u_{4,T_2}(s)\int_{S^1(T_1)}u_{3,T_1}^2(t)\,dt
\;\le\; C\,u_{4,T_2}(s)
\]
for some constant \(C>0\) independent of \(T_2\).

Moreover, \(\displaystyle\lim_{T_2\to\infty}\inf_{s} u_{4,T_2}(s)=0\); hence the metric \(\bar{g}(T_1,T_2)\) describes a torus that develops an arbitrarily thin neck as \(T_2\to\infty\). By cutting along this shrinking neck and gluing two spherical caps, holefully one obtains a metric \(\tilde{g}(T_1,T_2)\) on \(S^2\times S^2\) such that
\[
Y\bigl(\tilde{g}(T_1,T_2)\bigr)\;\xrightarrow[T_2\to\infty]{}\;Y(S^4).
\]


\begin{thebibliography}{999}     


\bibitem{AFP}
Akutagawa K, Florit L A, Petean J.
\newblock On Yamabe constants of Riemannian products.
\newblock {\em Communications in analysis and geometry}, 15(5): 947-969, 2007.

\bibitem{akutagawa2021yamabe}
Akutagawa, Kazuo.
\newblock The Yamabe invariant.
\newblock {\em Sugaku Expositions}, 34(1):1-34, June 2021.
\newblock Article electronically published on April 28, 2021.
\newblock doi:10.1090/suga/456.

\bibitem{Aubin}
Aubin T.
\newblock Équations différentielles non linéaires et problème de Yamabe concernant la courbure scalaire.
\newblock {\em J. Math. Pures Appl.(9)}, 55: 269-296, 1976.

\bibitem{BF}Bidaut-Véron M F, Véron L. Nonlinear elliptic equations on compact Riemannian manifolds and asymptotics of Emden equations[J]. Inventiones mathematicae, 1991, 106(1): 489-539.

\bibitem{BE}
Brézis, Haïm, and Elliott Lieb.
\newblock A relation between pointwise convergence of functions and convergence of functionals.
\newblock {\em Proceedings of the American Mathematical Society}, 88(3): 486-490, 1983.

\bibitem{BWZ}
Bohm, Christoph, McKenzie Wang, and Wolfgang Ziller.
\newblock A variational approach for compact homogeneous Einstein manifolds.
\newblock {\em Geometric and Functional Analysis}, 14(4): 681-733, 2004.

\bibitem{CGS}
Caffarelli, Luis et al.
\newblock Asymptotic symmetry and local behavior of semilinear elliptic equations with critical sobolev growth.
\newblock {\em Communications on Pure and Applied Mathematics}, 42: 271-297, 1989.

\bibitem{CWW}
Chen, X., Wei, W., \& Wu, N.
\newblock Almost sharp Sobolev trace inequalities in the unit ball under constraints.
\newblock {\em Advances in Mathematics}, 459: 110023, 2024.

\bibitem{CLW}
Chen, X.; Lai, M.; Wang, F.
\newblock Escobar-Yamabe compactifications for Poincaré-Einstein manifolds and rigidity theorems.
\newblock {\em Advances in Mathematics}, 343: 16-35, 2019.

\bibitem{Evans}
Evans, L. C.
\newblock Partial differential equations.
\newblock American Mathematical Society, 2022.

\bibitem{Fe}
Federer, H.
\newblock Geometric Measure Theory.
\newblock Grundlehren Math. Wiss., vol. 153, Springer, New York, 1969, xiv+676 pp.

\bibitem{GHL}Gursky M J, Hang F, Lin Y J. Riemannian manifolds with positive Yamabe invariant and Paneitz operator[J]. International Mathematics Research Notices, 2016, 2016(5): 1348-1367.

\bibitem{HWY09}
Hang, F., Wang, X., Yan, X.
\newblock An integral equation in conformal geometry.
\newblock {\em Annales de l'Institut Henri Poincaré C, Analyse non linéaire}, 26(1): 1-21, 2009.

\bibitem{HWY08}
Hang, F., Wang, X., Yan, X.
\newblock Sharp integral inequalities for harmonic functions.
\newblock {\em Communications on Pure and Applied Mathematics}, 61(1): 54-95, 2008.

\bibitem{HY}Hang F, Yang P C. Lectures on the fourth-order Q curvature equation[J]. Lect. Notes Ser. Inst. Math. Sci. Natl. Univ. Singap, 2016, 31: 1-33.

\bibitem{Ko85-1}
Kobayashi, Osamu.
\newblock On large scalar curvature.
\newblock Research Report 11, Dept. Math., Keio Univ., 1985.

\bibitem{Ko85-2}
Kobayashi, Osamu.
\newblock On a conformally invariant functional of the space of Riemannian metrics.
\newblock {\em Journal of the Mathematical Society of Japan}, 37(3): 373--389, 1985.
\newblock DOI: 10.2969/jmsj/03730373.
\newblock MR: 792982.

\bibitem{Le99}
Lebrun, C.
\newblock Kodaira dimension and the Yamabe problem.
\newblock {\em Communications in Analysis and Geometry}, 7: 133-156, 1999.

\bibitem{Obata}
Obata, Morio.
\newblock The conjectures on conformal transformations of Riemannian manifolds.
\newblock {\em Journal of Differential Geometry}, 6: 247-258, 1971.

\bibitem{Petean09}
Petean, Jimmy.
\newblock Isoperimetric regions in spherical cones and Yamabe constants of M× S.
\newblock {\em Geometriae Dedicata}, 143(1): 37-48, 2009.

\bibitem{Rosen}
Rosenberg, J.
\newblock Manifolds of positive scalar curvature: a progress report.
\newblock {\em Surveys in Differential Geometry}, 11(1): 259-294, 2006.


\bibitem{Schoen}
Schoen, R.
\newblock Conformal deformation of a Riemannian metric to constant scalar curvature.
\newblock {\em Journal of Differential Geometry}, 20(2): 479-495, 1984.

\bibitem{Schoen2}
Schoen, Richard M.
\newblock Variational theory for the total scalar curvature functional for Riemannian metrics and related topics.
\newblock (1989).

\bibitem{SM}
Struwe, Michael, and M. Struwe.
\newblock Variational methods.
\newblock Vol. 991, Springer-Verlag, Berlin, 2000.

\bibitem{Trudinger}
Trudinger, N. S.
\newblock Remarks concerning the conformal deformation of Riemannian structures on compact manifolds.
\newblock {\em Annali della Scuola Normale Superiore di Pisa-Scienze Fisiche e Matematiche}, 22(2): 265-274, 1968.

\bibitem{Yamabe}
Yamabe, H.
\newblock On a deformation of Riemannian structures on compact manifolds.
\newblock 1960.

\end{thebibliography}
\end{document}